\newtheorem{theorem}{Theorem}
\newtheorem{lemma}{Lemma}
\newtheorem{corollary}{Corollary}
\newcommand{\sgn}{\mathrm{sign}}
\begin{document}


\title{Oracle Inequalities for High-dimensional Panel Data Models}

 \author{Anders Bredahl Kock\\ Aarhus University and CREATES}
 \thanks{I am grateful to Mehmet Caner and Peter Phillips for urging me to pursue the ideas of this paper. I would also like to thank seminar participants at PUC-Rio and participants at the European Meeting of Statisticians 2013 in Budapest for helpful comments and suggestions. Financial support from the Danish National Research Foundation (DNRF78) is gratefully acknowledged.\\
e-mail: \texttt{akock@creates.au.dk}. Address: Aarhus University and CREATES, Fuglesangs Alle 4, 8210 Aarhus V}

\maketitle

\begin{abstract}
This paper is concerned with high-dimensional panel data models where the number of regressors can be much larger than the sample size. Under the assumption that the true parameter vector is sparse we propose a panel-Lasso estimator and establish finite sample upper bounds on its estimation error under two different sets of conditions on the covariates as well as the error terms. In particular, we allow for heteroscedastic and non-gaussian error terms which are weakly dependent over time. Upper bounds on the estimation error of the unobserved heterogeneity are also provided under the assumption of sparsity. Next, we show that our upper bounds are essentially optimal in the sense that they can only be improved by multiplicative constants. These results are then used to show that the Lasso can be consistent in even very large models where the number of regressors increases at an exponential rate in the sample size. Conditions under which the Lasso does not discard any relevant variables asymptotically are also provided.

In the second part of the paper we give lower bounds on the probability with which the adaptive Lasso selects the correct sparsity pattern in finite samples. These results are then used to give conditions under which the adaptive Lasso can detect the correct sparsity pattern asymptotically. We illustrate our finite sample results by simulations and apply the methods to search for covariates explaining growth in the G8 countries.

\vspace{.4cm}

\textit{Keywords:}
Panel data,
Lasso
Adaptive Lasso
Oracle Inequality
Non-asymptotic bounds
High-dimensional models
Sparse models
Consistency
Variable selection
Asymptotic sign consistency.
\end{abstract}

\section{Introduction}


When building a model one of the first decisions one has to make is which, of potentially many,  variables are to be included in the model and which are to be left out. Often this decision is made based on a particular theory but different theories might suggest different explanatory variables and this leaves the researcher with a large set of potential variables. In fact, one may often have access to many more variables than observations rendering standard techniques inapplicable. Since this kind of high-dimensional data is becoming increasingly available, the last 10-15 years have witnessed a great deal of research into procedures that can handle such data sets. In particular, a lot of attention has been given to penalized estimators. The Lasso of \cite{tibshirani96} is the most prominent of these procedures and a lot of subsequent research has focussed on investigating the theoretical properties of the Lasso, see \cite{zhaoy06}, \cite{meinshausenb06}, \cite{bickelrt09}, \cite{bellonic11} and \cite{buhlmannvdg11} to mention just a few. The Lasso and related procedures have become popular since they are computationally feasible and perform variable selection and parameter estimation simultaneously. For recent reviews we refer to \cite{buhlmannvdg11}, \cite{bellonic11} and \cite{fan11}.

Lasso-type estimators have been used by \cite{BelloniCH13} to show that uniform inference on treatment effects is possible even after selecting among a large set of control variables. This is valid even if the controls have been imperfectly selected and in the presence of heteroscedastic and non-gaussian error terms. \cite{caner2009} used the Lasso in the context of GMM and has extended his procedure in \cite{canerz2014} to also allow for an increasing number of parameters.

Most focus in the literature has been on cross sectional data. However, objects (such as individuals, firms or countries) are often sampled repeatedly over time resulting in a panel data set. Panel data sets are attractive since they have several advantages compared to cross sectional or time series data. Firstly, they may decrease the collinearity between the covariates compared to pure time series models. Secondly, and perhaps more importantly, \cite{hsiao03} gives examples of statistical questions which can not be answered in the standard cross sectional model -- variation over time is needed. Finally, in empirical research, one often hears that a certain effect is found (or not found) due omitted variables that are correlated with the error term. Panel data allows us to control for omitted variables that can be hard to measure or get data on (such as the intelligence or quality of a worker). However, since panel data sets may often contain many variables it is important to have procedures that can deal with them in a theoretically sound and computationally feasible manner. We contribute by being the first to propose a panel-Lasso estimator and by establishing oracle inequalities for it and the adaptive Lasso in the high-dimensional linear fixed effects panel data model
\begin{align}
y_{i,t}=x_{i,t}'\beta^* +c_i^*+\epsilon_{i,t},\ i=1,...,N,\ t=1,...,T\label{model1} 
\end{align}
where $x_{i,t}$ is a $p_{N,T}\times 1$ vector of covariates and where $p_{N,T}$ is indexed by $N$ and $T$ to indicate that the number of covariates can increase in the sample size. In the sequel we shall omit this indexation. The $c_i^*$s are the unobserved time homogeneous fixed effects (such as intelligence, ability, motivation or perseverance of a person) while the $\epsilon_{i,t}$ are the error terms about which we shall be more specific later. In contrast to the standard linear regression model the fixed effect panel data model has two potential sources of high-dimensionality. First, even though theory may guide the researcher towards a set of potential explanatory variables to be included in $x_{i,t}$, large data sets are becoming increasingly available nowadays and one may not want to take a strong stand a priori on which variables to include in the model and which to leave out. This implies that $x_{i,t}$ can be a very long vector -- potentially much longer than the sample size. On the other hand, only a few variables in $x_{i,t}$ might be relevant for explaining $y_{i,t}$ meaning that the vector $\beta^*$ is sparse. 

The second source of high-dimensionality is the vector $c^*=(c_1^*,...,c_N^*)$. Note that this always has as many entries as there are cross sectional observations in the data set ($N$). Often the unobserved heterogeneity $c_{i}$ is simply removed by a differencing or demeaning procedure. In the context of a high-dimensional panel data model this was the route chosen by \cite{kock13}. However, we show that the unobserved heterogeneities can be estimated and provide finite-sample upper bounds on the estimation error. Just like $\beta^*$, $c^*$ might be a sparse vector. A first example of this could be the intelligence or ability of a worker having an effect on income only for certain individuals when modeling the deviation of their income from the national mean income. More precisely, one might conjecture that for a large group of workers of "average" ability their ability does not induce any fluctuations away from the mean income. This big group of workers would have $c_i^*=0$. On the other hand, persons with above average ability may also have incomes that are above the average corresponding to $c_i^*>0$. A similar argument would imply that workers with abilities far below the average would have $c_i<0$.

A second example could be the effect of culture when modeling the growth of developing countries. It is our goal to investigate the properties of the panel-Lasso for fixed effects panel data models. We shall see that the panel-Lasso can estimate the two parameter vectors almost as precisely as if the true sparsity pattern had been known and only the relevant variables had been included from the outset. For the adaptive Lasso we show that it selects the correct sparsity pattern with high probability. In particular, we 

\begin{enumerate}
	\item are the first to provide \textit{nonasymptotic} oracle inequalities for the estimation error of the proposed panel-Lasso for $\beta^*$ and $c^*$ under different sets of moment/tail assumptions on the covariates and the error terms which allow for heteroscedasticity and non-gaussianity. More precisely, for a given sample size we provide upper bounds on the estimation error which hold with at least a certain probability. These upper bounds are of the same order as if an oracle had revealed the true model prior to estimation and one had only included the relevant variables from the outset. Finally, the bounds are uniform over certain subsets of the parameter space.
	\item In the first of our settings we allow for much heavier tails than the usual sub-gaussian ones. We also stress that due to the presence of two parameter vectors we develop a new trick to obtain the optimal oracle inequalities. Applying conventional techniques would lead to sub-optimal bounds. 
	\item show that our bounds are optimal in the sense that they can at most be improved by a multiplicative constant. 
	\item use the nonasymptotic bounds to give a set of sufficient conditions under which the Lasso estimates $\beta^*$ and $c^*$ consistently. It turns out that the Lasso can be consistent in even very high-dimensional models. We also provide conditions under which the Lasso does not discard any relevant variables, i.e. conditions under which it can be used as a strong initial screening device removing irrelevant variables and thus reducing the dimension of the model.  
	\item establish nonasymptotic lower bounds on the probability with which the adaptive Lasso unveils the correct sparsity pattern.
	\item use the nonasymptotic bounds to give conditions under which the adaptive Lasso detects the correct sparsity pattern asymptotically.
	\item propose an efficient algorithm to implement the Lasso and the adaptive Lasso in panel data models which reduces the estimation problem to a standard Lasso one.
	\item introduce a new restricted eigenvalue condition similar in spirit to \cite{bickelrt09} and show how this can be valid even for data with non-gaussian, non-independent rows, hence extending the work of \cite{raskuttiwy10} and \cite{vershynin11} and the state of the art in \cite{rudelsonz2013}. We stress again that the proof of our Theorem \ref{thm1} is also different than the one for the Lasso in the plain cross sectional model due to the presence of two parameter vectors which have to be treated separately in order to obtain tight finite sample upper bounds on the estimation error of both parameter vectors.   
	\item illustrate the methods by means of simulations and a real data example. In particular, the validity of the neo-clasical growth hypothesis is investigated by means of a large panel data set.
\end{enumerate}
We believe that these results will be very useful for applied researchers since they provide tools with which very large panel data sets can be handled in a theoretically sound an computationally feasible manner without reducing the dimension of the model in an ad hoc way prior to estimation. To be precise, we show that one can simply include all relevant variables and still estimate the coefficients as precisely as if only the relevant variables had been included from the outset.

The rest of the paper is organized as follows: Section \ref{setup} introduces relevant notation and the panel Lasso. Section \ref{Lasso} provides a range of non-asymptotic oracle inequalities for the Lasso while Section \ref{asymLasso} uses these inequalities to give asymptotic results for it. Next, Section \ref{ALasso} is concerned with finite sample probabilities of the adaptive Lasso selecting the correct sparsity pattern. It also gives sufficient conditions for when this probability tends to one asymptotically. Section \ref{MC} provides a simulation study while Section \ref{Emp} contains an application to growth in the G8 countries. Finally, Section \ref{concl} concludes while all proofs are deferred to the appendix. 
  
\section{Setup and notation}\label{setup}
Let $J_1=\cbr[0]{j: \beta^*_j\neq 0}\subseteq\cbr[0]{1,...,p}$ and $J_2=\cbr[0]{i: c^*_i\neq 0}\subseteq\cbr[0]{1,...,N}$ be the sets of active covariates and unobserved heterogeneities, respectively. $\beta_{\min}=\min\cbr[0]{|\beta_j^*|:j\in J_1}$ and $c_{\min}=\min\cbr[0]{|c_j^*|:j\in J_2}$ are the smallest nonzero entries of $\beta^*$  and $c^*$, respectively. Denote by $\gamma^*=({\beta^*}',{c^*}')'$ and $J=J_1\cup J_2\subseteq\cbr[0]{1,...,N+p}$\footnote{Here $J_1\cup J_2$ is understood as $J_1\cup (J_2+p)$ where $J_2+p=\cbr{s=r+p: r\in J_2}$ such that $J_1\cup J_2\subseteq\cbr[0]{1,...,p+N}$. $J$ shall be used to index $p+N\times 1$ vectors.}. For any set $A$, $|A|$ denotes its cardinality while $A^c$ denotes its complement. In particular, $|J_1|=s_1,\ |J_2|=s_2$ and $|J|=s$.

For any $x\in \mathbb{R}^n$, $\enVert[0]{x}=\sqrt{\sum_{i=1}^nx_i^2}$, $\enVert[0]{x}_{\ell_1}=\sum_{i=1}^n |x_i|$ and $\enVert[0]{x}_{\ell_\infty} =\max_{1\leq i\leq n} |x_i|$ denote $\ell_2$-, $ \ell_1$- and $\ell_{\infty}$-norms, respectively. A couple of times we shall also make use of the $\ell_0$-norm $\enVert{x}_{\ell_0}=\sum_{i=1}^n\mathbf{1}_{\cbr[0]{x_i\neq 0}}$ which is simply the number of non-zero entries of $x$ For a random variable $U$, $\enVert[0]{U}_{L_r}=(E|U|^r)^{1/r}$ denotes its $L_r$-norm and for a symmetric square matrix $M$, $\phi_{\min}(M)$ and $\phi_{\max}(M)$ denote the minimal and maximal eigenvalues of $M$.

For any vector $x\in\mathbb{R}^n$ and subset $A$ of $\cbr[0]{1,...,n}$, $x_J$ denotes the vector in $\mathbb{R}^{|J|}$ only consisting of the elements indexed by $A$.  For a matrix $R$, $R_A$ denotes the submatrix only containing the columns indexed by $A$ while $R_{A,B}$ denotes the submatrix with rows indexed by $A$ and columns indexed by $B$.
Next, for any two real numbers $a$ and $b$, $a\wedge b=\min(a,b)$ and $a\vee b=\max(a,b)$. For any $x\in\mathbb{R}^n$, $\sgn(x)$ denotes the sign function applied to each component of $x$.

Since our primary focus is high-dimensional models we shall sometimes tacitly assume that $p,N\geq e$ for the sole reason of keeping the presentation simple\footnote{Here $e$ denotes Euler's constant.}.

Define $X_i=(x_{i,1},...,x_{i,T})'$ and $X=(X_1',...,X_N')'$. Letting $\iota$ denote the $T\times 1$ vector of ones, set $D=I_N\otimes \iota$ (where $\otimes$ denotes the Kronecker product) and define the $NT\times (p+k)$ matrix $Z=(X,D)$. We shall refer to the $j$th column of $X$ by $x_j,\ j=1,...,p$ and to the $i$th column of $D$ by $d_i,\ i=1,...,N$. Defining $y_i=(y_{i,1},...,y_{i,T})'$ and $\epsilon_i=({\epsilon_{i,1},...,\epsilon_{i,T}})'$ for $i=1,...,N$ and setting $y=(y_1',...,y_N')'$ as well as $\epsilon=(\epsilon_1',...,\epsilon_N')'$  one may equivalently write (\ref{model1}) as
\begin{align*}
y=Z\gamma^*+\epsilon.
\end{align*}
The properly scaled Gram matrix of $Z$ will turn out to play an important role in the sequel.

\subsection{The panel Lasso}
The panel Lasso estimates $\gamma^*=({\beta^*}',{c^*}')'$ by minimizing the following objective function
\begin{align}
L(\beta,c)
&=
\sum_{i=1}^N\sum_{t=1}^T\del[1]{y_{i,t}-x_{i,t}'\beta-c_i}^2+2\lambda_{N,T}\sum_{k=1}^p\envert[0]{\beta_k}+2\mu_{N,T}\sum_{i=1}^N\envert[0]{c_i}\\
&=
\enVert{y-Z\gamma}^2+2\lambda_{N,T}\enVert{\beta}_{\ell_1}+2\mu_{N,T}\enVert{c}_{\ell_1}.\label{Lassoobj}
\end{align}
The Lasso estimator, denoted $\hat{\gamma}=(\hat{\beta}', \hat{c}')'$, is the solution of a minimization problem which is the sum of the usual least squares objective function plus two terms that penalize $\beta_k$ and $c_i$ for being different from 0. The size of the penalty is determined by the terms $\lambda_{N,T}$ and $\mu_{N,T}$. The larger these are, the more will the entries of $\hat{\beta}$ and $\hat{c}$ be shrunk towards zero. As will be seen later, two different regularization sequences ($\lambda_{N,T}$ and $\mu_{N,T}$) are needed to establish desirable properties of $\hat{\gamma}=(\hat{\beta}', \hat{c}')'$. This is due to the fact that common covariates and the fixed effects are of different orders of magnitude. This is in contrast to standard cross sectional models where all covariates are usually assumed to be of the same order of magnitude. Hence, we contribute by providing a technique which allows one to establish separate oracle inequalities for subvectors of different orders of magnitude.  On an intuitive level this technique is necessary due to the fact that the number of effective observations for each $\beta_k,\ k=1,...,p$ is $NT$ while it only is $T$ for each $c_i\ i=1,...,N$.  

Also note that if one knows a priori that certain variables are relevant one can choose not to penalize these which results in these variables being retained in the model. As an extreme case of this one might work on a problem where one does not believe that the fixed effects are sparse. In that case, if one is not interested in estimating the fixed effects, we refer to the methodology of \cite{kock13} who takes a classical approach and simply differences out the fixed effects without estimating them. We shall proceed by assuming that $\beta^*$ and $c^*$ are sparse but keep in mind that one can choose not to penalize certain parameters if these are known to be non-zero. 

\subsection{The panel restricted eigenvalue condition}
Since we are primarily interested high-dimensional models the properly scaled Gram matrix of $Z$ will often be ill-behaved or even singular. However, \cite{bickelrt09} observed for the standard linear regression model that the Lasso does not need the smallest eigenvalue of the scaled Grammian of $Z$ to be strictly positive in order to derive useful upper bounds on the estimation error.  In particular, it suffices that a so-called \textit{restricted eigenvalue} is bounded away from 0. We shall see next that a similar, though slightly more involved, observation can be made for the panel Lasso.

Let $S=\bigl(\begin{smallmatrix}\sqrt{NT}\mathbf{I}_{p}&0\\ 0&\sqrt{T}\mathbf{I}_{N}\end{smallmatrix} \bigr)$ and set $\psi_{N,T}=S^{-1}Z'ZS^{-1}$. If $p+N>NT$ it is well known that 
\begin{align*}
\min_{\delta\in\mathbb{R}^{p+N}\setminus \cbr[0]{0}}\frac{\delta'\Psi_{N,T}\delta}{\enVert[0]{\delta}^2}
=
\min_{\delta\in\mathbb{R}^{p+N}\setminus \cbr[0]{0}}\frac{\enVert[0]{ZS^{-1}\delta}^2}{\enVert[0]{\delta}^2}
=0.
\end{align*}
In this case ordinary least squares is infeasible. However, for the Lasso it turns out that we do not need to minimize the above Rayleigh-Ritz ratio over all of $\mathbb{R}^{p+N}$ -- it suffices to minimize over a subset implying that the minimum can be non-zero even when $\Psi_{N,T}$ is not of full rank. More precisely, letting $\delta^1$ be $p\times 1$ and $\delta^2$ be $N\times 1$ with $\delta=({\delta^1}',{\delta^2}')'$ and $R_1\subseteq \cbr[0]{1,...,p}$ as well as $R_2\subseteq \cbr[0]{1,...,N}$ we define the $RE(r_1,r_2)$ panel restricted eigenvalue as
\begin{align}
\kappa^2_{\psi_{N,T}}(r_1,r_2)
=
\min\Bigg\{\frac{\enVert{ZS^{-1}\delta}^2}{\enVert[0]{\delta}^2}: \delta \in \mathbb{R}^{p+N}\setminus\cbr[0]{0},\ |R_1|\leq r_1,\ |R_2|\leq r_2,\notag\\  
\frac{\lambda_{N,T}}{\sqrt{NT}}\enVert[1]{\delta_{R_1^c}^1}_{\ell_1}+\frac{\mu_{N,T}}{\sqrt{T}}\enVert[1]{\delta_{R_2^c}^2}_{\ell_1}\leq 3\frac{\lambda_{N,T}}{\sqrt{NT}}\enVert[1]{\delta_{R_1}^1}_{\ell_1}+3\frac{\mu_{N,T}}{\sqrt{T}}\enVert[1]{\delta_{R_2}^2}_{\ell_1}\Bigg\}>0\label{RE}.
\end{align}
The panel restricted eigenvalue condition looks similar to the one introduced in \cite{bickelrt09}. It extends it in that it allows for different penalty sequences for the two groups of parameters. Similarly, for 
\begin{align*}
\Gamma=\del[3]{\begin{matrix} E\del{\frac{X'X}{NT}} &0\\ 0& \mathbf{I}_{N}\end{matrix} }
\end{align*}
define 
\begin{align*}
\kappa^2(r_1,r_2)
=
\min\Bigg\{\frac{\delta'\Gamma\delta}{\enVert[0]{\delta}^2}: \delta \in \mathbb{R}^{p+N}\setminus\cbr[0]{0},\ |R_1|\leq r_1,\ |R_2|\leq r_2,\\ \frac{\lambda_{N,T}}{\sqrt{NT}}\enVert[1]{\delta_{R_1^c}^1}_{\ell_1}+\frac{\mu_{N,T}}{\sqrt{T}}\enVert[1]{\delta_{R_2^c}^2}_{\ell_1}\leq 3\frac{\lambda_{N,T}}{\sqrt{NT}}\enVert[1]{\delta_{R_1}^1}_{\ell_1}+3\frac{\mu_{N,T}}{\sqrt{T}}\enVert[1]{\delta_{R_2}^2}_{\ell_1}\Bigg\}.
\end{align*}
Note that for $\kappa^2>0$ it suffices that $\Gamma$ is of full rank which is a rather standard assumption and independent of whether $p+N<NT$ or not. I turns out that in order to get tight upper bounds on the estimation error of the Lasso $\kappa^2_{\Psi_{N,T}}$ should be as large as possible. In Lemma \ref{vdGB} in the appendix we show that  $\kappa^2_{\Psi_{N,T}}$ is close to $\kappa^2$ if $\Psi_{N,T}$ is close to $\Gamma$. Hence, it suffices that $\kappa^2$ is bounded away from zero and that $\Psi_{N,T}$ is close to $\Gamma$ in order to bound $\kappa^2_{\Psi_{N,T}}$ away from 0 with high probability. In Lemmas \ref{BboundLp} and \ref{Bboundexp} in the Appendix lower bounds on the probability with which $\kappa^2_{\Psi_{N,T}}>\kappa^2/2$ are provided using this idea for heavy- and light-tailedness assumptions on the covariates and the error terms. While the results for light-tailed (sub-gaussian) variables in Lemma \ref{Bboundexp} are to be expected in the light of previous results in the literature (see e.g. \cite{vershynin11}) the results on more heavy-tailed random variables in Lemma \ref{BboundLp} are to our knowledge new. What is new in both cases is that we are not dealing with exclusively independent variables.

\section{Results for the Lasso}\label{Lasso}
 
Before stating our first result we introduce the following two sets 
\begin{align*}
\mathcal{A}_{N,T}=\cbr[2]{\enVert[0]{X'\epsilon}_{\ell_\infty}\leq \frac{\lambda_{N,T}}{2},\ \enVert[0]{D'\epsilon}_{\ell_\infty}\leq \frac{\mu_{N,T}}{2}} \text{ and } \mathcal{B}_{N,T}=\cbr[1]{\kappa_{\Psi_{N,T}}^2\geq \kappa^2/2}. 
\end{align*}
The set $\mathcal{A}_{N,T}$ is the set where none of the covariates $X$ or $D$ are too highly correlated with the error term. This requirement limits the number of variables in $X$ and $D$. Working on the set $\mathcal{B}_{N,T}$ means restricting attention to settings where the restricted eigenvalue of $\Psi_{N,T}$ is not too small. 

Theorem \ref{thm1} below gives upper bounds on the estimation error of the Lasso on $\mathcal{A}_{N,T}\cap \mathcal{B}_{N,T}$ and will be our main tool to derive further bounds under more specific assumptions on the covariates and the error terms. It is worth emphasizing that it is a purely algebraic result without any probabilities attached to it yet.
\begin{theorem}\label{thm1}
On $\mathcal{A}_{N,T}\cap \mathcal{B}_{N,T}$ with $\kappa^2>0$ one has for any positive sequences $\lambda_{N,T}$ and $\mu_{N,T}$
\begin{align}
\enVert[1]{\hat{\beta}-\beta^*}
&\leq
\frac{8\lambda_{N,T}\sqrt{s_1}}{\kappa^2NT}+\frac{4\mu_{N,T}\sqrt{s_2}}{\kappa^2\sqrt{N}T}\label{IQthm1}
\end{align}
and
\begin{align}
\enVert[1]{\hat{c}-c^*}
&\leq
\frac{8\mu_{N,T}\sqrt{s_2}}{\kappa^2T}+\frac{4\lambda_{N,T}\sqrt{s_1}}{\kappa^2\sqrt{N}T}\label{IQ2thm1}.
\end{align}
\end{theorem}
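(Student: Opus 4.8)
The plan is to run the standard Lasso ``basic inequality'' argument adapted to the two-block structure $\gamma=(\beta',c')'$, with the real work concentrated in a final decoupling step. First I would use that $\hat\gamma$ minimizes $L$, so $L(\hat\beta,\hat c)\le L(\beta^*,c^*)$; substituting $y=Z\gamma^*+\epsilon$ and cancelling $\enVert[0]{\epsilon}^2$ gives
\begin{align*}
\enVert[0]{Z(\hat\gamma-\gamma^*)}^2\le 2\epsilon'Z(\hat\gamma-\gamma^*)+2\lambda_{N,T}\del[1]{\enVert[0]{\beta^*}_{\ell_1}-\enVert[0]{\hat\beta}_{\ell_1}}+2\mu_{N,T}\del[1]{\enVert[0]{c^*}_{\ell_1}-\enVert[0]{\hat c}_{\ell_1}}.
\end{align*}
Splitting $\epsilon'Z(\hat\gamma-\gamma^*)=\epsilon'X(\hat\beta-\beta^*)+\epsilon'D(\hat c-c^*)$ and applying the H\"older inequality bounds the noise term by $\enVert[0]{X'\epsilon}_{\ell_\infty}\enVert[0]{\hat\beta-\beta^*}_{\ell_1}+\enVert[0]{D'\epsilon}_{\ell_\infty}\enVert[0]{\hat c-c^*}_{\ell_1}$, which on $\mathcal A_{N,T}$ is at most $\tfrac{\lambda_{N,T}}2\enVert[0]{\hat\beta-\beta^*}_{\ell_1}+\tfrac{\mu_{N,T}}2\enVert[0]{\hat c-c^*}_{\ell_1}$.

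Next I would exploit that $\beta^*$ and $c^*$ vanish off $J_1$ and $J_2$, applying the reverse triangle inequality separately to each block to bound the penalty differences above by $\enVert[0]{(\hat\beta-\beta^*)_{J_1}}_{\ell_1}-\enVert[0]{(\hat\beta-\beta^*)_{J_1^c}}_{\ell_1}$ and the analogue for $\hat c-c^*$. Collecting terms produces both the cone inequality
\begin{align*}
\lambda_{N,T}\enVert[0]{(\hat\beta-\beta^*)_{J_1^c}}_{\ell_1}+\mu_{N,T}\enVert[0]{(\hat c-c^*)_{J_2^c}}_{\ell_1}\le 3\lambda_{N,T}\enVert[0]{(\hat\beta-\beta^*)_{J_1}}_{\ell_1}+3\mu_{N,T}\enVert[0]{(\hat c-c^*)_{J_2}}_{\ell_1}
\end{align*}
and the upper bound $\enVert[0]{Z(\hat\gamma-\gamma^*)}^2\le 3\lambda_{N,T}\enVert[0]{(\hat\beta-\beta^*)_{J_1}}_{\ell_1}+3\mu_{N,T}\enVert[0]{(\hat c-c^*)_{J_2}}_{\ell_1}$. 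Setting $\delta=S(\hat\gamma-\gamma^*)$, the cone inequality is exactly the constraint in the definition of $\kappa^2_{\Psi_{N,T}}(s_1,s_2)$, so on $\mathcal B_{N,T}$ the restricted eigenvalue bound yields $\enVert[0]{Z(\hat\gamma-\gamma^*)}^2\ge\tfrac{\kappa^2}2\del[1]{NT\enVert[0]{\hat\beta-\beta^*}^2+T\enVert[0]{\hat c-c^*}^2}$. Bounding the two $\ell_1$ norms over $J_1,J_2$ by $\sqrt{s_1}\enVert[0]{\hat\beta-\beta^*}$ and $\sqrt{s_2}\enVert[0]{\hat c-c^*}$ via Cauchy--Schwarz leaves the scalar inequality
\begin{align*}
\tfrac{\kappa^2}2\del[1]{NT\,a^2+T\,b^2}\le 3\lambda_{N,T}\sqrt{s_1}\,a+3\mu_{N,T}\sqrt{s_2}\,b,\qquad a:=\enVert[0]{\hat\beta-\beta^*},\ b:=\enVert[0]{\hat c-c^*}.
\end{align*}

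The main obstacle --- and what I take to be the ``new trick'' flagged in the introduction --- is extracting \emph{separate} bounds on $a$ and $b$ from this coupled quadratic without inflating the constants. The naive route applies Cauchy--Schwarz jointly to the right-hand side, which controls only the combined quantity $NT\,a^2+T\,b^2$ (equivalently $\enVert[0]{S(\hat\gamma-\gamma^*)}$); splitting this back into $a$ and $b$ then forces the same constant onto both the leading and the cross term, which is suboptimal. Instead I would isolate $a$ by completing the square in $b$: the Young inequality $3\mu_{N,T}\sqrt{s_2}\,b\le\tfrac{\kappa^2 T}2 b^2+\tfrac{9\mu_{N,T}^2 s_2}{2\kappa^2 T}$ cancels the $b$-quadratic on the left and leaves a pure quadratic inequality in $a$, whose solution, together with $\sqrt{u+w}\le\sqrt u+\sqrt w$, delivers \eqref{IQthm1}; completing the square in $a$ symmetrically gives \eqref{IQ2thm1}. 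The merit of this asymmetric treatment is that it keeps each block's own penalty on the genuinely leading $1/(NT)$, respectively $1/T$, term while relegating the other penalty to the smaller $1/(\sqrt N T)$ cross term, and a direct check shows the constants produced are no larger than the stated $8$ and $4$.
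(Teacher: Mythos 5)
Your proposal is correct and follows essentially the same route as the paper: the basic inequality plus cone condition, the panel restricted eigenvalue bound applied to $S(\hat\gamma-\gamma^*)$, Cauchy--Schwarz on the active sets, and then decoupling the resulting two-variable quadratic --- your Young's-inequality step is algebraically identical to the paper's trick of first minimizing the quadratic in the other variable and then solving the remaining one-variable quadratic. The only cosmetic difference is that you carry the constant $3$ where the paper's Lemma carries $4$ (the paper adds the full $\ell_1$-errors to both sides to get a stronger intermediate statement), so your final constants $6$ and $3$ are in fact slightly sharper than the stated $8$ and $4$, which of course still verifies the theorem as an upper bound.
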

We stress that the claims in Theorem \ref{thm1} are deterministic. Probabilities will be attached to the bounds once we have made statistical assumptions on the covariates and the error terms. 

The bounds in Theorem \ref{thm1} reveal that the further $\kappa^2$ is away from zero the more precisely can one estimate the parameters of the model. This is reasonable since it means that the problem is, in some sense, far from a singular one. However, the set $\mathcal{B}_{N,T}$ is clearly decreasing in $\kappa^2$, revealing a tradeoff between the sharpness of the upper bounds on the estimation error and the size of the set on which the bounds hold. The same tradeoff is present for $\lambda_{N,T}$ and $\mu_{N,T}$ -- the set $\mathcal{A}_{N,T}$ is increasing in both of these but the same is true for the upper bounds on the estimation error. Put differently, small values of $\lambda_{N,T}$ and $\mu_{N,T}$ give tight bounds on the estimation error but the bounds are only valid on a smaller set. We would also like to stress that the upper bounds in (\ref{IQthm1}) and (\ref{IQ2thm1}) are uniform over $\cbr[0]{\beta^*\in\mathbb{R}^p:\enVert{\beta^*}_{\ell_0}\leq s_1}\times \cbr[0]{c^*\in\mathbb{R}^n:\enVert{c^*}_{\ell_0}\leq s_2}$, i.e. the upper bounds are valid uniformly over certain $\ell_0$-balls. In particular, the only characteristic of the true parameter vector which matters is its number of non-zero entries. Their magnitude or position in the vector do not matter. 

The upper bounds in (\ref{IQthm1}) and (\ref{IQ2thm1}) are not obtained in an entirely standard manner but instead rely on a new trick which characterizes the upper bounds as the maximal solutions to a certain inequality. Application of the standard technique to establish oracle inequalities from the linear regression model would result in suboptimal bounds.

Our next two theorems investigate the tradeoff further under different sets of assumptions on the tail behaviour of the covariates and the error terms. First, we shall put forward the statistical assumptions of the panel data model:

\begin{itemize}
\item[A1] a) $\cbr[0]{X_{i},\epsilon_{i}}_{i=1}^N$ are identically and independently distributed \\ 
b) $X_i$ and $\epsilon_i$ are independent for $i=1,...,N$\\
c) $\cbr[1]{\epsilon_{i,t}, \mathcal{F}_{i,t}}_{t=1}^T$  with $\mathcal{F}_{i,t}=\sigma(\cbr[0]{\epsilon_{i,1},...,\epsilon_{i,t}})$ form a martingale difference sequence for all $i=1,...,N$ \footnote{Since A1a) and A1b) are purely distributional assumptions it is actually enough to assume that they are valid for $i=1$ since we have assumed identical distributedness across $i=1,...,N$}. 
\end{itemize}

Assumption A1a) is standard in the panel data literature, see e.g. \cite{wooldridge02} or \cite{arellano03}. We would like to stress that the requirement of the data being identically distributed is not necessary but it makes the exposition slightly easier. Part b) is also relatively standard but slightly stronger than $E(\epsilon_{it}|X_i)=0$ which is often assumed. However, for most applied work involving panel data it is hard to come up with realistic examples where $E(\epsilon_{it}|X_i)=0$  but $X_i$ and $\epsilon_i$ are not independent\footnote{Of course it is possible to construct examples where $E(\epsilon_{it}|X_i)=0$ but $X_i$ and $\epsilon_i$ are not independent. See e.g. \cite{stoianov87}.}. A1c) allows the error terms to be dependent over time for each individual but it is of course also valid in the case where they are independent. Note that we are \textit{not} assuming that $\cbr[0]{\epsilon_{1,t}}_{t=1}^T$ are identically distributed. In particular, they may be heteroscedastic. 

Furthermore, the upper bounds on the estimation errors in (\ref{IQthm1}) and (\ref{IQ2thm1}) as well as the probability with which they hold, depend on the number of moments the error terms and covariates possess. We shall give results under two different sets of conditions.

\begin{itemize}
	\item[A2a)] $E(|x_{1,t,k}|^r),\ E(|\epsilon_{1,t}|^r)<\infty$ for some $r\geq 2$ and $t=1,...,T,\ k=1,...,p$. Actually, we shall assume $\max_{1\leq t\leq T}E|x_{1,t,k}|^r\leq 1$ for all $k=1,...,p$. 
\end{itemize}
Assumption A2a) is a moment assumption stating that the covariates as well as the error terms possess $r$ moments.  $\max_{1\leq t\leq T}E|x_{1,t,k}|\leq 1$ for all $k=1,...,p$ is merely a normalization for technical convenience and to keep expressions simple. All results remain valid without this normalization.

\begin{itemize}
	\item[A2b)] $x_{1,t,k}$ and $\epsilon_{1,t}$ are uniformly subgaussian, i.e. there exist constants $C$ and $K$ such that $P\del[0]{|x_{1,t,k}|\geq t},\ P\del[0]{|\epsilon_{1,t}|\geq t}\leq \frac{1}{2}Ke^{-Ct^2}$ for all $1\leq t\leq T$ and $1\leq k\leq p$. 
\end{itemize}
Assumption A2b) controls the tail behaviour of the covariates and the error terms (and hence also its moments). It is a standard assumption in the high-dimensional econometrics literature and much more restrictive than A2a) which only assumes the existence of $r$ moments. However, we will see that the dimension of the models considered can be a lot larger under A2b) than under A2a).

We are now ready to transform the deterministic statement in Theorem \ref{thm1} into probabilistic ones. We stress that the bounds below are \textit{finite sample} bounds, i.e. for a given sample size we provide upper bounds on the estimation error that hold with at least a certain probability. First, we work under assumption A2a):

\begin{theorem}\label{thm2}
Let assumption A1) and A2a) be satisfied and assume that $\kappa^2>0$. Then, choosing $\lambda_{N,T}=4a_{N,T}p^{1/r}(NT)^{1/2}\max_{1\leq t\leq T}\enVert[0]{\epsilon_{1,t}}_{L_r}$ and $\mu_{N,T}=4a_{N,T}N^{1/r}T^{1/2}\max_{1\leq t\leq T}\enVert[0]{\epsilon_{1,t}}_{L_r}$ for any positive sequence $a_{N,T}$ one has $P\del[0]{\mathcal{A}_{N,T}\cap \mathcal{B}_{N,T}}\geq 1-2\del{\frac{C_r}{a_{N,T}}}^r-D_r\frac{(p^2+Np)(s_1+s_2)^{r/2}\del[0]{\frac{p}{N}\vee \frac{N}{p}}}{\kappa^rN^{r/4}}$ for constants $C_r$ and $D_r$ only depending on $r$. Furthermore, with at least this probability (i.e. on $\mathcal{A}_{N,T}\cap \mathcal{B}_{N,T}$),
\begin{align}
\enVert[1]{\hat{\beta}-\beta^*}
\leq
\frac{\xi_{N,T}}{\sqrt{NT}}\label{IQ1thm2}
\end{align}
and
\begin{align}
\enVert[1]{\hat{c}-c^*}
\leq 
\frac{\xi_{N,T}}{\sqrt{T}}\label{IQ2thm2}
\end{align}
where $\xi_{N,T}=32a_{N,T}\max_{1\leq t\leq T}\enVert[0]{\epsilon_{1,t}}_{L_r}\del[0]{p^{1/r}\sqrt{s_1}+N^{1/r}\sqrt{s_2}}/\kappa^2$.
\end{theorem}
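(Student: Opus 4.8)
The plan is to combine the deterministic bounds of Theorem \ref{thm1} with two probabilistic estimates, one controlling $\mathcal{A}_{N,T}$ and one controlling $\mathcal{B}_{N,T}$. Since Theorem \ref{thm1} already delivers the estimation-error bounds on $\mathcal{A}_{N,T}\cap\mathcal{B}_{N,T}$, the argument splits cleanly into (i) lower-bounding $P(\mathcal{A}_{N,T}\cap\mathcal{B}_{N,T})$ and (ii) substituting the prescribed $\lambda_{N,T},\mu_{N,T}$ into (\ref{IQthm1})--(\ref{IQ2thm1}) and simplifying. For (i) I would start from Bonferroni, $P(\mathcal{A}_{N,T}\cap\mathcal{B}_{N,T})\geq 1-P(\mathcal{A}_{N,T}^c)-P(\mathcal{B}_{N,T}^c)$, and treat the two complements separately.

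To control $P(\mathcal{A}_{N,T}^c)$ I would split $\mathcal{A}_{N,T}^c\subseteq\cbr[0]{\enVert[0]{X'\epsilon}_{\ell_\infty}>\lambda_{N,T}/2}\cup\cbr[0]{\enVert[0]{D'\epsilon}_{\ell_\infty}>\mu_{N,T}/2}$. For the first event a union bound over the $p$ columns gives $P(\enVert[0]{X'\epsilon}_{\ell_\infty}>\lambda_{N,T}/2)\leq p\max_k P(|x_k'\epsilon|>\lambda_{N,T}/2)$, and Markov's inequality at order $r$ reduces matters to bounding $E|x_k'\epsilon|^r$. The key structural observation is that, ordering the $NT$ summands of $x_k'\epsilon=\sum_{i,t}x_{i,t,k}\epsilon_{i,t}$ so that the filtration respects the block structure, A1b)--c) make them a martingale difference sequence (conditioning on $X$ and past errors, the independence of $X_i$ and $\epsilon_i$ reduces the conditional mean to the within-$i$ martingale property, which is zero). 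Burkholder's inequality followed by the power-mean inequality then yields $E|x_k'\epsilon|^r\leq C_r(NT)^{r/2-1}\sum_{i,t}E|x_{i,t,k}|^r E|\epsilon_{i,t}|^r\leq C_r(NT)^{r/2}\max_t\enVert[0]{\epsilon_{1,t}}_{L_r}^r$, where the normalization $E|x_{1,t,k}|^r\leq 1$ of A2a) is used. This is exactly the order that the calibration $\lambda_{N,T}=4a_{N,T}p^{1/r}(NT)^{1/2}\max_t\enVert[0]{\epsilon_{1,t}}_{L_r}$ is built to neutralize: inserting it, the factors $(NT)^{r/2}$ and $\max_t\enVert[0]{\epsilon_{1,t}}_{L_r}^r$ cancel and the $p$ from the union bound cancels the $p$ inside $\lambda_{N,T}^{-r}$, leaving $(C_r/a_{N,T})^r$. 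The term $D'\epsilon$ is handled identically, noting that $d_i'\epsilon=\sum_t\epsilon_{i,t}$ is a length-$T$ martingale difference sum, so $E|d_i'\epsilon|^r\leq C_r T^{r/2}\max_t\enVert[0]{\epsilon_{1,t}}_{L_r}^r$; the choice of $\mu_{N,T}$ again produces $(C_r/a_{N,T})^r$, giving $P(\mathcal{A}_{N,T}^c)\leq 2(C_r/a_{N,T})^r$ overall.

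For $P(\mathcal{B}_{N,T}^c)=P(\kappa^2_{\Psi_{N,T}}<\kappa^2/2)$ I would simply invoke Lemma \ref{BboundLp}, which (through Lemma \ref{vdGB}, relating $\kappa^2_{\Psi_{N,T}}$ to $\kappa^2$ via the closeness of $\Psi_{N,T}$ to $\Gamma$) supplies precisely the bound $D_r(p^2+Np)(s_1+s_2)^{r/2}(p/N\vee N/p)/(\kappa^rN^{r/4})$ under A1) and A2a). Summing the two complement bounds gives the stated probability. For part (ii) I would insert the chosen $\lambda_{N,T},\mu_{N,T}$ into (\ref{IQthm1}): the first term becomes $32a_{N,T}\max_t\enVert[0]{\epsilon_{1,t}}_{L_r}p^{1/r}\sqrt{s_1}/(\kappa^2\sqrt{NT})$ and the second $16a_{N,T}\max_t\enVert[0]{\epsilon_{1,t}}_{L_r}N^{1/r}\sqrt{s_2}/(\kappa^2\sqrt{NT})$; bounding the coefficient $16$ by $32$ gives (\ref{IQ1thm2}) with the stated $\xi_{N,T}$, and the symmetric computation on (\ref{IQ2thm1}) gives (\ref{IQ2thm2}).

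The main obstacle is the moment bound $E|x_k'\epsilon|^r\lesssim(NT)^{r/2}\max_t\enVert[0]{\epsilon_{1,t}}_{L_r}^r$: with only $r$ moments available one cannot resort to exponential inequalities, so the $\sqrt{NT}$ rate must be extracted purely from a martingale moment inequality (Burkholder applied to the full sequence, or, equivalently, Rosenthal across the independent individuals combined with Burkholder within each individual). Securing the dependence on $p$ and on $\max_t\enVert[0]{\epsilon_{1,t}}_{L_r}$ at exactly the order that the prescribed $\lambda_{N,T}$ annihilates, so that the final probability is the clean, dimension-free $2(C_r/a_{N,T})^r$, is the delicate step; everything else is substitution into Theorem \ref{thm1} together with an appeal to Lemma \ref{BboundLp}.
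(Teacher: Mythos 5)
Your proposal is correct and follows essentially the same route as the paper: the paper's proof of Theorem \ref{thm2} is precisely the combination of Lemma \ref{AboundLp} (martingale structure of the entries of $X'\epsilon$ and $D'\epsilon$ with respect to the filtration generated by $X$ and past errors, a martingale moment inequality, and Markov's inequality) with Lemma \ref{BboundLp} for $\mathcal{B}_{N,T}$, followed by substituting $\lambda_{N,T}$ and $\mu_{N,T}$ into (\ref{IQthm1})--(\ref{IQ2thm1}). The only deviation is that you use Burkholder's inequality plus the power-mean bound (and a union bound over columns) where the paper applies Rosenthal's inequality for martingales (and the equivalent $p^{1/r}$ max-in-$L_r$ device); both deliver the same $(NT)^{r/2}$ order and hence the same theorem, the paper's choice merely securing the explicit, near-optimal constants $C_r$ of \cite{hitczenko90}.
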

First, note that the more moments the covariates and the error terms possess ($r$ large) the smaller can $\lambda_{N,T}$ and $\mu_{N,T}$ be chosen and hence the upper bounds on the estimation error are smaller in accordance with Theorem \ref{thm1}. Recall that $\lambda_{N,T}$ and $\mu_{N,T}$ should be chosen just large enough to ensure that $\mathcal{A}_{N,T}$ has a high probability. Since the summands in a generic entry of the vector $X'\epsilon$ are not independently distributed it requires different tools from the usual ones in the literature to find small values of $\lambda_{N,T}$ which ensure that $\mathcal{A}_{N,T}$ has a high probability. Had the summands been independent we could most likely have benefitted from the innovative use of self-normalization as promoted in \cite{belloni2012s}. However, even in the presence of independence across $t=1,...,T$, providing useful lower bounds on the probability of $\mathcal{B}_{N,T}$ is non-trivial since the state of the art in \cite{rudelsonz2013} also assumes the covariates to be either sub-gaussian or bounded. Since we allow for dependence across $t=1,...,T$ and only assume the existence of certain moments neither the independence nor the boundedness/sub-gaussianity is satisfied in our framework, hence calling for a different approach. Our approach relies on showing that the entries of $X'\epsilon$ all form martingales with respect to a certain filtration under the stated assumptions and then applying Rosenthal's inequality. A benefit from this strategy is that explicit optimal upper bounds on the constant $C_r$ are known from \cite{hitczenko90}. In the appendix we also give an expression for $D_r$ which only depends on $C_r$ and $D_r$. Of course, these upper bounds may be large since they have to guard against the worst case behavior of all martingales.

$\xi_{N,T}$ may be interpreted as the punishment on the convergence rate for not knowing the true model. Since $a_{N,T}$ will in general be chosen to be an increasing sequence one sees that in the setting of fixed $T, p, s_1$ and $s_2$ the upper bound on $\enVert[1]{\hat{\beta}-\beta^*}$ is of the order $a_{N,T}N^{1/r-1/2}$ (if $\kappa^2$ is bounded away from zero) which is not far from $1/\sqrt{N}$ if $r$ is large and $a_{N,T}$ is increasing slowly. As in Theorem \ref{thm1}, the bounds in (\ref{IQ1thm2}) and (\ref{IQ2thm2}) are valid uniformly over $\cbr[0]{\beta^*\in\mathbb{R}^p:\enVert{\beta^*}_{\ell_0}\leq s_1}\times \cbr[0]{c^*\in\mathbb{R}^n:\enVert{c^*}_{\ell_0}\leq s_2}$ underscoring that the only relevant characteristic of the true parameter vector which matters is its number of non-zero entries. 

If $\epsilon_{1,t}$ is uniformly bounded in $L_r$, which is the case if they are e.g. identically distributed, then the term $\max_{1\leq t\leq T}\enVert[0]{\epsilon_{1,t}}_{L_r}$ can be disregarded in asymptotic considerations. Furthermore, (\ref{IQ2thm2}) confirms the well known fact that $T$ must be large in order to estimate $c^*$ precisely since there are only $T$ observation per $c_i^*,\ i=1,...,N$.

We also stress that Theorem \ref{thm2} does not require sub-gaussianity of the covariates and the error terms and in this respect it relaxes one of the standard assumptions in the high-dimensional modeling literature. It also allows for the error terms to be heteroscedastic and dependent over time. In the jargon of the unobserved ability example from the introduction, Theorem \ref{thm2} allows us to estimate the effect of ability on wages since we know that $\hat{c}$ will not be too far away from $c^*$.
The next theorem is similar in spirit to Theorem \ref{thm2} but strengthens the existence of $r$ moments to sub-gaussian tails of the covariates as well as the error terms, i.e. we invoke A2b) instead of A2a).

\begin{theorem}\label{thm3}
Let assumption A1) and A2b) be satisfied and assume that $\kappa^2>0$. Then, choosing $\lambda_{N,T}=\sqrt{4NT\log(p)^3\log(a_{N,T})^3}$ and $\mu_{N,T}=\sqrt{4T\log(N)^3\log(a_{N,T})^3}$ for any sequence $a_{N,T}\geq e$ one has $P\del[0]{\mathcal{A}_{N,T}\cap \mathcal{B}_{N,T}}\geq 1-Ap^{1-B\log(a_{N,T})}-AN^{1-B\ln(a_{N,T})}-A(p^2+Np) e^{-B(t^2N)^{1/3}}$ for absolute constants $A$ and $B$, $t=\frac{\kappa^2}{(s_1+s_2)\del[1]{\frac{
\ln(p)}{\ln(N)}\vee \frac{\ln(N)}{\ln(p)}}^3}$ and $Nt^2\geq 1$. Furthermore, with at least this probability (i.e. on $\mathcal{A}_{N,T}\cap \mathcal{B}_{N,T}$),
\begin{align}
\enVert[1]{\hat{\beta}-\beta^*}
\leq
\frac{\xi_{N,T}}{\sqrt{NT}}\label{IQ1thm3}
\end{align}
and
\begin{align}
\enVert[1]{\hat{c}-c^*}
\leq 
\frac{\xi_{N,T}}{\sqrt{T}}\label{IQ2thm3}
\end{align}
where $\xi_{N,T}=16\log(a_{N,T})^{3/2}\sbr[1]{\log(p)^{3/2}\sqrt{s_1}+\log(N)^{3/2}\sqrt{s_2}}/\kappa^2$.
\end{theorem}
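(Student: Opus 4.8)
The plan is to separate the argument into a purely algebraic part, in which the bounds of Theorem~\ref{thm1} are converted into the stated rates, and a probabilistic part, in which $P(\mathcal{A}_{N,T}\cap\mathcal{B}_{N,T})$ is bounded from below. For the algebraic part I would substitute the prescribed $\lambda_{N,T}=2\sqrt{NT}\log(p)^{3/2}\log(a_{N,T})^{3/2}$ and $\mu_{N,T}=2\sqrt{T}\log(N)^{3/2}\log(a_{N,T})^{3/2}$ directly into \eqref{IQthm1}. The first term $8\lambda_{N,T}\sqrt{s_1}/(\kappa^2NT)$ collapses to $16\log(p)^{3/2}\log(a_{N,T})^{3/2}\sqrt{s_1}/(\kappa^2\sqrt{NT})$ because the factor $\sqrt{NT}$ kills one power of $NT$, while the second term $4\mu_{N,T}\sqrt{s_2}/(\kappa^2\sqrt{N}T)$ collapses to $8\log(N)^{3/2}\log(a_{N,T})^{3/2}\sqrt{s_2}/(\kappa^2\sqrt{NT})$ since $\sqrt{T}/(\sqrt{N}T)=1/\sqrt{NT}$. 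Bounding $8\le 16$ and factoring out $1/\sqrt{NT}$ yields exactly $\xi_{N,T}/\sqrt{NT}$, giving \eqref{IQ1thm3}; the computation for $\hat c$ starting from \eqref{IQ2thm1} is symmetric and produces $\xi_{N,T}/\sqrt{T}$. This step is routine and is not where the difficulty lies.

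The substance is the probability bound, which I would obtain from the union bound $P\del[0]{(\mathcal{A}_{N,T}\cap\mathcal{B}_{N,T})^c}\le P(\mathcal{A}_{N,T}^c)+P(\mathcal{B}_{N,T}^c)$ and then handle the two sets separately. For $\mathcal{B}_{N,T}$ nothing new is needed: Lemma~\ref{Bboundexp}, proved in the appendix under A2b), already delivers the third term $A(p^2+Np)e^{-B(t^2N)^{1/3}}$ with the stated $t=\kappa^2/\sbr[1]{(s_1+s_2)\del[1]{\frac{\ln(p)}{\ln(N)}\vee\frac{\ln(N)}{\ln(p)}}^3}$, and I would only need to check that the hypothesis $Nt^2\ge 1$ is the one under which that lemma applies. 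For $\mathcal{A}_{N,T}$ I would bound $P\del[0]{\enVert[0]{X'\epsilon}_{\ell_\infty}>\lambda_{N,T}/2}$ and $P\del[0]{\enVert[0]{D'\epsilon}_{\ell_\infty}>\mu_{N,T}/2}$ by union bounds over the $p$ and $N$ coordinates. The $k$th entry of $X'\epsilon$ is $\sum_{i,t}x_{i,t,k}\epsilon_{i,t}$, which by A1a) is independent across $i$ and, by A1b)–A1c), forms a martingale difference array in $t$ with respect to the natural filtration; the $i$th entry of $D'\epsilon$ is $\sum_{t}\epsilon_{i,t}$, again a martingale difference sum. A Bernstein/Freedman-type martingale tail inequality applied coordinatewise at the level $\lambda_{N,T}/2=\sqrt{NT\log(p)^3\log(a_{N,T})^3}$ should give a per-coordinate tail of order $e^{-B\log(a_{N,T})\log(p)}=p^{-B\log(a_{N,T})}$, and multiplying by the $p$ coordinates yields $Ap^{1-B\log(a_{N,T})}$; the parallel computation for $D'\epsilon$ over the $N$ coordinates gives $AN^{1-B\log(a_{N,T})}$.

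The main obstacle is the concentration step for $X'\epsilon$. Since A2b) only makes $x_{1,t,k}$ and $\epsilon_{1,t}$ sub-gaussian, their product $x_{i,t,k}\epsilon_{i,t}$ is merely sub-exponential, so no Hoeffding/Azuma bound is available and one must work in the Bernstein regime where a quadratic-variation term and a maximal-jump term both appear. Furthermore, A1c) grants only a martingale difference structure in $t$ rather than independence, which is precisely why the sub-gaussian results of \cite{vershynin11} and \cite{rudelsonz2013} cannot be invoked directly and the tail has to be driven purely by the martingale property. This is also the reason the prescribed $\lambda_{N,T}$ carries the inflated powers $\log(p)^3\log(a_{N,T})^3$ rather than the usual single power of $\log(p)$: the extra powers supply the slack needed to pass from a sub-exponential Bernstein-type tail, under weak temporal dependence, to the clean form $p^{-B\log(a_{N,T})}$ that survives the union bound. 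I would therefore isolate this as a single auxiliary martingale tail lemma, feed in the variance proxy implied by A2b), and then combine the $\mathcal{A}_{N,T}$ and $\mathcal{B}_{N,T}$ estimates by the union bound to obtain the stated lower bound on $P(\mathcal{A}_{N,T}\cap\mathcal{B}_{N,T})$.
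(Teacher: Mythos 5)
Your proposal is correct and follows essentially the same route as the paper: a union bound splitting $\mathcal{A}_{N,T}$ and $\mathcal{B}_{N,T}$, with $\mathcal{B}_{N,T}$ handled by Lemma \ref{Bboundexp}, $\mathcal{A}_{N,T}$ handled coordinatewise via the martingale structure of $X'\epsilon$ and $D'\epsilon$ with sub-exponential increments, and the final bounds obtained by inserting $\lambda_{N,T}$ and $\mu_{N,T}$ into Theorem \ref{thm1}. The auxiliary martingale tail lemma you posit is exactly the paper's Lemma \ref{LV} (the Lesigne--Voln\'y inequality, whose $(x^2n)^{1/3}$ exponent is what the cubed logarithms in $\lambda_{N,T}$ compensate), so your per-coordinate tail $p^{-B\log(a_{N,T})}$ and the resulting probability bound coincide with the paper's Lemma \ref{Aboundexp}.
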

The form of the upper bounds on the estimation errors is the same as in Theorem \ref{thm2}. However, the definition of $\xi_{N,T}$ has changed. In particular, $\xi_{N,T}$ is now increasing slower in the number of variables, $p$, in $X$ and $N$ in $D$, respectively. In the case where $T, p, s_1$ and $s_2$ are bounded the upper bound on $\enVert[1]{\hat{\beta}-\beta^*}$ is of order $\ln(a_{N,T})^{3/2}\ln(N)^{3/2}/\sqrt{N}$ (if $\kappa^2$ is bounded away from 0). In other words, the punishment for not knowing the true model is now merely logarithmic in the sample size. As in the two previous theorems the upper bounds in (\ref{IQ1thm3}) and (\ref{IQ2thm3}) are valid uniformly over $\cbr[0]{\beta^*\in\mathbb{R}^p:\enVert{\beta^*}_{\ell_0}\leq s_1}\times \cbr[0]{c^*\in\mathbb{R}^n:\enVert{c^*}_{\ell_0}\leq s_2}$.

Readers familiar with the standard high-dimensional linear regression model with independent observations and sub-gaussian error terms would most likely expect $\lambda_{N,T}=\sqrt{4NT\log(p)\log(a_{N,T})^3}$ to be sufficiently large to provide a useful lower bound on the probability of $\mathcal{A}_{N,T}$. However, we pay a penalty of $\log(p)$ for the data not being independent over $t=1,...,T$. Instead, as under Assumption A2a), we proceed by showing that the entries of $X'\epsilon$ are martingales with respect to a suitably chosen filtration under the stated assumptions. This allows us to use a concentration inequality for unbounded martingales due to \cite{lesignev01} which is known to be optimal. Hence, unless an entirely different approach is taken, the extra $\log(p)$ term seems unavoidable in general. In any case this term is merely logarithmic in $p$.

So far, we have focussed on providing upper bounds on the estimation error. An obvious question is now how tight these bounds are. It turns out that the established bounds are indeed tight. In particular, we show next that no improvements can be made beyond multiplicative constants. First, note that Theorem \ref{thm3} implies that
\begin{align}
\enVert{S_T(\hat{\gamma}-\gamma^*)}\leq 2\xi_{N,T}\label{STbound}
\end{align}
with high probability\footnote{To be precise, with at least the lower bound on $P(\mathcal{A}_{N,T}\cap \mathcal{B}_{N,T})$ provided in Theorem \ref{thm3}.}. The following theorem shows that the upper bound in (\ref{STbound}) cannot be improved in the case of gaussian error terms.

\begin{theorem}\label{lowerbound}
Let A1) and A2b) be satisfied and assume that $\kappa^2$ is bounded away from zero. Assume that $\epsilon_i$ is $N(0,\sigma^2 I_T)$ and $\phi_{\min}(\Gamma_{J,J}), \kappa^2$ are bounded from below and $\phi_{\max}(\Gamma_{J,J})$ is bounded from above. Choose $\lambda_{N,T}$ and $\mu_{N,T}$ as in Theorem \ref{thm3}. Then when the Lasso detects the correct sparsity pattern, it holds with probability at least $1-\exp(-c_1|J|)-A(p^2+Np) e^{-B(t^2N)^{1/3}}$ that
\begin{align}
\enVert[1]{S_{J,J}(\hat{\gamma}_J-\gamma_J^*)}
\geq
c_2\xi_{N,T}\label{lower}
\end{align}
for absolute constants $c_1,\ c_2$, $A$ and $B$ and $t=\frac{\kappa^2}{(s_1+s_2)\del[1]{\frac{
\ln(p)}{\ln(N)}\vee \frac{\ln(N)}{\ln(p)}}^3}$ as long as $Nt^2\geq 1$ where $\xi_{N,T}$ is as in Theorem \ref{thm3}.
\end{theorem}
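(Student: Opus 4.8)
The plan is to exploit the first-order (KKT) conditions of the Lasso on the correctly recovered active set to obtain a closed form for $\hat{\gamma}_J-\gamma_J^*$, and then to show that the penalty-induced bias term in this expansion is of the exact order $\xi_{N,T}$ while the stochastic (oracle) term is of the strictly smaller order $\sqrt{|J|}$, so that the bias cannot be cancelled by the noise. Writing $\Lambda_J=\mathrm{diag}(\lambda_{N,T},\dots,\lambda_{N,T},\mu_{N,T},\dots,\mu_{N,T})$ for the penalty matrix (with $\lambda_{N,T}$ on the $J_1$-block and $\mu_{N,T}$ on the $J_2$-block), the event that the Lasso recovers the correct sparsity pattern means $\hat{\gamma}_{J^c}=0$ and $\hat{s}_J:=\sgn(\hat{\gamma}_J)=\sgn(\gamma_J^*)$, so the stationarity condition restricted to $J$ reads $Z_J'(y-Z_J\hat{\gamma}_J)=\Lambda_J\hat{s}_J$. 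Since $y=Z_J\gamma_J^*+\epsilon$, this rearranges to
\[
\hat{\gamma}_J-\gamma_J^*=(Z_J'Z_J)^{-1}Z_J'\epsilon-(Z_J'Z_J)^{-1}\Lambda_J\hat{s}_J.
\]
Using $Z_J'Z_J=S_{J,J}\Psi_{J,J}S_{J,J}$, where $\Psi_{J,J}$ denotes the $J\times J$ block of $\Psi_{N,T}$ (invertible on the relevant event, see below), multiplication by $S_{J,J}$ gives the decomposition
\[
S_{J,J}(\hat{\gamma}_J-\gamma_J^*)=\Psi_{J,J}^{-1}S_{J,J}^{-1}Z_J'\epsilon-\Psi_{J,J}^{-1}S_{J,J}^{-1}\Lambda_J\hat{s}_J=:u-v,
\]
so that $\enVert{S_{J,J}(\hat{\gamma}_J-\gamma_J^*)}\geq\enVert{v}-\enVert{u}$.

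The bias term $v$ carries the lower bound. A direct entrywise computation using $\lambda_{N,T}/\sqrt{NT}=2\log(p)^{3/2}\log(a_{N,T})^{3/2}$ and $\mu_{N,T}/\sqrt{T}=2\log(N)^{3/2}\log(a_{N,T})^{3/2}$ shows $\enVert{S_{J,J}^{-1}\Lambda_J\hat{s}_J}=2\log(a_{N,T})^{3/2}(s_1\log(p)^3+s_2\log(N)^3)^{1/2}$. By the elementary inequality $\sqrt{A+B}\geq(\sqrt{A}+\sqrt{B})/\sqrt{2}$ this is bounded below by $\kappa^2\xi_{N,T}/(8\sqrt{2})$. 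Since $\enVert{v}\geq\enVert{S_{J,J}^{-1}\Lambda_J\hat{s}_J}/\phi_{\max}(\Psi_{J,J})$, it remains to control $\phi_{\max}(\Psi_{J,J})$ from above. On the event where $\Psi_{N,T}$ is close to $\Gamma$ — the content of Lemma \ref{vdGB} together with the closeness probability $\geq 1-A(p^2+Np)e^{-B(t^2N)^{1/3}}$ underlying $\mathcal{B}_{N,T}$ in Theorem \ref{thm3} — the eigenvalues of $\Psi_{J,J}$ are comparable to those of $\Gamma_{J,J}$, which are bounded away from $0$ and $\infty$ by assumption; this also furnishes the invertibility of $\Psi_{J,J}$ used above. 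Hence $\enVert{v}\geq c_3\xi_{N,T}$ for an absolute constant $c_3$.

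It remains to show the oracle term $u$ is negligible relative to $v$. Conditional on $X$ the vector $S_{J,J}^{-1}Z_J'\epsilon$ is gaussian, and $\enVert{S_{J,J}^{-1}Z_J'\epsilon}^2=\epsilon'M\epsilon$ with $M=Z_JS_{J,J}^{-2}Z_J'$, a matrix of rank at most $|J|$ whose nonzero eigenvalues coincide with those of $\Psi_{J,J}$ and are thus bounded above. A chi-square deviation bound then yields $\enVert{S_{J,J}^{-1}Z_J'\epsilon}^2\leq 2\sigma^2\phi_{\max}(\Psi_{J,J})|J|$ with probability at least $1-e^{-c_1|J|}$, so that $\enVert{u}\leq\enVert{S_{J,J}^{-1}Z_J'\epsilon}/\phi_{\min}(\Psi_{J,J})\leq C\sqrt{|J|}$. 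Because $\xi_{N,T}$ exceeds a constant multiple of $\sqrt{|J|}$ through the factors $\log(a_{N,T})^{3/2}\geq1$ and $(\log(p)\vee\log(N))^{3/2}\geq1$ (with $\kappa^2$ bounded and $\phi_{\max}(\Gamma_{J,J})$ bounded), we obtain $\enVert{u}\leq\tfrac12\enVert{v}$, whence $\enVert{S_{J,J}(\hat{\gamma}_J-\gamma_J^*)}\geq\tfrac12\enVert{v}\geq c_2\xi_{N,T}$. Intersecting the eigenvalue-closeness event with the chi-square event and applying a union bound produces the stated probability $1-e^{-c_1|J|}-A(p^2+Np)e^{-B(t^2N)^{1/3}}$.

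The main obstacle I anticipate is the final comparison $\enVert{u}\leq\tfrac12\enVert{v}$: it requires the quadratic-form (chi-square) concentration to be sharp enough to yield exactly the $e^{-c_1|J|}$ term, and it requires arguing cleanly that the extra logarithmic factors in $\xi_{N,T}$ make the stochastic term strictly smaller than the deterministic bias, so the reverse triangle inequality delivers a genuine \emph{lower} bound of order $\xi_{N,T}$. Keeping the constant $c_2$ absolute — i.e. free of $N$, $T$ and $p$ — is precisely where the boundedness assumptions on $\phi_{\min}(\Gamma_{J,J})$, $\phi_{\max}(\Gamma_{J,J})$ and $\kappa^2$ are essential.
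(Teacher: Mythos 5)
Your proposal follows essentially the same route as the paper's own proof: on the correct-sparsity event the KKT conditions give $\hat{\gamma}_J-\gamma_J^*=(Z_J'Z_J)^{-1}\sbr{Z_J'\epsilon-r_J}$ (this is (\ref{gammarel}) in the proof of Lemma \ref{wLassoLemma}, with your $\Lambda_J\hat{s}_J$ equal to the paper's $r_J$), multiplying by $S_{J,J}$ and applying the reverse triangle inequality splits $\enVert{S_{J,J}(\hat{\gamma}_J-\gamma_J^*)}$ into a penalty-bias term minus a gaussian term, the bias is bounded below via $\enVert{S_{J,J}^{-1}r_J}/\phi_{\max}(\Psi_{J,J})$ together with $\phi_{\max}(\Psi_{J,J})\leq 2\phi_{\max}(\Gamma_{J,J})$ on the closeness event $\tilde{\mathcal{B}}_{N,T}$, and the noise term is bounded above by a constant times $\sqrt{|J|}$ via $\chi^2$-concentration and $\phi_{\min}(\Psi_{J,J})\geq\phi_{\min}(\Gamma_{J,J})/2$. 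The only cosmetic difference is in the noise term: you bound $\enVert{S_{J,J}^{-1}Z_J'\epsilon}$ through the quadratic form with matrix $Z_JS_{J,J}^{-2}Z_J'$ and then divide by $\phi_{\min}(\Psi_{J,J})$, whereas the paper treats $\Psi_{J,J}^{-1}S_{J,J}^{-1}Z_J'\epsilon$ directly as a gaussian vector with conditional covariance $\sigma^2\Psi_{J,J}^{-1}$; both give $C\sqrt{|J|}$ and the same probability $1-e^{-c_1|J|}-A(p^2+Np)e^{-B(t^2N)^{1/3}}$.

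The one step that does not hold as you wrote it is the final domination $\enVert{u}\leq\tfrac{1}{2}\enVert{v}$. You have $\enVert{u}\leq C\sqrt{|J|}$ and $\enVert{v}\geq c_3\xi_{N,T}\geq c_3c'\sqrt{|J|}$, and you conclude the domination from the observation that $\log(a_{N,T})^{3/2}\geq 1$ and $(\log p\vee\log N)^{3/2}\geq 1$. That only shows $c'$ is at least some fixed number; it does not show $C\leq\tfrac{1}{2}c_3c'$, which is a relation between absolute constants (involving $\sigma^2$, $\phi_{\min}(\Gamma_{J,J})$ and $\phi_{\max}(\Gamma_{J,J})$) that can simply fail. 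The paper closes exactly this gap by choosing $a_{N,T}$ sufficiently large: $\enVert{v}$ and $\xi_{N,T}$ scale with $\log(a_{N,T})^{3/2}$ while the noise bound $C\sqrt{|J|}$ does not, so the factors $1-d_2\sqrt{NT}/(d_1\lambda_{N,T})$ and $1-d_2\sqrt{T}/(d_1\mu_{N,T})$ appearing in its final display can be made at least $1/2$. This is legitimate because $a_{N,T}$ is a free sequence in Theorem \ref{thm3}, and it is the correct resolution of the obstacle you flagged in your last paragraph; with that one-line modification your argument is complete.
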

Inequality (\ref{lower}) is the reverse inequality of (\ref{STbound}) and shows that one cannot improve the bounds in Theorem \ref{thm3} except for multiplicative constants. Hence, our results are sharp and we turn next towards the asymptotic implications of our finite sample bounds.

\section{Asymptotic properties of the Lasso}\label{asymLasso}
In this section we show that the Lasso can estimate $\beta^*$ and $c^*$ consistently in even very high-dimensional settings where the number of covariates increases exponentially in $N$. It is also shown that no relevant variables will be discarded from the model as long as $\beta_{\min}$ and $c_{\min}$ do not tend to zero too fast. Let $a_{N,T}=N, T=N^a, p=e^{N^b}$ and $s_1=s_2=N^{c}$ for $a,b,c\geq 0$. Then we have the following result which builds upon Theorem \ref{thm3}.

\begin{theorem}\label{LassoAsym}
Let assumptions A1) and A2b) be satisfied and assume that $\kappa^2$ is bounded away from zero. Then, if $9b+2c< 1$ as $N\to \infty$ one has with probability tending to one
\begin{enumerate}
	\item 
\begin{align*}
&\enVert[1]{\hat{\beta}-\beta^*}\to 0 \\
&\enVert[1]{\hat{c}-c^*}\to 0 \text{ if } 3b+c<a
\end{align*}
\item $\hat{\beta}_j$ will not be classified as zero for any $j\in J_1$ if $\beta_{\min}>\xi_{N,T}/\sqrt{NT}$ from a certain step and onwards. Similarly, no $\hat{c}_i$ will be classified as zero for $i\in J_2$ if $c_{\min}>\xi_{N,T}/\sqrt{T}$ from a certain step and onwards.
\end{enumerate}
 
\end{theorem}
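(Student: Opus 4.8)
The plan is to specialize the finite-sample statement of Theorem \ref{thm3} to the parametrization $a_{N,T}=N$, $T=N^a$, $p=e^{N^b}$, $s_1=s_2=N^c$ and then simply track exponents. Substituting $\log(a_{N,T})=\log(N)$ and $\log(p)=N^b$ into the definition of $\xi_{N,T}$ gives
\begin{align*}
\xi_{N,T}=\frac{16\log(N)^{3/2}N^{c/2}\del[1]{N^{3b/2}+\log(N)^{3/2}}}{\kappa^2},
\end{align*}
whose dominant behaviour (for $b>0$) is of order $\log(N)^{3/2}N^{3b/2+c/2}$. Dividing by $\sqrt{NT}=N^{(1+a)/2}$ and by $\sqrt{T}=N^{a/2}$ produces the candidate rates for the two estimation errors. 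The two things that must be checked are (i) that the probability lower bound of Theorem \ref{thm3} tends to one under $9b+2c<1$, and (ii) that these rates vanish.

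First I would verify that $P(\mathcal{A}_{N,T}\cap\mathcal{B}_{N,T})\to 1$. The first two terms, $Ap^{1-B\log(a_{N,T})}=A\exp\del[1]{N^b(1-B\log N)}$ and $AN^{1-B\ln(a_{N,T})}$, tend to zero for every $b\geq 0$ because $1-B\log N\to-\infty$. The delicate term is the one controlling $\mathcal{B}_{N,T}$, namely $A(p^2+Np)e^{-B(t^2N)^{1/3}}$. Here I would compute $t$ explicitly: since $\tfrac{\ln p}{\ln N}=\tfrac{N^b}{\ln N}$ eventually dominates its reciprocal, one gets $t=\kappa^2(\ln N)^3/(2N^{c+3b})$ and hence $(t^2N)^{1/3}$ of order $(\ln N)^2 N^{(1-2c-6b)/3}$. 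Since $\ln(p^2+Np)$ is of order $N^b$, the exponential beats the polynomial factor precisely when $(1-2c-6b)/3>b$, i.e. $9b+2c<1$. This is exactly where the hypothesis enters and is the main obstacle of the proof. Along the way I would also note that $9b+2c<1$ forces $6b+2c<1$, so $Nt^2\to\infty$ and the technical requirement $Nt^2\geq 1$ of Theorem \ref{thm3} holds from some step onwards.

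With the probability settled, the convergence statements in part (1) follow by inspecting exponents. For $\enVert[1]{\hat{\beta}-\beta^*}\leq \xi_{N,T}/\sqrt{NT}$ the exponent of $N$ is $3b/2+c/2-(1+a)/2$, which is negative whenever $3b+c<1+a$; a one-line optimization of the linear form $3b+c$ over $\cbr[0]{9b+2c<1,\ b,c\geq 0}$ shows that $9b+2c<1$ already forces $3b+c<1/2\leq 1+a$, so the $\beta$-error vanishes with no further assumption. For $\enVert[1]{\hat{c}-c^*}\leq \xi_{N,T}/\sqrt{T}$ the exponent is $3b/2+c/2-a/2$, which is negative exactly when $3b+c<a$, matching the stated side condition. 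In both cases the surviving $\log(N)^{3/2}$ factor is harmless since the power of $N$ is strictly negative.

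Finally, for the no-false-exclusion claim in part (2) I would argue on the event $\mathcal{A}_{N,T}\cap\mathcal{B}_{N,T}$, which by the above has probability tending to one. On this event the oracle inequality gives $\envert[0]{\hat{\beta}_j-\beta_j^*}\leq\enVert[1]{\hat{\beta}-\beta^*}\leq\xi_{N,T}/\sqrt{NT}$ for each coordinate, so for $j\in J_1$ the reverse triangle inequality yields $\envert[0]{\hat{\beta}_j}\geq\beta_{\min}-\xi_{N,T}/\sqrt{NT}>0$ whenever $\beta_{\min}>\xi_{N,T}/\sqrt{NT}$ from some step onwards; hence $\hat{\beta}_j$ is never classified as zero. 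The identical argument with $\sqrt{T}$ in place of $\sqrt{NT}$ and $c_{\min}$ in place of $\beta_{\min}$ handles $\hat{c}_i$ for $i\in J_2$. The only genuine work is the probability computation of the second paragraph; everything else is exponent bookkeeping together with an elementary inequality.
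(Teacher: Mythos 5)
Your proposal is correct and follows essentially the same route as the paper's own proof: both specialize Theorem \ref{thm3} to the parametrization $a_{N,T}=N$, $T=N^a$, $p=e^{N^b}$, $s_1=s_2=N^c$, show $P(\mathcal{A}_{N,T}\cap\mathcal{B}_{N,T})\to 1$ via the same computation $t^2N \sim N^{1-2c-6b}\ln(N)^6$ and the comparison of $N^b$ against $N^{(1-2c-6b)/3}$ (which is exactly where $9b+2c<1$ enters), then check that $\xi_{N,T}/\sqrt{NT}$ and $\xi_{N,T}/\sqrt{T}$ vanish under $3b+c<1+a$ and $3b+c<a$ respectively. Your part (2) argument (reverse triangle inequality on the high-probability event) is the contrapositive of the paper's contradiction argument and is the same idea; if anything, you are slightly more explicit than the paper in verifying the first two probability terms and in deriving $3b+c<1/2\leq 1+a$ from the constraint.
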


The first part of Theorem \ref{LassoAsym} shows that even when $p$ increases exponentially in $N$, it is possible for the Lasso to be consistent for $\beta^*$ as well as $c^*$. Put differently, the Lasso can be consistent in even ultra high-dimensional models. However, and as can be expected, one must have $a>0$ in order to estimate $c^*$ consistently since only $T=N^a$ observations are available to estimate each $c_i^*,\ i=1,...,N$. In the case of standard \textit{large N} asymptotics ($a=0$), the Lasso can still be consistent for $\beta^*$ as long as $9b+2c< 1$. This is clearly satisfied in the standard setting of fixed $p$, $s_1$ and $s_2$ ($b=c=0$).   

The second part of the theorem reveals that the Lasso can be used as a strong screening device since no relevant variables will be excluded from the model if their coefficients are not too close to zero. The necessity of such a "beta-min" (or "c-min") condition is not surprising since one cannot expect to be able to distinguish non-zero parameters from zero ones if the distance between these is too small. It is not difficult to see that in the standard large $N$ setting of $a=b=c=0$, the "beta-min" condition requires $\beta_{\min}\geq \log(N)^3/\sqrt{N}$. Hence, all non-zero parameters outside a disc centered at zero with radius $\log(N)^3/\sqrt{N}$ will also be classified as non-zero by the Lasso. In the same setting, the "c-min" condition requires $c_{\min}\geq \ln(N)^3$ implying that in the limit only $c_i\geq \ln(N)^3,\ i\in J_2$ can be guaranteed to be classified as non-zero. Put differently, only large $c^*_i$ can be guaranteed to be classified as non-zero. One must have $a>3b+c$ in order for this disc to have a radius which tends to zero, i.e. to make sure that any non-zero $c_i^*$ which is bounded away from zero will be classified as non-zero asymptotically. The necessity of the non-zero parameters being bounded away from zero is not surprising in the light of the work of \cite{potscherl09} who document some of the limitations of the Lasso-type estimators.

It is also worth mentioning that the conditions of Theorem \ref{LassoAsym} are merely sufficient. For example it is also possible to let $\kappa^2$ tend to zero at the price of slower growth rates in the other variables without sacrificing consistency. Furthermore, one could also use Theorem \ref{thm2} instead of Theorem \ref{thm3} to deduce a theorem in the spirit of Theorem \ref{LassoAsym}. Of course, the models sizes would no longer be allowed to increase as fast as above.

\section{The Adaptive Lasso}\label{ALasso}
So far we have focussed on deriving upper bounds on the estimation error that hold with high probability. Next, we turn to variable selection. Variable selection is important for policy makers since they might be interested in exactly those variables which influence the phenomenon they are modeling.

 The Lasso penalizes all parameters equally much. This implies that it can only recover the correct sparsity pattern under rather stringent assumptions. If one could penalize the truly zero parameters more than the non-zero ones, one would expect a better performance. This idea was utilized by \cite{zou06} to propose the adaptive Lasso in the standard linear regression model with a fixed number of non-random regressors. He established that the adaptive Lasso can detect the correct sparsity pattern \textit{asymptotically} in such a setting. This motivates us to modify the adaptive Lasso to make it applicable in the linear panel data model and to derive lower bounds on the finite sample probabilities with which it selects the correct sparsity pattern. The adaptive Lasso estimates $\beta^*$ and $c^*$ by minimizing the following objective function:

\begin{align}
\tilde{L}(\beta, c)
=
\sum_{i=1}^N\sum_{t=1}^T\del[1]{y_{i,t}-x_{i,t}'\beta-c_i}^2+\lambda_{N,T}\sum_{k\in J_1(\hat{\beta})}\frac{\envert[0]{\beta_k}}{\envert[0]{\hat{\beta}_k}}+\mu_{N,T}\sum_{i\in J_2(\hat{c})}\frac{\envert[0]{c_i}}{\envert[0]{\hat{c}_i}}\label{aLassoObj}
\end{align}
where $J_1(\hat{\beta})=\cbr[0]{j: \hat{\beta}_j\neq 0}$ and $J_2(\hat{c})=\cbr[0]{i:\hat{c}_i\neq 0}$. Denote the minimizers of $L$ by $\tilde{\beta}$ and $\tilde{c}$, respectively. Note that if $\hat{\beta}_j$ or $\hat{c}_i$ equal zero, the corresponding variable is entirely excluded from the model in the second step. Hence, the dimension of the second step estimation can be of a much smaller order of magnitude than the first step estimation. If $\beta^*_j=0$ then it follows by Theorems \ref{thm2} and \ref{thm3} that $\hat{\beta}_j$ is likely to be small (or even 0) and so the penalty on $\beta_j$ in (\ref{aLassoObj}) is large implying that $\tilde{\beta}$ is likely to be classified as being zero. The reverse logic applies when $\beta_j^*\neq 0$ (and similarly for $c_i^*$). Put differently, the adaptive Lasso is a two-step estimator which uses more intelligent weights than the ordinary Lasso. We shall see next, that these more intelligent weights imply that the adaptive Lasso can select the correct sparsity pattern. As for the Lasso, we start with a purely deterministic result to which we then attach probabilities by adding assumptions A1) and A2a) or A2b). First, define the sets
\begin{align*}
\mathcal{C}_{1,N,T}=\cbr[2]{\max_{k\in J_1^c}\max_{l\in J_1}\frac{1}{\sqrt{NT}}\sum_{i=1}^{N}\sum_{t=1}^Tx_{i,t,k}x_{i,t,l} \vee \max_{i\in J_2}\max_{k\in J_1^c}\frac{1}{\sqrt{T}}\sum_{t=1}^Tx_{i,t,k}\leq K_{1,N,T}
},\\
\mathcal{C}_{2,N,T}=\cbr[2]{\max_{i\in J_2^c}\max_{k\in J_1}\frac{1}{\sqrt{T}}\sum_{t=1}^Tx_{i,t,k}\leq K_{2,N,T}}
\text{ and }
\mathcal{D}_{N,T}=\cbr[1]{\phi_{\min}(\Psi_{J,J})\geq \phi_{\min}(\Gamma_{J,J})/2}.
\end{align*}
$\mathcal{C}_{1,N,T}$ may be interpreted as the set where none of the irrelevant $x_j$'s has a too big inner product (in $\ell_2$), or covariance, with any of the relevant $x_j$'s or dummies in $D$. Similarly $\mathcal{C}_{2,N,T}$ is the set where none of the relevant dummies is too highly correlated with any of the relevant $x_j$'s (all dummies are orthogonal in $\ell_2$ by construction so no condition is needed on their interdependence). On these sets, the problem is well-posed in the sense that the relevant and irrelevant variables are not too highly correlated and hence we can distinguish between them as we will see below. On the set $\mathcal{D}_{N,T}$, one basically has that $\Psi_{J,J}$ is bounded away from singularity. With these definitions in place we may state the following theorem.    

\begin{theorem}\label{aLassoThm1}
On $\mathcal{A}_{N,T}\cap\mathcal{C}_{1,N,T} \cap\mathcal{D}_{N,T}\cap\cbr[0]{\enVert[0]{\hat{\beta}-\beta^*}\leq \beta_{\min}/2}\cap\cbr[0]{\enVert[0]{\hat{c}-c^*}\leq c_{\min}/2}$ one has $\sgn(\tilde{\beta})=\sgn(\beta^*)$ if
\begin{align}
\frac{2\sqrt{|J|}}{\phi_{\min}(\Gamma_{J,J})}\del[3]{\frac{\lambda_{N,T}}{2\sqrt{NT}}\vee \frac{\mu_{N,T}}{2\sqrt{T}}+\frac{2\lambda_{N,T}}{\sqrt{NT}\beta_{\min}}\vee \frac{2\mu_{N,T}}{\sqrt{T}c_{\min}}}
&\leq
\sqrt{NT}\beta_{\min}\label{albeta1}\\
\frac{2|J|K_{1,N,T}}{\phi_{\min}(\Gamma_{J,J})}\del[3]{\frac{\lambda_{N,T}}{2\sqrt{NT}}\vee \frac{\mu_{N,T}}{2\sqrt{T}}+\frac{2\lambda_{N,T}}{\sqrt{NT}\beta_{\min}}\vee \frac{2\mu_{N,T}}{\sqrt{T}c_{\min}}}+\frac{\lambda_{N,T}}{2}
&\leq
\frac{\lambda_{N,T}}{\enVert[0]{\hat{\beta}-\beta^*}}\label{albeta2}.
\end{align}
Similarly, on $\mathcal{A}_{N,T}\cap\mathcal{C}_{2,N,T}\cap\mathcal{D}_{N,T}\cap\cbr[0]{\enVert[0]{\hat{\beta}-\beta^*}\leq \beta_{\min}/2}\cap\cbr[0]{\enVert[0]{\hat{c}-c^*}\leq c_{\min}/2}$
 one has $\sgn(\tilde{c})=\sgn(c^*)$ if
\begin{align}
\frac{2\sqrt{|J|}}{\phi_{\min}(\Gamma_{J,J})}\del[3]{\frac{\lambda_{N,T}}{2\sqrt{NT}}\vee \frac{\mu_{N,T}}{2\sqrt{T}}+\frac{2\lambda_{N,T}}{\sqrt{NT}\beta_{\min}}\vee \frac{2\mu_{N,T}}{\sqrt{T}c_{\min}}}
&\leq
\sqrt{T}c_{\min}\label{alc1}\\
\frac{2|J|K_{2,N,T}}{\phi_{\min}(\Gamma_{J,J})}\del[3]{\frac{\lambda_{N,T}}{2\sqrt{NT}}\vee \frac{\mu_{N,T}}{2\sqrt{T}}+\frac{2\lambda_{N,T}}{\sqrt{NT}\beta_{\min}}\vee \frac{2\mu_{N,T}}{\sqrt{T}c_{\min}}}+\frac{\mu_{N,T}}{2}
&\leq
\frac{\mu_{N,T}}{\enVert[0]{\hat{c}-c^*}}\label{alc2}.
\end{align}
\end{theorem}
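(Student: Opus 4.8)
The plan is to use the Karush--Kuhn--Tucker (KKT) characterization of the adaptive-Lasso minimizer together with a primal--dual witness construction, exploiting the block structure induced by the scaling matrix $S$. First I would record the consequences of the two first-step accuracy events in the conditioning set. On $\{\|\hat\beta-\beta^*\|\le\beta_{\min}/2\}$ every relevant coefficient satisfies $|\hat\beta_j|\ge\beta_{\min}/2>0$ for $j\in J_1$, so $J_1\subseteq J_1(\hat\beta)$ and the second step is free to estimate all truly non-zero $\beta$'s as non-zero; symmetrically $J_2\subseteq J_2(\hat c)$ on $\{\|\hat c-c^*\|\le c_{\min}/2\}$, and the weights $\lambda_{N,T}/|\hat\beta_j|$, $\mu_{N,T}/|\hat c_i|$ entering $w_J$ below are well defined and bounded. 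Moreover, for any irrelevant index $k\in J_1^c$ one has $|\hat\beta_k|=|\hat\beta_k-\beta_k^*|\le\|\hat\beta-\beta^*\|$, so $\lambda_{N,T}/|\hat\beta_k|\ge\lambda_{N,T}/\|\hat\beta-\beta^*\|$, which is exactly the quantity on the right-hand side of (\ref{albeta2}).

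Next I would construct the oracle (restricted) solution $\tilde\gamma$ supported on $J=J_1\cup J_2$: set $\tilde\gamma_{J^c}=0$ and let $\tilde\gamma_J$ solve the restricted stationarity equation $Z_J'Z_J(\tilde\gamma_J-\gamma_J^*)=Z_J'\epsilon-w_J/2$, where $w_J$ carries the weighted true signs ($\lambda_{N,T}\sgn(\beta_j^*)/|\hat\beta_j|$ on $J_1$, $\mu_{N,T}\sgn(c_i^*)/|\hat c_i|$ on $J_2$). Since $Z_J'Z_J=S_{J,J}\Psi_{J,J}S_{J,J}$, it is cleanest to work with the scaled deviation $u:=S_{J,J}(\tilde\gamma_J-\gamma_J^*)$, for which $u=\Psi_{J,J}^{-1}S_{J,J}^{-1}(Z_J'\epsilon-w_J/2)$. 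On $\mathcal D_{N,T}$ the factor $\|\Psi_{J,J}^{-1}\|\le 2/\phi_{\min}(\Gamma_{J,J})$ is controlled; $\mathcal A_{N,T}$ bounds each coordinate of $S_{J,J}^{-1}Z_J'\epsilon$ by $\frac{\lambda_{N,T}}{2\sqrt{NT}}\vee\frac{\mu_{N,T}}{2\sqrt T}$; and the first-step weight bounds control $S_{J,J}^{-1}w_J$ by $\frac{2\lambda_{N,T}}{\sqrt{NT}\beta_{\min}}\vee\frac{2\mu_{N,T}}{\sqrt T c_{\min}}$. Collecting these through $\|u\|\le\frac{2\sqrt{|J|}}{\phi_{\min}(\Gamma_{J,J})}(\cdots)$ reproduces the left-hand side of (\ref{albeta1}) and (\ref{alc1}). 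The role of the scaling is that the coordinate of $u$ for $j\in J_1$ equals $\sqrt{NT}(\tilde\beta_j-\beta_j^*)$ and for $i\in J_2$ equals $\sqrt T(\tilde c_i-c_i^*)$; hence $\|u\|\le\sqrt{NT}\beta_{\min}$, i.e.\ (\ref{albeta1}), forces $|\tilde\beta_j-\beta_j^*|\le\beta_{\min}\le|\beta_j^*|$ so that $\tilde\beta_j$ cannot cross zero and $\sgn(\tilde\beta_j)=\sgn(\beta_j^*)$, while $\|u\|\le\sqrt T c_{\min}$, i.e.\ (\ref{alc1}), does the same for $c$. This is the ``no relevant variable is dropped and no sign flips'' half of each claim.

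The second half is dual feasibility off the support, which I expect to be \emph{the main obstacle}. For $k\in J_1^c$ I must verify the inactive KKT inequality $2|x_k'(y-Z\tilde\gamma)|\le\lambda_{N,T}/|\hat\beta_k|$. Writing $y-Z\tilde\gamma=\epsilon+Z_J(\gamma_J^*-\tilde\gamma_J)$ and $\gamma_J^*-\tilde\gamma_J=-S_{J,J}^{-1}u$, the noise part is bounded via $\mathcal A_{N,T}$ and the ``leakage'' part is $|x_k'Z_JS_{J,J}^{-1}u|\le\|x_k'Z_JS_{J,J}^{-1}\|_{\ell_\infty}\|u\|_{\ell_1}$. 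Here $\mathcal C_{1,N,T}$ is precisely what bounds every coordinate of $x_k'Z_JS_{J,J}^{-1}$ (the scaled inner products of an irrelevant $x_k$ with the relevant $x_l$'s and with the relevant dummies) by $K_{1,N,T}$; combining with the bound on $\|u\|$ from the previous step yields the left-hand side of (\ref{albeta2}), which (\ref{albeta2}) caps at $\lambda_{N,T}/\|\hat\beta-\beta^*\|\le\lambda_{N,T}/|\hat\beta_k|$, forcing $\tilde\beta_k=0$. The argument for $i\in J_2^c$ is analogous but enjoys a structural simplification: since the dummy columns are mutually orthogonal, $d_i'Z_J(\gamma_J^*-\tilde\gamma_J)$ only picks up the relevant $x_l$-terms (the $d_i'd_{i'}$ cross terms vanish), so only $\mathcal C_{2,N,T}$ --- correlations of irrelevant dummies with relevant $x$'s --- is needed, giving (\ref{alc2}).

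Finally, on $\mathcal D_{N,T}$ the matrix $\Psi_{J,J}$ is invertible, so once the on-support and off-support conditions are verified the constructed $\tilde\gamma$ satisfies the full KKT system, is the unique adaptive-Lasso minimizer, and inherits the verified sign pattern. The delicate points throughout are (i) keeping the two scalings $\sqrt{NT}$ and $\sqrt T$ separate so the $\beta$- and $c$-blocks are compared against the correct thresholds $\sqrt{NT}\beta_{\min}$ and $\sqrt T c_{\min}$, and (ii) propagating the on-support bound on $\|u\|$ into the off-support inequalities with the correct powers of $|J|$ and $K_{\cdot,N,T}$; the exact multiplicative constants in (\ref{albeta1})--(\ref{alc2}) then fall out of this bookkeeping.
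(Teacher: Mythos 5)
Your proposal is correct and follows essentially the same route as the paper: its proof also runs a primal--dual witness/KKT argument (isolated there as Lemma \ref{wLassoLemma}) on the system rescaled by $S_{J,J}$, using $\mathcal{D}_{N,T}$ for the bound $\phi_{\min}(\Psi_{J,J})\geq \phi_{\min}(\Gamma_{J,J})/2$, $\mathcal{A}_{N,T}$ for the noise terms, the first-step accuracy events for the weights, and $\mathcal{C}_{1,N,T}$ (resp.\ $\mathcal{C}_{2,N,T}$) for off-support dual feasibility. The only differences are cosmetic --- you inline the KKT lemma and use vector H\"older/Cauchy--Schwarz bounds where the paper uses induced matrix norms --- and these produce the same conditions (\ref{albeta1})--(\ref{alc2}).
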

Note that, just as Theorem \ref{thm1}, Theorem \ref{aLassoThm1} is purely deterministic. Inequality (\ref{albeta1}) is sufficient to ensure that no relevant $x_j$'s are excluded from the model. It is sensible that the smaller $\beta_{\min}$ is the more difficult it is to avoid excluding relevant variables. This is reflected in (\ref{albeta1}) in that the left hand side is deceasing in $\beta_{\min}$ while the right hand side is increasing. Larger $\lambda_{N,T}$ and $\mu_{N,T}$ also make it harder to satisfy the inequality since too much shrinkage can result in relevant variables being discarded. On the other hand, as in Theorem \ref{thm1}, the size of $\mathcal{A}_{N,T}$ is increasing in these two quantities revealing the same tradeoff as discussed previously.

Inequality (\ref{albeta2}) gives a sufficient condition for not classifying any irrelevant $x_j$s as relevant. Note that the more precise the initial Lasso estimator is the larger is the right hand side and hence the more likely it is that the inequality is satisfied. Increasing $K_{1,N,T}$ allows for larger dependence between relevant and irrelevant variables and thus makes it harder to distinguish between these. Hence, it is sensible that the left hand side of (\ref{albeta2}) is increasing in $K_{1,N,T}$. On the other hand, the size of $\mathcal{C}_{1,N,T}$ is increasing in $K_{1,N,T}$. The intuition behind inequalities (\ref{alc1}) and (\ref{alc2}) is the same for the preceding two inequalities. At this point it is also worth mentioning that Theorem \ref{aLassoThm1} does not assume the use of the Lasso as initial estimator. The estimators $\hat{\beta}$ and $\hat{c}$ could be any estimators for which an upper bound on the estimation error is available and -- as can be seen -- more precise initial estimators will make the conditions of Theorem \ref{aLassoThm1} more likely to be satisfied. 

Next, we use the above theorem to give lower bounds on the probability with which the adaptive Lasso selects the correct sparsity pattern by invoking assumptions A1) and A2a) or A2b), respectively.  

\begin{corollary}\label{ALcor}
\begin{enumerate}
\item Let assumptions A1 and A2a) be satisfied and assume that (\ref{albeta1})-(\ref{albeta2}) are valid with $\lambda_{N,T}$ and $\mu_{N,T}$ as in Theorem \ref{thm2} and $K_{1,N,T}=|J_1^c|^{2/r}|J_1|^{2/r}(NT)^{1/2}a_{N,T}$. Assume that $\beta_{\min}\geq 2\frac{\xi_{N,T}}{\sqrt{NT}}$ and $c_{\min}\geq 2\frac{\xi_{N,T}}{\sqrt{T}}$ with $\xi_{N,T}$ as in Theorem \ref{thm2}. Then, $\sgn(\tilde{\beta})=\sgn(\beta^*)$ with probability at least $1-2\del{\frac{C_r}{a_{N,T}}}^r-D_r\frac{(p^2+Np)(s_1+s_2)^{r/2}\del[0]{\frac{p}{N}\vee \frac{N}{p}}}{\kappa^rN^{r/4}}-\frac{2}{a_{N,T}^{r/2}}$ for constants $C_r$ and $D_r$ only depending on $r$. Similarly, if (\ref{alc1})-(\ref{alc2}) are valid with $K_{2,N,T}=|J_1|^{1/r}|J_2^c|^{1/r}T^{1/2}a_{N,T}$ then $\sgn(\tilde{c})=\sgn(c^*)$ with probability at least $1-2\del{\frac{C_r}{a_{N,T}}}^r-D_r\frac{(p^2+Np)(s_1+s_2)^{r/2}\del[0]{\frac{p}{N}\vee \frac{N}{p}}}{\kappa^rN^{r/4}}-\frac{1}{a_{N,T}^{r/2}}$.
\item Let assumptions A1 and A2b) be satisfied and assume that (\ref{albeta1})-(\ref{albeta2}) are valid with $\lambda_{N,T}$ and $\mu_{N,T}$ as in Theorem \ref{thm3} and $K_{1,N,T}=A\log(1+|J_1^c|)\log(e+|J_1|)\sqrt{NT}\log(a_{N,T})$ for $A>0$. Assume that $\beta_{\min}\geq 2\frac{\xi_{N,T}}{\sqrt{NT}}$ and $c_{\min}\geq 2\frac{\xi_{N,T}}{\sqrt{T}}$ with $\xi_{N,T}$ as in Theorem \ref{thm3}. Then, $\sgn(\tilde{\beta})=\sgn(\beta^*)$ with probability at least $1-Ap^{1-B\log(a_{N,T})}-AN^{1-B\ln(a_{N,T})}-A(p^2+Np)e^{-B(t^2N)^{1/3}}-\frac{4}{a_{N,T}}$ for absolute constants $A$ and $B$ and $t=\frac{\kappa^2}{(s_1+s_2)\del[1]{\frac{
\ln(p)}{\ln(N)}\vee \frac{\ln(N)}{\ln(p)}}^3}$ as long as $Nt^2\geq 1$. Similarly, if (\ref{alc1})-(\ref{alc2}) are valid with $K_{2,N,T}=A\log(1+|J_1|)\log(1+|J_2^c|)\sqrt{T}\log(a_{N,T})$ then $\sgn(\tilde{c})=\sgn(c^*)$ with probability at least $1-Ap^{1-B\log(a_{N,T})}-AN^{1-B\ln(a_{N,T})}-A(p^2+Np)e^{-B(t^2N)^{1/3}}-\frac{2}{a_{N,T}}$.     
\end{enumerate}
\end{corollary}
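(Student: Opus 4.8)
The plan is to reduce the corollary to the purely deterministic Theorem \ref{aLassoThm1} by lower-bounding the probability of its conditioning event. Since the algebraic inequalities (\ref{albeta1})--(\ref{albeta2}) (resp. (\ref{alc1})--(\ref{alc2})) are \emph{assumed} to hold, Theorem \ref{aLassoThm1} delivers $\sgn(\tilde\beta)=\sgn(\beta^*)$ (resp. $\sgn(\tilde c)=\sgn(c^*)$) on
\[
\mathcal{A}_{N,T}\cap\mathcal{C}_{1,N,T}\cap\mathcal{D}_{N,T}\cap\{\|\hat\beta-\beta^*\|\le\beta_{\min}/2\}\cap\{\|\hat c-c^*\|\le c_{\min}/2\},
\]
so it only remains to bound this event's probability from below by the quantity stated in the corollary. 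I would do this under A2a) and A2b) in parallel, feeding in the probability bounds of Theorems \ref{thm2} and \ref{thm3} respectively.

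First I would dispose of the two estimation-error events at no probabilistic cost. On $\mathcal{A}_{N,T}\cap\mathcal{B}_{N,T}$, Theorem \ref{thm2} (resp. \ref{thm3}) gives $\|\hat\beta-\beta^*\|\le\xi_{N,T}/\sqrt{NT}$ and $\|\hat c-c^*\|\le\xi_{N,T}/\sqrt T$; combined with the standing hypotheses $\beta_{\min}\ge 2\xi_{N,T}/\sqrt{NT}$ and $c_{\min}\ge 2\xi_{N,T}/\sqrt T$ this yields the deterministic inclusions $\mathcal{A}_{N,T}\cap\mathcal{B}_{N,T}\subseteq\{\|\hat\beta-\beta^*\|\le\beta_{\min}/2\}$ and $\mathcal{A}_{N,T}\cap\mathcal{B}_{N,T}\subseteq\{\|\hat c-c^*\|\le c_{\min}/2\}$. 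Next I would argue that $\mathcal{D}_{N,T}$ needs no separate term: by Weyl's inequality $|\phi_{\min}(\Psi_{J,J})-\phi_{\min}(\Gamma_{J,J})|\le\|\Psi_{J,J}-\Gamma_{J,J}\|$, and since $J\times J$ is a submatrix of the relevant block, the entrywise closeness of $\Psi_{N,T}$ to $\Gamma$ established in Lemmas \ref{BboundLp}/\ref{Bboundexp} (and used to control $\mathcal{B}_{N,T}$ through Lemma \ref{vdGB}) simultaneously forces $\phi_{\min}(\Psi_{J,J})\ge\phi_{\min}(\Gamma_{J,J})/2$, because $\kappa^2$ and $\phi_{\min}(\Gamma_{J,J})$ are comparable bounded-below constants. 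Hence the bound on $P(\mathcal{B}_{N,T}^c)$ from Theorems \ref{thm2}/\ref{thm3} already covers $\mathcal{A}_{N,T}\cap\mathcal{B}_{N,T}\cap\mathcal{D}_{N,T}$.

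The genuinely new ingredient is the bound on $P(\mathcal{C}_{1,N,T}^c)$ (resp. $P(\mathcal{C}_{2,N,T}^c)$), which is precisely the surplus term $2/a_{N,T}^{r/2}$ (resp. $4/a_{N,T}$) in the corollary. I would first split each sum into its centered and deterministic parts, e.g. $\frac{1}{\sqrt{NT}}\sum_{i,t}x_{i,t,k}x_{i,t,l}=\frac{1}{\sqrt{NT}}\sum_{i,t}(x_{i,t,k}x_{i,t,l}-Ex_{i,t,k}x_{i,t,l})+\frac{1}{\sqrt{NT}}\sum_{i,t}Ex_{i,t,k}x_{i,t,l}$; the normalization $\max_tE|x_{1,t,k}|^r\le1$ and Cauchy--Schwarz bound the deterministic part by $\sqrt{NT}$, which is dominated by $K_{1,N,T}$, so it suffices to control the centered part. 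Under A1), the centered products are independent across $i$ and, for fixed $i$, form martingale differences in $t$ with respect to the natural filtration---exactly as for the entries of $X'\epsilon$ in the proofs of Theorems \ref{thm2} and \ref{thm3}---so I would reuse that machinery. Under A2a) I would apply Rosenthal's inequality (with the constant of \cite{hitczenko90}), noting that products of two $r$-moment variables have $r/2$ moments by Cauchy--Schwarz, and take a union bound over the $|J_1^c||J_1|$ (resp. $|J_2||J_1^c|$) pairs; the union factor is exactly absorbed by the $|J_1^c|^{2/r}|J_1|^{2/r}$ in $K_{1,N,T}$, leaving failure probability $O(a_{N,T}^{-r/2})$. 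Under A2b) the products are merely sub-exponential, so I would instead invoke the unbounded-martingale concentration inequality of \cite{lesignev01}, whose logarithmic threshold is the source of the $\log(1+|J_1^c|)\log(e+|J_1|)$ factors in $K_{1,N,T}$, with surplus probability $O(a_{N,T}^{-1})$.

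Finally I would assemble via a union bound, $P(\text{conditioning event})\ge 1-P(\mathcal{A}_{N,T}^c)-P(\mathcal{B}_{N,T}^c\cup\mathcal{D}_{N,T}^c)-P(\mathcal{C}_{1,N,T}^c)$, substitute the three bounds above, and conclude through Theorem \ref{aLassoThm1}; the $\mathcal{C}_{2,N,T}$/$\tilde c$ statement is identical with $K_{2,N,T}$ in place of $K_{1,N,T}$. The main obstacle I anticipate is the control of $\mathcal{C}_{1,N,T}$: the cross-products $x_{i,t,k}x_{i,t,l}$ are neither independent over $t$ nor sub-gaussian (only sub-exponential, or possessing merely $r/2$ moments), so the standard i.i.d. tools fail and one must verify the martingale structure and track how the union bound over the many irrelevant-by-relevant pairs trades off against the tail, so that the exponents $2/r$ (resp. the logarithmic factors) built into $K_{1,N,T}$ are exactly what keeps the surplus probability at $O(a_{N,T}^{-r/2})$ (resp. $O(a_{N,T}^{-1})$).
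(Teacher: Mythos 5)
Your overall skeleton is exactly the paper's: reduce to Theorem \ref{aLassoThm1}, absorb the two estimation-error events via Theorems \ref{thm2}/\ref{thm3} together with the $\beta_{\min}$, $c_{\min}$ hypotheses, absorb $\mathcal{D}_{N,T}$ into the same entrywise-closeness event $\tilde{\mathcal{B}}_{N,T}$ that controls $\mathcal{B}_{N,T}$ (your Weyl argument is fine once you note that $\kappa^2\leq\phi_{\min}(\Gamma_{J,J})$ holds automatically, because vectors supported on $J$ satisfy the cone constraint -- no appeal to ``comparable constants'' is needed), and finish with a union bound plus a new lemma for $\mathcal{C}_{1,N,T}$ and $\mathcal{C}_{2,N,T}$. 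The gap is in that new lemma.

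You claim that, under A1, the centered cross-products $x_{i,t,k}x_{i,t,l}-E(x_{i,t,k}x_{i,t,l})$ form martingale differences in $t$ ``exactly as for the entries of $X'\epsilon$'', and you propose to run Rosenthal (under A2a) or Lesigne--Voln\'y (under A2b) over the double sum. This is false: the martingale structure exploited for $X'\epsilon$ comes entirely from the error terms -- A1c) makes $\{\epsilon_{i,t}\}_t$ a martingale difference sequence and A1b) makes it independent of $X_i$ -- whereas A1 places \emph{no} restriction whatsoever on the temporal dependence of the covariates themselves. Take $x_{i,t,k}=x_{i,1,k}$ for all $t$ (perfectly persistent regressors): A1 and A2 hold, but $E[x_{i,t,k}x_{i,t,l}\mid\sigma(x_{i,s}:s<t)]=x_{i,1,k}x_{i,1,l}\neq E[x_{i,t,k}x_{i,t,l}]$, so the centered products are not martingale differences in $t$ and the concentration step over $t$ has no justification. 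The paper avoids this entirely: in Lemmas \ref{CLp} and \ref{Cexp} there is no centering and no concentration at all. Because the thresholds $K_{1,N,T},K_{2,N,T}$ are of order $\sqrt{NT}\,a_{N,T}$ (resp.\ $\sqrt{NT}\log(a_{N,T})$ times log factors), a crude triangle-inequality bound $\|\frac{1}{\sqrt{NT}}\sum_{i,t}x_{i,t,k}x_{i,t,l}\|_{L_{r/2}}\leq\sqrt{NT}$ (by Cauchy--Schwarz and the normalization in A2a), resp.\ the analogous $\psi_1$-Orlicz bound under A2b), followed by the maximal inequality over the finitely many pairs (the $|J_1^c|^{2/r}|J_1|^{2/r}$ factors, resp.\ the $\log(1+|J_1^c|)\log(e+|J_1|)$ factors via Lemma 2.2.2 of \cite{vdVW96}) and Markov's inequality, already yields exactly $2/a_{N,T}^{r/2}$, resp.\ $4/a_{N,T}$. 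This also pinpoints a second misattribution: the logarithmic factors in $K_{1,N,T}$ under A2b) come from the Orlicz-norm maximal inequality, not from the ``logarithmic threshold'' of \cite{lesignev01}; had you pushed the Lesigne--Voln\'y route with a union bound over pairs, the surplus probability would take the form $A|J_1^c||J_1|e^{-B(\cdot)^{1/3}}$, which is not the clean, dimension-free $4/a_{N,T}$ the corollary asserts and would only vanish under additional growth restrictions relating $p$ and $N$.
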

Corollary \ref{ALcor} gives lower bounds on the probability with which the adaptive Lasso detects the correct sparsity pattern under the two sets of assumptions employed in Theorems \ref{thm2} and \ref{thm3}, respectively. Hence, returning once again to the ability example from the introduction, the adaptive Lasso is able to detect the group of workers whose unobserved ability influences their wages ($c_i^*\neq 0$). By a union bound Corollary \ref{ALcor} can also be used to derive a crude lower bound on $P(\sgn(\tilde{\beta})=\sgn(\beta^*), \sgn(\tilde{c})=\sgn(c^*))$. A tighter bound can be derived by optimizing the proof slightly. 

In order to get a feeling for the size of the models that the adaptive Lasso can detect the correct sparsity pattern in, we shall use part (2) of the Corollary \ref{ALcor} to establish the following asymptotic result. As in Theorem \ref{LassoAsym} we shall consider the asymptotic setting where $a_{N,T}=N, T=N^a, p=e^{N^b}$ and $s_1=s_2=N^{c}$ for $a,b,c\geq 0$.

\begin{theorem}\label{aLassoAsym}
Let assumptions A1 and A2b) be satisfied and let $\kappa$,  $\beta_{\min}$ and $c_{\min}$ be bounded away from 0. Assume furthermore, that $9b+2c< 1$. Then,
\begin{enumerate}
	\item $P\del[1]{\sgn(\tilde{\beta})=\sgn(\beta^*)}\to 1$ if $5b+3c<1+a$
	\item $P\del[1]{\sgn(\tilde{c})=\sgn(c^*)}\to 1$ if $6b+3c<a$.
\end{enumerate}
\end{theorem}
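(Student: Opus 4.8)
The plan is to obtain Theorem~\ref{aLassoAsym} from part~(2) of Corollary~\ref{ALcor} by checking that its hypotheses are eventually satisfied under the stated growth rates and that the probability lower bounds it delivers converge to one. Throughout I substitute $a_{N,T}=N$, $T=N^{a}$, $p=e^{N^{b}}$, $s_{1}=s_{2}=N^{c}$, so that $\log(a_{N,T})=\log N$ and $\log p=N^{b}$, and I use that $\kappa,\beta_{\min},c_{\min}$ are bounded away from zero; since every $\delta$ supported on $J$ is feasible in the definition of $\kappa^{2}$, one also has $\phi_{\min}(\Gamma_{J,J})\ge\kappa^{2}$, so $\phi_{\min}(\Gamma_{J,J})$ stays bounded away from zero. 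These substitutions give, up to logarithmic factors, $\lambda_{N,T}\asymp N^{(1+a+3b)/2}$, $\mu_{N,T}\asymp N^{a/2}$, $\xi_{N,T}\asymp N^{(3b+c)/2}$, $K_{1,N,T}\asymp N^{b+(1+a)/2}$, $K_{2,N,T}\asymp N^{a/2}$ and $|J|\asymp N^{c}$. The threshold hypotheses $\beta_{\min}\ge 2\xi_{N,T}/\sqrt{NT}$ and $c_{\min}\ge 2\xi_{N,T}/\sqrt{T}$ reduce to $\xi_{N,T}/\sqrt{NT}\to 0$ and $\xi_{N,T}/\sqrt{T}\to 0$, i.e. to $3b+c<1+a$ and $3b+c<a$; the first is implied by both parts' selection conditions and the second by $6b+3c<a$, so I would verify them alongside the selection inequalities below.

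Next I would show the supplied lower bound tends to one, which is where $9b+2c<1$ enters. The terms $4/N$, $AN^{1-B\ln N}$ and $Ap^{1-B\log N}=A\exp\{N^{b}(1-B\log N)\}$ all vanish because $\log N\to\infty$. The delicate term is $A(p^{2}+Np)e^{-B(t^{2}N)^{1/3}}$. For $b>0$ one gets $t\asymp(\log N)^{3}N^{-(3b+c)}$, hence $t^{2}N\asymp(\log N)^{6}N^{1-6b-2c}$, which diverges, so the side condition $Nt^{2}\ge 1$ holds, precisely because $6b+2c<1$. Since $p^{2}+Np\asymp e^{2N^{b}}$, this term is of order $\exp\{2N^{b}-B(\log N)^{2}N^{(1-6b-2c)/3}\}$, whose exponent tends to $-\infty$ exactly when $(1-6b-2c)/3>b$, that is, when $9b+2c<1$; the boundary case $b=0$ is analogous and easier. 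Hence the full lower bound converges to one in both parts.

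It then remains to verify the deterministic selection inequalities. For part~(1) I insert the orders above into \eqref{albeta1}--\eqref{albeta2}, replacing $\|\hat\beta-\beta^{*}\|$ in \eqref{albeta2} by the Theorem~\ref{thm3} bound $\xi_{N,T}/\sqrt{NT}$, which is legitimate since the right-hand side of \eqref{albeta2} is decreasing in $\|\hat\beta-\beta^{*}\|$. After identifying the dominant branch of each maximum (for $b>0$ the $\lambda_{N,T}$-branches dominate), \eqref{albeta1} reduces to $N^{(c+3b)/2}\lesssim N^{(1+a)/2}$, i.e. $3b+c<1+a$, while \eqref{albeta2} collapses after cancellation to $5b+3c<1+a$, the binding condition that also implies the former. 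For part~(2) the same computation turns \eqref{alc1} into $3b+c<a$ and, using $\|\hat c-c^{*}\|\le\xi_{N,T}/\sqrt{T}$, turns \eqref{alc2} into $6b+3c<a$, again the binding condition. In both parts the additive penalty terms $\lambda_{N,T}/2$ and $\mu_{N,T}/2$ on the left of \eqref{albeta2} and \eqref{alc2} are dominated by the corresponding right-hand sides because $\xi_{N,T}/\sqrt{NT}\to 0$ and $\xi_{N,T}/\sqrt{T}\to 0$.

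The main obstacle is the exponent bookkeeping of the last two paragraphs. One must correctly determine which branch of each $\vee$ dominates, since this can switch between the $\beta$- and $c$-penalty branches according to the sign of $b$ and the size of $a$, and one must track the several logarithmic factors to be sure they never reverse a strict polynomial-order inequality. The crux is the substitution of the first-step error bound into \eqref{albeta2} and \eqref{alc2} and checking that the resulting exponent arithmetic collapses exactly to $5b+3c<1+a$ and $6b+3c<a$; together with confirming that both first-step estimators are consistent in the norm needed for the threshold hypotheses, this is the only nonroutine part, the remainder being direct comparisons of polynomial orders.
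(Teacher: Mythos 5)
Your proposal follows the same route as the paper's own proof: substitute the asymptotic rates into part (2) of Corollary \ref{ALcor}, show the probability lower bound tends to one via the term $A(p^2+Np)e^{-B(t^2N)^{1/3}}$ (which is where $9b+2c<1$ enters, exactly as in the proof of Theorem \ref{LassoAsym} that the paper invokes at this point), and verify (\ref{albeta1})--(\ref{albeta2}) and (\ref{alc1})--(\ref{alc2}) by exponent bookkeeping after replacing $\|\hat\beta-\beta^*\|$ and $\|\hat c-c^*\|$ by the bounds $\xi_{N,T}/\sqrt{NT}$ and $\xi_{N,T}/\sqrt{T}$ valid on $\mathcal{A}_{N,T}\cap\mathcal{B}_{N,T}$. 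Your dominant-branch arithmetic agrees with the paper's: (\ref{albeta1}) reduces to $3b+c<1+a$ (plus the weaker $c<1+a$ from the $\mu_{N,T}$ branch), (\ref{albeta2}) to $5b+3c<1+a$ (with the $\mu_{N,T}/\lambda_{N,T}$ branch giving only $2b+3c<1+a$), and (\ref{alc1})--(\ref{alc2}) to $3b+c<a$ and $6b+3c<a$; your observation that $\phi_{\min}(\Gamma_{J,J})\geq\kappa^2$, because vectors supported on $J$ are feasible in the definition of $\kappa^2$, is a correct and genuinely needed detail that the paper leaves implicit.

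The one substantive point concerns the hypothesis $c_{\min}\geq 2\xi_{N,T}/\sqrt{T}$, which Corollary \ref{ALcor} imposes for \emph{both} of its conclusions, including $\sgn(\tilde\beta)=\sgn(\beta^*)$: it is what guarantees the event $\{\|\hat c-c^*\|\leq c_{\min}/2\}$, a set that Theorem \ref{aLassoThm1} genuinely needs in order to control the adaptive weights $\mu_{N,T}/|\hat c_j|$, $j\in J_2$, inside $r_J$. You correctly reduce this hypothesis to $3b+c<a$, but you then cover it only by part (2)'s assumption $6b+3c<a$; part (1) assumes only $5b+3c<1+a$ and $9b+2c<1$, which do not imply $3b+c<a$ (take $a=b=0$ and $c>0$, where $\xi_{N,T}/\sqrt{T}\asymp\log(N)^3N^{c/2}\to\infty$, so the hypothesis fails for any fixed $c_{\min}$). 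As written, therefore, your plan does not complete part (1) in that regime. You should know, however, that the paper's own proof has exactly the same lacuna: it verifies (\ref{albeta1}), (\ref{albeta2}), $\beta_{\min}\geq 2\xi_{N,T}/\sqrt{NT}$ and the convergence of the probability bound, but never checks $c_{\min}\geq 2\xi_{N,T}/\sqrt{T}$ for part (1). So your more explicit accounting is not a defect relative to the paper; it merely makes visible that part (1) as stated (in particular the fixed-$T$ case $a=0$ highlighted in the discussion following the theorem) does not follow from Corollary \ref{ALcor} without either adding the condition $3b+c<a$ or reproving Theorem \ref{aLassoThm1} so that the $\tilde\beta$ statement does not require a lower bound on $|\hat c_j|$, $j\in J_2$.
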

Part one of Theorem \ref{aLassoAsym} reveals that $p$ may increase at a sub-exponential rate while the number of relevant variables must increase slower than the square root of the sample size (set $b=0$ in $9b+2c<1$ to conclude that $c<1/2$) if the adaptive Lasso is to detect the correct sparsity pattern asymptotically. Actually, for $a<1/2$ the number of relevant variables must increase even slower. It is also worth noticing that sign consistency can be achieved in a fixed $T$ setting ($a=0$). This is in opposition to part 2 of the theorem: for the adaptive Lasso to be sign consistent for $c^*$ one needs $a>0$. This is of course sensible in the light of Theorem \ref{LassoAsym} since $a>0$ is needed for the first step Lasso estimator $\hat{c}$ to be consistent for $c^*$.

\section{Monte Carlo}\label{MC}
In this section we investigate the finite sample properties of the Lasso as well as the adaptive Lasso by means of Monte Carlo experiments. The Lasso is implemented using the publicly available \texttt{glmnet} package for R. Since $\mu_{N,T}/\lambda_{N,T}$ is roughly equal to $1/\sqrt{N}$ in Theorems \ref{thm2} and \ref{thm3} we can reduce the optimization problem to a search over only one tuning parameter in the following way:
\begin{enumerate}
\item Define $\tilde{D}=\sqrt{N}D$.
\item Minimize $\enVert[1]{y-X\beta-\tilde{D}c}^2+\lambda_{N,T}\sum_{k=1}^p\envert[0]{\beta_k}+\lambda_{N,T}\sum_{i=1}^N\envert[0]{c_i}$ wrt. $(\beta, c)$ by \texttt{glmnet} and denote the minimizer by $(\hat{\beta}, \hat{\hat{c}})$.
\item Return $(\hat{\beta}, \hat{c})=(\hat{\beta}, \sqrt{N}\hat{\hat{c}})$.
\end{enumerate} 

In step 2 above $\lambda_{N,T}$ is chosen by BIC. It is our experience that more time consuming procedures such as cross validation do not improve the results. The adaptive Lasso is implemented in the following way:
\begin{enumerate}
\item Define $\tilde{x}_j=x_{j}\hat{\beta}_j\ j=1,...,p$ and $\tilde{d}_i=\sqrt{N}\hat{c}_id_i,\ i=1,...,N$.
\item Minimize $\enVert[1]{y-\sum_{j=1}^p\tilde{x}_j\beta-\sum_{i=1}^N\tilde{d}_ic_i}^2+\lambda_{N,T}\sum_{k=1}^p\envert[0]{\beta_k}+\lambda_{N,T}\sum_{i=1}^N\envert[0]{c_i}$ wrt. $(\beta, c)$ by \texttt{glmnet} and denote the minimizer by $(\tilde{\tilde{\beta}}, \tilde{\tilde{c}})$.
\item Return $\tilde{\beta}_j=\tilde{\tilde{\beta}}_j\hat{\beta}_j, j=1,...,p$ and $\tilde{c}_i=\sqrt{N}\hat{c}_i\tilde{\tilde{c}}_i$.
\end{enumerate}
As for the Lasso, $\lambda_{N,T}$ is chosen by BIC. The above implementation of  the adaptive Lasso is similar in spirit to the one described in \cite{zou06}. To provide a benchmark for the Lasso and the adaptive Lasso, least squares including all variables is also implemented whenever feasible. This procedure is denoted OLSA. At the other extreme, least squares \textit{only} including the relevant variables is applied to provide an infeasible target which we are ideally aiming at. This procedure is called the OLS Oracle (OLSO). We measure the performance of the proposed estimators along the following dimensions

\begin{enumerate}
\item The average root mean square error of the parameter estimates of $\beta^*$ and $c^*$, i.e. the average $\ell_2$-estimation error.
\item How often is the true model included in the model chosen. This is relevant since even if the true model is not selected a good procedure should not exclude too many relevant variables. This measure is reported for $\beta^*$ as well as $c^*$.
\item How often is the correct sparsity pattern uncovered, i.e. how often is exactly the correct model chosen. This measure is reported for $\beta^*$ as well as $c^*$.
\item What is the mean number of non-zero parameters in the estimated model. This measures how much the dimension of the model is reduced and is reported for $\beta^*$ as well as $c^*$.
\end{enumerate}

The following experiments are carried out to gauge the performance along the above dimensions (the number of Monte Carlo replications is always 1000).
\begin{itemize}
\item Experiment A: N=T=10 with $\beta^*$ having five entries of 1 and 20 of zero. The non-zero entries are equidistant. $c^*$ has $floor(N^{1/3})=2$ entries of 1 and the rest zeros. The correlation between the $i$th and $j$th column of $X$ is $0.75^{|i-j|}$ and the covariates and error terms possess two moments only. 
\item Experiment B: As experiment A but with $N=100$ and $c^*$ having  $floor(N^{1/3})=4$. 
\item Experiment C: As experiment A but with $T=100$.
\item Experiment D: As experiment A but with gaussian covariates.
\item Experiment E: As experiment B but with gaussian covariates.	
\item Experiment F: As experiment C but with gaussian covariates.
\item Experiment G: As experiment A but now $\beta^*$ has five entries of one and 245 entries of zero. The non-zero entries are equidistant.
\item Experiment H: As experiment G but with gaussian covariates and error terms.
\item Experiment I:  N=T=10 with $\beta^*$ having 10 entries of 1 and 490 of zero. The non-zero entries are equidistant. $c^*$ has $floor(N^{1/3})=2$ entries of 1 and the rest zeros. The correlation between the $i$th and $j$th column of $X$ is $0.75^{|i-j|}$ and the covariates and error terms are gaussian. 
\end{itemize}

Experiments A-C are meant to illustrate Theorem \ref{thm2} and part 1 of Corollary \ref{ALcor}. Note that tails of the covariates and the error terms are extremely heavy in these experiments since they merely allow for the existence of two moments. Similarly, Experiments D-F are meant to illustrate Theorem \ref{thm3} and part 2 of Corollary \ref{ALcor} as the tails of the covariates and error terms are now subgaussian (in fact they are exactly gaussian) allowing the existence of all (polynomial) moments. Experiments G-H intend to investigate the performance of the Lasso and the adaptive Lasso in settings with more variables than observations and various moment assumptions on the covariates and the error terms. 

\subsection{Results}

\begin{table}[ht]
\centering
\begin{tabular}{cccccccccc}
  \toprule
& & MSE($\beta$) & MSE($c$) & Sub($\beta$) & Sub($c$) & Spar($\beta$) & Spar($c$) & $\#\beta$ & $\#c$ \\ 
  \midrule
\multirow{4}{*}{\begin{sideways}Exp A\end{sideways}} 
& Lasso & 1.02 & 1.16 & 0.87 & 0.42 & 0.01 & 0.09 & 9.17 & 2.47 \\ 
 & ALasso & 0.87 & 1.31 & 0.75 & 0.34 & 0.31 & 0.13 & 5.90 & 1.84 \\ 
 & OLSO & 0.39 & 0.64 & 1.00 & 1.00 & 1.00 & 1.00 & 5.00 & 2.00 \\ 
 & OLSA & 2.04 & 1.89 & 1.00 & 1.00 & 0.00 & 0.00 & 25.00 & 10.00 \\ 
\midrule
\multirow{4}{*}{\begin{sideways}Exp B\end{sideways}} 
 & Lasso & 0.33 & 2.15 & 1.00 & 0.00 & 0.04 & 0.00 & 8.06 & 2.26 \\ 
 & ALasso & 0.14 & 2.83 & 1.00 & 0.00 & 0.89 & 0.00 & 5.12 & 2.02 \\ 
 & OLSO & 0.12 & 0.97 & 1.00 & 1.00 & 1.00 & 1.00 & 5.00 & 4.00 \\ 
 & OLSA & 0.54 & 5.45 & 1.00 & 1.00 & 0.00 & 0.00 & 25.00 & 100.00 \\  
\midrule
\multirow{4}{*}{\begin{sideways}Exp C\end{sideways}} 
 & Lasso & 0.29 & 0.43 & 1.00 & 0.99 & 0.02 & 0.49 & 9.08 & 2.72 \\ 
 & ALasso & 0.14 & 0.28 & 1.00 & 0.99 & 0.91 & 0.84 & 5.11 & 2.16 \\ 
 & OLSO & 0.12 & 0.22 & 1.00 & 1.00 & 1.00 & 1.00 & 5.00 & 2.00 \\ 
 & OLSA & 0.51 & 0.54 & 1.00 & 1.00 & 0.00 & 0.00 & 25.00 & 10.00 \\ 
   \bottomrule
\end{tabular}
\caption{\small MSE($\beta$) and MSE($c$) are the average root mean square errors of the parameter estimates. Sub($\beta$) and Sub($c$) indicate the fraction of times the estimated model contains all the relevant variables (in $X$ and $D$) while Spar($\beta$) and Spar($c$) show how often exactly the correct subset of variables is chosen. Finally, $\#\beta$ and $\# c$ give the average number of non-zero $\beta$s and $c$s, respectively.}
\end{table}
 	 
Experiment A reveals that the Lasso as well as the adaptive Lasso estimate $\beta^*$ and $c^*$ at a precision which lies in between the one of least squares including all variables and the least squares oracle. The adaptive Lasso retains all non-zero $\beta^*$s in $75\%$ of the instances while only including 5.9 variables on average (recall that there are 5 relevant variables).

Increasing $N$ to 100, Experiment B shows that $\beta^*$ is now estimated more precisely while the opposite is the case for $c^*$. It is to be expected, however, that the mean square error of $\hat{c}$ increases since the vector now has 100 entries to be estimated as opposed to only 10 in Experiment A. The adaptive Lasso always retains all non-zero $\beta^*$s while detecting exactly the right sparsity pattern in $89\%$ of the cases. This is never the case for $c^*$, the reason being the same as mentioned above.

In Experiment C, $T$ is increased to 100 while $N=10$. This results in a higher precision of all estimators. In particular, the adaptive Lasso estimates $\beta^*$ and $c^*$ almost as precisely as the least squares oracle. The number of selected variables is also close to the ideal number.

Experiments D-F use gaussian covariates and error terms instead of ones with only two moments. Comparing the results to those in Experiments A-C reveals that the Lasso and the adaptive Lasso perform better now. Note for example, in Experiment D, the adaptive Lasso does not estimate $\beta^*$ much less precisely than the least squares oracle while in Experiment A it was more than twice as imprecise. Furthermore, all non-zero $c^*$ are classified as such by the Lasso in $81\%$ of the Monte Carlo replications while in the corresponding number in Experiment A was only $42\%$.

Moving from Experiment D to E all measures pertaining to $\beta^*$ improve -- the parameters are estimated more precisely (the adaptive Lasso is actually as precise as the least squares oracle) and the correct sparsity pattern is selected more than 9 out of ten times. As can be expected all measures pertaining to $c^*$ worsen since the number of parameters to be estimated ten-doubles.

In Experiment F, the Lasso and the adaptive Lasso perform well along all dimensions.

\begin{table}[ht]
\centering
\begin{tabular}{cccccccccc}
  \toprule
& & MSE($\beta$) & MSE($c$) & Sub($\beta$) & Sub($c$) & Spar($\beta$) & Spar($c$) & $\#\beta$ & $\# c$ \\ 
  \midrule
\multirow{4}{*}{\begin{sideways}Exp D\end{sideways}} 
 & Lasso & 0.57 & 0.78 & 1.00 & 0.81 & 0.01 & 0.24 & 9.56 & 3.14 \\ 
 & ALasso & 0.34 & 0.72 & 1.00 & 0.74 & 0.62 & 0.41 & 5.55 & 2.30 \\ 
 & OLSO & 0.23 & 0.41 & 1.00 & 1.00 & 1.00 & 1.00 & 5.00 & 2.00 \\ 
 & OLSA & 1.14 & 1.14 & 1.00 & 1.00 & 0.00 & 0.00 & 25.00 & 10.00 \\ 
\midrule
\multirow{4}{*}{\begin{sideways}Exp E\end{sideways}} 
& Lasso & 0.19 & 1.55 & 1.00 & 0.26 & 0.05 & 0.06 & 8.17 & 3.63 \\ 
 & ALasso & 0.08 & 1.40 & 1.00 & 0.23 & 0.93 & 0.09 & 5.08 & 3.17 \\ 
 & OLSO & 0.07 & 0.59 & 1.00 & 1.00 & 1.00 & 1.00 & 5.00 & 4.00 \\ 
 & OLSA & 0.31 & 3.20 & 1.00 & 1.00 & 0.00 & 0.00 & 25.00 & 100.00 \\ 
\midrule
\multirow{4}{*}{\begin{sideways}Exp F\end{sideways}} 
& Lasso & 0.17 & 0.25 & 1.00 & 1.00 & 0.02 & 0.56 & 9.05 & 2.65 \\ 
 & ALasso & 0.07 & 0.14 & 1.00 & 1.00 & 0.96 & 0.92 & 5.04 & 2.08 \\ 
 & OLSO & 0.07 & 0.12 & 1.00 & 1.00 & 1.00 & 1.00 & 5.00 & 2.00 \\ 
 & OLSA & 0.29 & 0.31 & 1.00 & 1.00 & 0.00 & 0.00 & 25.00 & 10.00 \\
\bottomrule
\end{tabular}
\caption{\small MSE($\beta$) and MSE($c$) are the average root mean square errors of the parameter estimates. Sub($\beta$) and Sub($c$) indicate the fraction of times the estimated model contains all the relevant variables (in $X$ and $D$) while Spar($\beta$) and Spar($c$) show how often exactly the correct subset of variables is chosen. Finally, $\#\beta$ and $\# c$ give the average number of non-zero $\beta$s and $c$s, respectively.}
\end{table}

Experiments G-H are the truly high-dimensional ones where the number of variables is (much) larger than the sample size. Hence, we do not implement least squares using all variables. Experiment G illustrates a rather difficult setting with many heavy-tailed covariates. The Lasso does a decent job in reducing dimensionality without being overwhelming either. The average number of non-zero $\hat{\beta}$s is 36.97 which is still larger than the five true non-zero coefficients. The adaptive Lasso removes ten more variables without discarding (many) more relevant ones so the second step seems worth implementing. 

In Experiment H the covariates are gaussian and the Lasso and the adaptive Lasso perform much better than in the heavy-tailed Experiment G. The estimation error of $\hat{\beta}$ is more than halved compared to Experiment G and all relevant variables are retained in the model. This does not come at the price of bigger models since the average number of non-zero coefficients is now smaller than before. The adaptive Lasso only classifies 17.64 $\beta$s as zero (of which five are truly non-zero) resulting in a significant dimension reduction. 

Experiment I doubles the number of variables in $X$ compared to Experiment H. In this light, it is reasonable that the estimation error of $\hat{\beta}$ roughly doubles. Almost all non-zero $\beta^*$ are also classified as such but unfortunately, though not unexpectedly, the total number of $\beta$s classified as non-zero also roughly doubles. However, the adaptive Lasso still manages to reduce the number of variables to less than one tenth of the original number of variables. 

\begin{table}[ht]
\centering
\begin{tabular}{cccccccccc}
\toprule
& & MSE($\beta$) & MSE($c$) & Sub($\beta$) & Sub($c$) & Spar($\beta$) & Spar($c$) & $\#\beta$ & $\# c$ \\ 
\midrule
\multirow{4}{*}{\begin{sideways}Exp G\end{sideways}} 
 & Lasso & 1.73 & 1.42 & 0.67 & 0.30 & 0.00 & 0.05 & 36.97 & 2.77 \\ 
 & ALasso & 1.66 & 1.51 & 0.62 & 0.26 & 0.05 & 0.05 & 26.93 & 2.45 \\ 
 & OLSO & 0.37 & 0.67 & 1.00 & 1.00 & 1.00 & 1.00 & 5.00 & 2.00 \\ 
 & OLSA &  &  &  &  &  &  &  &  \\\midrule
\multirow{4}{*}{\begin{sideways}Exp H\end{sideways}} 
& Lasso & 0.87 & 1.05 & 1.00 & 0.49 & 0.01 & 0.24 & 24.46 & 2.28 \\ 
 & ALasso & 0.66 & 0.94 & 1.00 & 0.48 & 0.20 & 0.25 & 17.54 & 2.13 \\ 
 & OLSO & 0.22 & 0.40 & 1.00 & 1.00 & 1.00 & 1.00 & 5.00 & 2.00 \\ 
 & OLSA &  &  &  &  &  &  &  &  \\
\midrule
\multirow{4}{*}{\begin{sideways}Exp I\end{sideways}} 
& Lasso & 1.43 & 1.03 & 0.97 & 0.55 & 0.00 & 0.14 & 63.95 & 2.90 \\ 
 & ALasso & 1.20 & 0.99 & 0.93 & 0.49 & 0.04 & 0.17 & 38.27 & 2.43 \\ 
 & OLSO & 0.33 & 0.43 & 1.00 & 1.00 & 1.00 & 1.00 & 10.00 & 2.00 \\ 
 & OLSA &  &  &  &  &  &  &  &  \\ 
\bottomrule
\end{tabular}
\caption{\small MSE($\beta$) and MSE($c$) are the average root mean square errors of the parameter estimates. Sub($\beta$) and Sub($c$) indicate the fraction of times the estimated model contains all the relevant variables (in $X$ and $D$) while Spar($\beta$) and Spar($c$) show how often exactly the correct subset of variables is chosen. Finally, $\#\beta$ and $\# c$ give the average number of non-zero $\beta$s and $c$s, respectively.}
\end{table}

\section{Empirical illustration}\label{Emp}
In this section we illustrate the use of the panel (adaptive) Lasso on a large data set for the G8 countries. In particular, we try to determine which variables are relevant for explaining economic growth in these countries. The neoclassical growth model predicts that higher initial wealth should lead to lower growth rates. The primary mechanism behind this prediction is that countries with low capital to labor ratios tend to have a higher marginal return to capital, \cite{barro91}. In this section we shall investigate whether this prediction is true for some of the biggest economies in the world.

The data set has been obtained from the data bank of world development indicators. The panel that we analyse consists of 8 countries with 20 annual observations for each country for the period 1992-2011. The number of explanatory variables (excluding the eight individual effects dummies) is 161. Hence, the number of variables is large compared to the number of observations and the Lasso-type estimators come to use since they offer a non ad hoc way of choosing the variables. Put differently, one can handle a much larger conditioning set of variables than previous methods.

The variables cover broad categories such as economical, health, demographical and technological ones. The GDP level is treated specially in the sense that it enters the right hand side of the model with a lag of one year to enable us to test whether initial GDP is related (negatively) to GDP growth. All right hand side variables are standardised to have an $\ell_2$-norm equal to the sample size. The Lasso as well as the adaptive Lasso are implemented by the \texttt{glmnet} as in the Monte Carlo section.

Table \ref{tab4} contains the results of the estimation. In the first round $\lambda_{N,T}$ is chosen by BIC for the Lasso as well as the adaptive Lasso. Then it is gradually reduced by choosing decreasing fractions of this initial choice. This is done as a kind of sensitivity check to verify the robustness of the sparsity. As can be seen from Table \ref{tab4} the Lasso and the adaptive Lasso indeed choose very sparse models when $\lambda_{N,T}$ is chosen by BIC. In particular, they include three and two variables, respectively. Note that all variables chosen are annual growth rates. This is sensible since we are trying to explain the \textit{annual growth rate} of GDP. Furthermore, it is seen that initial GDP does not enter as an explanatory variable. Hence, we find no support for the neoclassical growth hypothesis. However, it should be said that this hypothesis might be more relevant at explaining differences in growth between developed and less developed countries while all countries in our sample are rather developed. Furthermore, we use the GDP of the previous year as initial GDP which is a choice that might not leave enough time for the transmission mechanisms to function properly.

As can be expected, lowering $\lambda_{N,T}$ results in more variables being included in the model. This is manifested in Table \ref{tab4} by the models becoming gradually larger as $\lambda_{N,T}$ is decreased. But only for $\lambda_{N,T}=0.1\cdot \lambda_{BIC}$ a dummy is included in the model by the Lasso (for the United Kingdom).

\begin{landscape}
\begin{table}
\scalebox{.75}{
\begin{tabular}{ccc}
\toprule
$\lambda_{N,T}$ & Lasso & Adaptive Lasso\\
\midrule
$\lambda_{BIC}$ &Exports of goods and services (annual \% growth)& \\                   
&Gross fixed capital formation (annual \% growth)&Gross fixed capital formation (annual \% growth)\\                  
&Household final consumption expenditure (annual \% growth)&Household final consumption expenditure (annual \% growth) \\
\midrule
$0.75\cdot\lambda_{BIC}$&Exports of goods and services (annual \% growth)&Exports of goods and services (annual \% growth)\\                   
&General government final consumption expenditure (annual \% growth)& \\
&Gross fixed capital formation (annual \% growth)&Gross fixed capital formation (annual \% growth)\\                  
&Household final consumption expenditure (annual \% growth)&Household final consumption expenditure (annual \% growth) \\
\midrule
$0.5\cdot\lambda_{BIC}$&Exports of goods and services (annual \% growth)&Exports of goods and services (annual \% growth)\\                   
&General government final consumption expenditure (annual \% growth)& \\
&Gross fixed capital formation (annual \% growth)&Gross fixed capital formation (annual \% growth)\\                  
&Household final consumption expenditure (annual \% growth)&Household final consumption expenditure (annual \% growth) \\
&Market capitalization of listed companies (\% of GDP)& \\
\midrule
$0.25\cdot\lambda_{BIC}$&Exports of goods and services (annual \% growth)&Exports of goods and services (annual \% growth)\\                   
&General government final consumption expenditure (annual \% growth)&General government final consumption expenditure (annual \% growth)\\
&Gross fixed capital formation (annual \% growth)&Gross fixed capital formation (annual \% growth)\\                  
&Household final consumption expenditure (annual \% growth)&Household final consumption expenditure (annual \% growth) \\
&Market capitalization of listed companies (\% of GDP)& \\
\midrule
$0.1\cdot\lambda_{BIC}$ &Exports of goods and services (annual \% growth)& Exports of goods and services (annual \% growth)                \\            
&Final consumption expenditure, etc. (annual \% growth)&   Final consumption expenditure, etc. (annual \% growth)       \\            
&Forest rents (\% of GDP)&                                                 \\
&General government final consumption expenditure (annual \% growth)& General government final consumption expenditure (annual \% growth)         \\
&Gross fixed capital formation (annual \% growth)&    Gross fixed capital formation (annual \% growth)                        \\
&Household final consumption expenditure (annual \% growth)&         \\      
&Household final consumption expenditure, PPP (constant 2005 international \$)& \\
&Inflation, GDP deflator (annual \%)&  Inflation, GDP deflator (annual \%)                       \\ 
&Market capitalization of listed companies (\% of GDP)&            \\           
& UK dummy & \\
\bottomrule
\end{tabular}
}
\caption{\small The table shows which variables were chosen by the Lasso and the adaptive Lasso for various choices of $\lambda_{N,T}$. $\lambda_{BIC}$ indicates that $\lambda_{N,T}$ was chosen by BIC while the other sections of the table contain the results for $\lambda_{N,T}$ being a certain fraction of $\lambda_{BIC}$.} 
\label{tab4}
\end{table}
\end{landscape}

\section{Conclusion}\label{concl}
High-dimensional data is becoming increasingly available and one of the first choices one has to make when building a model is which variables to include. Furthermore, panel data models are a work horse tool for microeconometric analysis. For these reasons we have studied the performance of the panel Lasso and adaptive Lasso in high-dimensional panel data models.
In particular, this paper has established finite sample upper bounds on the estimation error of the panel Lasso estimator that hold with high probability. We have also shown that the upper bounds are optimal in a sense made clear in Theorem \ref{lowerbound}. Conditions for consistency in even very high-dimensional models were also provided. 

Next, the panel adaptive Lasso was analyzed and we gave lower bounds on the probability with which it selects the correct sign pattern in finite samples. These results were then used to deduce asymptotic results. 

The results were proven under various assumptions on the moment/tail behavior of the covariates and the error terms. In particular, we allowed for non-subgaussian behavior in some of our theorems.

The methods were then applied to finding the variables that explain growth in the G8 countries over the last 20 years. A rather sparse model was found to explain the growth.

In this paper we have used BIC to select the tuning parameters but ideally one would like a data driven way with theoretical guarantees. We leave this as an interesting avenue for future research. Furthermore, we have considered static panel date models in this paper. We conjecture that the results will carry over to dynamic panel data models by combining the present techniques with the ones we have previously used in a purely dynamic setting in \cite{kock2013oracle}.

\section{Appendix}
We start with the following Lemma which is similar in spirit to Lemma B.1 in \cite{bickelrt09}.

\begin{lemma}\label{masterlemma}
On $\mathcal{A}_{N,T}$ the following inequalities are valid.
\begin{align}
\enVert[1]{Z(\hat{\gamma}-\gamma^*)}^2+\lambda_{N,T}\enVert[1]{\hat{\beta}-\beta^*}_{\ell_1}+\mu_{N,T}\enVert[1]{\hat{c}-c^*}_{\ell_1}
\leq
4\lambda_{N,T}\enVert[1]{\hat{\beta}_{J_1}-\beta^*_{J_1}}_{\ell_1}+4\mu_{N,T}\enVert[1]{\hat{c}_{J_2}-c^*_{J_2}}_{\ell_1}\label{IQ1}
\end{align}
and
\begin{align}
\lambda_{N,T}\enVert[1]{\hat{\beta}_{J_1^c}-\beta^*_{J_1^c}}_{\ell_1}+\mu_{N,T}\enVert[1]{\hat{c}_{J_2^c}-c^*_{J_2^c}}_{\ell_1}
\leq
3\lambda_{N,T}\enVert[1]{\hat{\beta}_{J_1}-\beta^*_{J_1}}_{\ell_1}+3\mu_{N,T}\enVert[1]{\hat{c}_{J_2}-c^*_{J_2}}_{\ell_1}\label{IQ2}
\end{align}
\end{lemma}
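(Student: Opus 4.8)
The plan is to run the standard \emph{basic inequality} argument for the Lasso, adapted to the two-block penalty structure. Writing $\delta^1=\hat\beta-\beta^*$ and $\delta^2=\hat c-c^*$, the starting point is that $\hat\gamma$ minimizes the objective in (\ref{Lassoobj}), so $L(\hat\beta,\hat c)\leq L(\beta^*,c^*)$. Substituting $y=Z\gamma^*+\epsilon$ and expanding the squared residual norm, the $\enVert[0]{\epsilon}^2$ terms cancel and one is left with
\begin{align*}
\enVert[1]{Z(\hat\gamma-\gamma^*)}^2+2\lambda_{N,T}\enVert[1]{\hat\beta}_{\ell_1}+2\mu_{N,T}\enVert[1]{\hat c}_{\ell_1}
\leq
2\epsilon'Z(\hat\gamma-\gamma^*)+2\lambda_{N,T}\enVert[1]{\beta^*}_{\ell_1}+2\mu_{N,T}\enVert[1]{c^*}_{\ell_1}.
\end{align*}

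First I would control the cross term. Since $Z(\hat\gamma-\gamma^*)=X\delta^1+D\delta^2$, it splits as $(X'\epsilon)'\delta^1+(D'\epsilon)'\delta^2$. Applying H\"older's inequality to each piece and using the defining bounds $\enVert[0]{X'\epsilon}_{\ell_\infty}\leq\lambda_{N,T}/2$ and $\enVert[0]{D'\epsilon}_{\ell_\infty}\leq\mu_{N,T}/2$ that hold on $\mathcal A_{N,T}$ yields
\begin{align*}
2\epsilon'Z(\hat\gamma-\gamma^*)\leq\lambda_{N,T}\enVert[1]{\delta^1}_{\ell_1}+\mu_{N,T}\enVert[1]{\delta^2}_{\ell_1}.
\end{align*}
This is precisely where membership in $\mathcal A_{N,T}$ enters, and where the two separate regularization sequences appear naturally.

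Next I would exploit the sparsity of $\gamma^*$ through the block decomposition over $J_1,J_1^c$ and $J_2,J_2^c$. Because $\beta^*_{J_1^c}=0$ and $c^*_{J_2^c}=0$, one has $\enVert[0]{\hat\beta}_{\ell_1}=\enVert[0]{\hat\beta_{J_1}}_{\ell_1}+\enVert[0]{\delta^1_{J_1^c}}_{\ell_1}$ and $\enVert[0]{\beta^*}_{\ell_1}=\enVert[0]{\beta^*_{J_1}}_{\ell_1}$, with the analogous identities for $c$. Combining these with the reverse triangle inequality $\enVert[0]{\beta^*_{J_1}}_{\ell_1}-\enVert[0]{\hat\beta_{J_1}}_{\ell_1}\leq\enVert[0]{\delta^1_{J_1}}_{\ell_1}$ converts the penalty difference $2\lambda_{N,T}(\enVert[0]{\beta^*}_{\ell_1}-\enVert[0]{\hat\beta}_{\ell_1})$ into $2\lambda_{N,T}(\enVert[0]{\delta^1_{J_1}}_{\ell_1}-\enVert[0]{\delta^1_{J_1^c}}_{\ell_1})$, and similarly for $c$. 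Plugging these in and collecting the $J_1/J_1^c$ and $J_2/J_2^c$ pieces of $\enVert[0]{\delta^1}_{\ell_1}$ and $\enVert[0]{\delta^2}_{\ell_1}$ gives
\begin{align*}
\enVert[1]{Z(\hat\gamma-\gamma^*)}^2+\lambda_{N,T}\enVert[1]{\delta^1_{J_1^c}}_{\ell_1}+\mu_{N,T}\enVert[1]{\delta^2_{J_2^c}}_{\ell_1}
\leq
3\lambda_{N,T}\enVert[1]{\delta^1_{J_1}}_{\ell_1}+3\mu_{N,T}\enVert[1]{\delta^2_{J_2}}_{\ell_1}.
\end{align*}
Inequality (\ref{IQ2}) is then immediate by discarding the nonnegative prediction-error term, while (\ref{IQ1}) follows by adding $\lambda_{N,T}\enVert[0]{\delta^1_{J_1}}_{\ell_1}+\mu_{N,T}\enVert[0]{\delta^2_{J_2}}_{\ell_1}$ to both sides and recombining $\enVert[0]{\delta^1_{J_1}}_{\ell_1}+\enVert[0]{\delta^1_{J_1^c}}_{\ell_1}=\enVert[0]{\delta^1}_{\ell_1}$.

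I do not anticipate a genuine obstacle: the argument is algebraic and entirely deterministic once the event $\mathcal A_{N,T}$ is imposed. The only point requiring care is the bookkeeping across the two blocks---keeping the $\lambda_{N,T}$-weighted $\beta$ terms and the $\mu_{N,T}$-weighted $c$ terms separate throughout---so that the cross term and the penalty differences split cleanly and the two regularization sequences never get mixed. This separation is exactly what later permits the distinct scalings of $\beta^*$ and $c^*$ to be handled independently.
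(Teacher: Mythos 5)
Your proposal is correct and follows essentially the same route as the paper's proof: the basic inequality from the minimizing property of $\hat{\gamma}$, H\"older's inequality on the cross term $2\epsilon'Z(\hat{\gamma}-\gamma^*)$ using the defining bounds of $\mathcal{A}_{N,T}$, and the sparsity-based triangle-inequality step on the penalty differences. The only difference is trivial bookkeeping --- you derive an intermediate inequality and read off (\ref{IQ2}) first, then (\ref{IQ1}), whereas the paper establishes (\ref{IQ1}) first and obtains (\ref{IQ2}) from it; the two orderings are algebraically equivalent.
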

\begin{proof}
By the minimizing property of $\hat{\gamma}$ it follows that
\begin{align*}
\enVert[1]{y-Z\hat{\gamma}}^2+2\lambda_{N,T}\enVert[1]{\hat{\beta}}_{\ell_1}+2\mu_{N,T}\enVert[0]{\hat{c}}_{\ell_1}
\leq 
\enVert[1]{y-Z\gamma^*}^2+2\mu_{N,T}\enVert[1]{\beta^*}_{\ell_1}+2\mu_{N,T}\enVert[0]{c^*}_{\ell_1}
\end{align*}
which, using that $y=Z\gamma^*+\epsilon$, yields
\begin{align*}
\enVert[1]{Z(\hat{\gamma}-\gamma^*)}^2-2\epsilon'Z(\hat{\gamma}-\gamma^*)+2\lambda_{N,T}\enVert[1]{\hat{\beta}}_{\ell_1}+2\mu_{N,T}\enVert[0]{\hat{c}}_{\ell_1}
\leq 
2\lambda_T\enVert[1]{\beta^*}_{\ell_1}+2\mu_{N,T}\enVert[0]{c^*}_{\ell_1}
\end{align*}
Or, equivalently
\begin{align}
\enVert[1]{Z(\hat{\gamma}-\gamma^*)}^2
\leq 
2\epsilon'Z(\hat{\gamma}-\gamma^*)+2\lambda_{N,T}\left(\enVert[1]{\beta^*}_{\ell_1}-\enVert[1]{\hat{\beta}}_{\ell_1}\right)+2\mu_{N,T}\left(\enVert[0]{c^*}_{\ell_1}-\enVert[0]{\hat{c}}_{\ell_1}\right)\label{start}
\end{align}
So to bound $\enVert[1]{Z(\hat{\gamma}-\gamma^*)}^2$ it is sufficient to bound $2\epsilon'Z(\hat{\gamma}-\gamma^*)$. Note that on $\mathcal{A}_{N,T}$ one has
\begin{align*}
2\epsilon'Z(\hat{\gamma}-\gamma^*)
&=
2\epsilon'X(\hat{\beta}-\beta^*)+2\epsilon'D(\hat{c}-c^*)\\
&\leq
2\enVert[1]{\epsilon'X}_{\ell_\infty}\enVert[0]{\hat{\beta}-\beta^*}_{\ell_1}+2\enVert[1]{\epsilon'D}_{\ell_\infty}\enVert[0]{\hat{c}-c^*}_{\ell_1}\\
&\leq
\lambda_{N,T}\enVert[1]{\hat{\beta}-\beta^*}_{\ell_1}+\mu_{N,T}\enVert[1]{\hat{c}-c^*}_{\ell_1}
\end{align*}
Putting things together, on $\mathcal{A}_{N,T}$,
\begin{align*}
&\enVert[1]{Z(\hat{\gamma}-\gamma^*)}^2\\
&\leq
\lambda_{N,T}\enVert[1]{\hat{\beta}-\beta^*}_{\ell_1}+2\lambda_{N,T}\left(\enVert[1]{\beta^*}_{\ell_1}-\enVert[1]{\hat{\beta}}_{\ell_1}\right)+\mu_{N,T}\enVert[1]{\hat{c}-c^*}_{\ell_1}+2\mu_{N,T}\left(\enVert[0]{c^*}_{\ell_1}-\enVert[0]{\hat{c}}_{\ell_1}\right)
\end{align*}
Adding $\lambda_{N,T}\enVert[1]{\hat{\beta}-\beta^*}_{\ell_1}$ and $\mu_{N,T}\enVert[1]{\hat{c}-c^*}_{\ell_1}$ yields
\begin{align}
&\enVert[1]{Z(\hat{\gamma}-\gamma^*)}^2+\lambda_{N,T}\enVert[1]{\hat{\gamma}-\gamma^*}_{\ell_1}+\mu_{N,T}\enVert[1]{\hat{c}-c^*}_{\ell_1}\notag\\
&\leq
2\lambda_{N,T}\left(\enVert[1]{\hat{\beta}-\beta^*}_{\ell_1}+\enVert[1]{\beta^*}_{\ell_1}-\enVert[1]{\hat{\beta}}_{\ell_1}\right)+2\mu_{N,T}\left(\enVert[0]{\hat{c}-c^*}_{\ell_1}+\enVert[0]{c^*}_{\ell_1}-\enVert[0]{\hat{c}}_{\ell_1}\right)\label{A1}
\end{align}
Notice that 
\begin{align*}
\enVert[1]{\hat{\beta}-\beta^*}_{\ell_1}+\enVert[1]{\beta^*}_{\ell_1}-\enVert[1]{\hat{\beta}}_{\ell_1}=\enVert[1]{\hat{\beta}_{J_1}-\beta^*_{J_1}}_{\ell_1}+\enVert[1]{\beta^*_{J_1}}_{\ell_1}-\enVert[1]{\hat{\beta}_{J_1}}_{\ell_1}
\end{align*}
In addition, $\enVert[1]{\hat{\beta}_{J_1}-\beta^*_{J_1}}_{\ell_1}+\enVert[1]{\beta^*_{J_1}}_{\ell_1}-\enVert[1]{\hat{\beta}_{J_1}}_{\ell_1}\leq 2\enVert[1]{\hat{\beta}_{J_1}-\beta^*_{J_1}}_{\ell_1}$
by continuity of the norm. By exactly the same arguments $\enVert[0]{\hat{c}-c^*}_{\ell_1}+\enVert[0]{c^*}_{\ell_1}-\enVert[0]{\hat{c}}_{\ell_1} \leq 2\enVert[1]{\hat{c}_{J_2}-c^*_{J_2}}_{\ell_1}$. Using these estimates in (\ref{A1}) yields inequality (\ref{IQ1}).
Next notice that (\ref{IQ1}) gives
\begin{align*}
\lambda_{N,T}\enVert[1]{\hat{\beta}-\beta^*}_{\ell_1}+\mu_{N,T}\enVert[1]{\hat{c}-c^*}_{\ell_1}
\leq
4\lambda_{N,T}\enVert[1]{\hat{\beta}_{J_1}-\beta^*_{J_1}}_{\ell_1}+4\mu_{N,T}\enVert[1]{\hat{c}_{J_2}-c^*_{J_2}}_{\ell_1}
\end{align*}
which is equivalent to
\begin{align*}
\lambda_{N,T}\enVert[1]{\hat{\beta}_{J_1^c}-\beta^*_{J_1^c}}_{\ell_1}+\mu_{N,T}\enVert[1]{\hat{c}_{J_2^c}-c^*_{J_2^c}}_{\ell_1}
\leq
3\lambda_{N,T}\enVert[1]{\hat{\beta}_{J_1}-\beta^*_{J_1}}_{\ell_1}+3\mu_{N,T}\enVert[1]{\hat{c}_{J_2}-c^*_{J_2}}_{\ell_1}
\end{align*}
and establishes inequality (\ref{IQ2}). 
\end{proof}

\begin{proof}[Proof of Theorem \ref{thm1}]
By (\ref{IQ1}) of Lemma \ref{masterlemma} (which is valid on $\mathcal{A}_{N,T}$)
\begin{align}
\enVert[1]{Z(\hat{\gamma}-\gamma^*)}^2
\leq
4\lambda_{N,T}\enVert[1]{\hat{\beta}_{J_1}-\beta^*_{J_1}}_{\ell_1}+4\mu_{N,T}\enVert[1]{\hat{c}_{J_2}-c^*_{J_2}}_{\ell_1}\label{IQcorr}
\end{align}
Next, note that for $b=S^{-1}\delta$ where $b$ is partitioned as $b=({b^1}',{b^2}')'$ with $b^1$ being a $p\times 1$ vector and $b^2$ an $N\times 1$ vector, the restricted eigenvalue condition (\ref{RE}) may be formulated equivalently as 
\begin{align*}
\kappa_{\psi_{N,T}}^2(r_1,r_2)
=
\min\Bigg\{\frac{\enVert[0]{Zb}^2}{\enVert{Sb}^2}: b \in \mathbb{R}^{p+N}\setminus\cbr[0]{0},\ |R_1|\leq r_1,\ |R_2|\leq r_2,\\ \lambda_{N,T}\enVert[1]{b_{R_1^c}^1}_{\ell_1}+\mu_{N,T}\enVert[1]{b_{R_2^c}^2}_{\ell_1}\leq 3\lambda_{N,T}\enVert[1]{b_{R_1}^1}_{\ell_1}+3\mu_{N,T}\enVert[1]{b_{R_2}^2}_{\ell_1}\Bigg\}>0
\end{align*}
Hence, the restricted eigenvalue condition (which is applicable due to (\ref{IQ2}) yields
\begin{align}
\enVert[1]{Z(\hat{\gamma}-\gamma^*)}^2
\geq
\kappa_{\Psi_{N,T}}^2\enVert{S(\hat{\gamma}-\gamma^*)}^2
\geq
\kappa^2/2\left[NT\enVert[1]{\hat{\beta}-\beta^*}^2+T\enVert[1]{\hat{c}-c^*}^2\right]\label{LHS1}
\end{align}
where the last estimate holds on $\mathcal{B}_{N,T}$. By Jensen's inequality
\begin{align}
4\lambda_{N,T}\enVert[1]{\hat{\beta}_{J_1}-\beta^*_{J_1}}_{\ell_1}+4\mu_{N,T}\enVert[1]{\hat{c}_{J_2}-c^*_{J_2}}_{\ell_1}
&\leq
4\lambda_{N,T}\sqrt{s_1}\enVert[1]{\hat{\beta}_{J_1}-\beta^*_{J_1}}+4\mu_{N,T}\sqrt{s_2}\enVert[1]{\hat{c}_{J_2}-c^*_{J_2}}\notag\\
&\leq
4\lambda_{N,T}\sqrt{s_1}\enVert[1]{\hat{\beta}-\beta^*}+4\mu_{N,T}\sqrt{s_2}\enVert[1]{\hat{c}-c^*}\label{RHS1}
\end{align}
Inserting (\ref{LHS1}) and (\ref{RHS1}) into (\ref{IQcorr}) yields
\begin{align*}
\frac{\kappa^2}{2}\left[NT\enVert[1]{\hat{\beta}-\beta^*}^2+T\enVert[1]{\hat{c}-c^*}^2\right]
\leq
4\lambda_{N,T}\sqrt{s_1}\enVert[1]{\hat{\beta}-\beta^*}+4\mu_{N,T}\sqrt{s_2}\enVert[1]{\hat{c}-c^*}
\end{align*}
or equivalently,
\begin{align}
\enVert[1]{\hat{\beta}-\beta^*}^2+\frac{1}{N}\enVert[1]{\hat{c}-c^*}^2-\frac{8\lambda_{N,T}\sqrt{s_1}}{\kappa^2NT}\enVert[1]{\hat{\beta}-\beta^*}-\frac{8\mu_{N,T}\sqrt{s_2}}{\kappa^2NT}\enVert[1]{\hat{c}-c^*}\leq 0 \label{quadineqq}
\end{align}
For $x=\enVert[0]{\hat{\beta}-\beta^*}$ and $y=\enVert[0]{\hat{c}-c^*}$ this can be written as a quadratic inequality in two variables:
\begin{align}
x^2-ax+by^2-cy\leq 0,\ x,y\geq 0\label{quadineq}
\end{align}
\footnote{Note that this inequality is trivially satisfied by $x=y=0$, corresponding to no estimation error. However, we are looking for an upper bound on $x$ and $y$, i.e. the largest possibe values that satisfy (\ref{quadineq}) and hence (\ref{quadineqq}).} with $a=\frac{8\lambda_{N,T}\sqrt{s_1}}{\kappa^2NT},\ b=\frac{1}{N}$ and $c=\frac{8\mu_{N,T}\sqrt{s_2}}{\kappa^2NT}$. First bound $x=\enVert[0]{\hat{\beta}-\beta^*}$. For every $y$ the values of $x$ that satisfy  (\ref{quadineq}) form an interval in $\mathbb{R}_+$. The right end point of this interval is the desired upper bound on $x$. Clearly, by the solution formula for the roots of a second degree polynomial, this right end point is a decreasing function in $by^2-cy$. Hence, we first minimize the polynomial $by^2-cy$ to find the largest possible value of $x$ which satisfies (\ref{quadineq}). This yields $y=\frac{c}{2b}$ and the corresponding value of $by^2-cy$ is $-\frac{c^2}{4b}$. Hence, our desired upper bound on $x$ is the largest solution of $x^2-ax-\frac{c^2}{4b}\leq 0$. By the standard solution formula for the roots of a quadratic polynomial this yields 
\begin{align}
\enVert[1]{\hat{\beta}-\beta^*}=x\leq \frac{a+\sqrt{a^2+c^2/b}}{2}\label{boundx}
\end{align}
Switching the roles of $x$ and $y$, one gets a similar bound on $y=\enVert[0]{\hat{c}-c^*}$, namely
\begin{align}
\enVert[1]{\hat{c}-c^*}=y\leq \frac{c+\sqrt{c^2+ba^2}}{2b}\label{boundy}
\end{align}
Inserting the definitions of $a,b$ and $c$ into (\ref{boundx}) yields
\begin{align*}
\enVert[1]{\hat{\beta}-\beta^*}
\leq
\frac{\frac{8\lambda_{N,T}\sqrt{s_1}}{\kappa^2NT}+\sqrt{\del[2]{\frac{8\lambda_{N,T}\sqrt{s_1}}{\kappa^2NT}}^2+\del[2]{\frac{8\mu_{N,T}\sqrt{s_2}}{\kappa^2NT}}^2N}}{2}
\leq
\frac{8\lambda_{N,T}\sqrt{s_1}}{\kappa^2NT}+\frac{4\mu_{N,T}\sqrt{s_2}}{\kappa^2\sqrt{N}T}
\end{align*}
by subadditivity of $x\mapsto\sqrt{x}$. Similarly,
\begin{align*}
\enVert[1]{\hat{c}-c^*}
\leq
\frac{\frac{8\mu_{N,T}\sqrt{s_2}}{\kappa^2NT}+\sqrt{\del[2]{\frac{8\mu_{N,T}\sqrt{s_2}}{\kappa^2NT}}^2+\frac{1}{N}\del[2]{\frac{8\lambda_{N,T}\sqrt{s_1}}{\kappa^2NT}}^2}}{2/N}
\leq
\frac{8\mu_{N,T}\sqrt{s_2}}{\kappa^2T}+\frac{4\lambda_{N,T}\sqrt{s_1}}{\kappa^2\sqrt{N}T}
\end{align*}
\end{proof}

Before stating the next lemma we shall remark that when no further distinction between subscripts $i$ and $t$ is needed we shall sometimes use $x_{j,k}$ to denote the $j$th entry of the $k$th variable $x_k=(x_{1,1,k},x_{1,2,k},...,x_{1,T,k},x_{2,1,k},...,x_{N,T,k})'$ with $1\leq j \leq NT$. Similarly, we will write $\epsilon_j$ for the $j$th entry of $\epsilon=(\epsilon_{1,1},\epsilon_{1,2},...,\epsilon_{1,T},\epsilon_{2,1},...,\epsilon_{N,T})'$ $1\leq j\leq NT$.

\begin{lemma}\label{AboundLp}
Let $\lambda_{N,T}=4a_{N,T}p^{1/r}(NT)^{1/2}\max_{1\leq t\leq T}\enVert[0]{\epsilon_{1,t}}_{L_r}$ and\\ $\mu_{N,T}=4a_{N,T}N^{1/r}T^{1/2}\max_{1\leq t\leq T}\enVert[0]{\epsilon_{1,t}}_{L_r}$ for some sequence $a_{N,T}$. Then, under assumption A1) and  A2a)
\begin{align}
P\del[1]{\mathcal{A}_{N,T}^c}=P\del[3]{\cbr[3]{\enVert[0]{X'\epsilon}_{\ell_\infty}>\frac{\lambda_{N,T}}{2}}\cup \cbr[3]{\enVert[0]{D'\epsilon}_{\ell_\infty}>\frac{\mu_{N,T}}{2}}}\leq 2\del{\frac{C_r}{a_{N,T}}}^r\label{bothbound}
\end{align}
\end{lemma}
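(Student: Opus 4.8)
The plan is to bound $P(\mathcal{A}_{N,T}^c)$ by first splitting it, via subadditivity of probability, into the two events governing $X'\epsilon$ and $D'\epsilon$, so that it suffices to show $P(\|X'\epsilon\|_{\ell_\infty} > \lambda_{N,T}/2) \leq (C_r/a_{N,T})^r$ and $P(\|D'\epsilon\|_{\ell_\infty} > \mu_{N,T}/2) \leq (C_r/a_{N,T})^r$ separately. A further union bound over the $p$ coordinates of $X'\epsilon$ and the $N$ coordinates of $D'\epsilon$ then reduces everything to a coordinatewise $L_r$ bound combined with Markov's inequality. The whole argument is driven by the observation that each coordinate is a martingale, to which Rosenthal's inequality applies.

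Fix a coordinate $k$, so $(X'\epsilon)_k = \sum_{i=1}^N\sum_{t=1}^T x_{i,t,k}\epsilon_{i,t}$. First I would establish the martingale structure: ordering the pairs $(i,t)$ lexicographically and letting $\mathcal{G}_{i,t}$ be generated by all covariates $X_1,\ldots,X_N$ together with the error terms up to $(i,t)$, the summands $d_{i,t} = x_{i,t,k}\epsilon_{i,t}$ form a martingale difference array. Indeed $x_{i,t,k}$ is $\mathcal{G}_{i,t-1}$-measurable since it is a function of $X$, while A1a)--A1c) (errors independent across individuals, $X_i\perp\epsilon_i$, and the within-individual martingale difference property) give $E[\epsilon_{i,t}\mid\mathcal{G}_{i,t-1}]=0$, hence $E[d_{i,t}\mid\mathcal{G}_{i,t-1}]=0$.

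With this in hand I would apply the martingale version of Rosenthal's inequality in its sharp form with constants from \cite{hitczenko90}, which bounds $\|\sum_{i,t} d_{i,t}\|_{L_r}$ by $C_r$ times the sum of $\|(\sum_{i,t}E[d_{i,t}^2\mid\mathcal{G}_{i,t-1}])^{1/2}\|_{L_r}$ and $\|\max_{i,t}|d_{i,t}|\|_{L_r}$. For the second term I would dominate the maximum by the $\ell_r$ sum and use $E|d_{i,t}|^r = E|x_{i,t,k}|^r\,E|\epsilon_{i,t}|^r \leq M^r$, where $M := \max_{1\leq t\leq T}\|\epsilon_{1,t}\|_{L_r}$, via $X_i\perp\epsilon_i$ and the normalization in A2a); this gives at most $(NT)^{1/r}M$. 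For the predictable quadratic variation term the key subtlety is heteroscedasticity: I would pass to $L_{r/2}$, apply the triangle inequality, and exploit that $x_{i,t,k}$ is independent of the $\mathcal{F}_{i,t-1}$-measurable conditional variance, together with conditional Jensen $(E[\epsilon_{i,t}^2\mid\mathcal{F}_{i,t-1}])^{r/2}\leq E[|\epsilon_{i,t}|^r\mid\mathcal{F}_{i,t-1}]$, to obtain the bound $(NT)^{1/2}M$. Since $r\geq 2$ gives $(NT)^{1/r}\leq (NT)^{1/2}$, this yields $\|(X'\epsilon)_k\|_{L_r}\leq 2C_r(NT)^{1/2}M$, hence $E|(X'\epsilon)_k|^r\leq(2C_r)^r(NT)^{r/2}M^r$ uniformly in $k$.

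Finally, Markov's inequality and the union bound give $P(\|X'\epsilon\|_{\ell_\infty}>\lambda_{N,T}/2)\leq p\,(2C_r)^r(NT)^{r/2}M^r/(\lambda_{N,T}/2)^r$, and the calibration $\lambda_{N,T}/2 = 2a_{N,T}p^{1/r}(NT)^{1/2}M$ cancels the $p$ and the $(NT)^{r/2}M^r$ to leave exactly $(C_r/a_{N,T})^r$. The $D'\epsilon$ term is identical but simpler, since $(D'\epsilon)_i=\sum_{t=1}^T\epsilon_{i,t}$ is already a martingale in $t$ carrying no covariate factor, so the same steps give $\|(D'\epsilon)_i\|_{L_r}\leq 2C_r T^{1/2}M$, and $\mu_{N,T}/2=2a_{N,T}N^{1/r}T^{1/2}M$ again yields $(C_r/a_{N,T})^r$; adding the two bounds delivers the claim. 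The main obstacle is the predictable quadratic variation term under heteroscedastic, serially dependent errors: without independence across $t$ one cannot factor the conditional variance, so the proof must route through the conditional-variance form of Rosenthal and rely on conditional Jensen plus $X\perp\epsilon$ to keep the constant dimension-free.
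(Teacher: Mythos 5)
Your proposal is correct and follows essentially the same route as the paper's proof: the same martingale structure with a filtration generated by all of $X$ plus the past errors, Rosenthal's inequality with the constants of \cite{hitczenko90} applied coordinatewise, then Markov's inequality with the stated calibration of $\lambda_{N,T}$ and $\mu_{N,T}$ (your per-coordinate union bound and the paper's $p^{1/r}$-maximal-$L_r$ bound yield identical estimates). If anything, your treatment of the predictable quadratic variation term -- using independence of $x_{i,t,k}$ from the $\mathcal{F}_{i,t-1}$-measurable conditional variance together with conditional Jensen -- is a more carefully justified version of the paper's step, which replaces $E(\epsilon_j^2\mid\mathcal{F}_{j-1})$ by $\enVert[0]{\epsilon_j}_{L_2}^2$ inside the expectation without comment.
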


\begin{proof}[Proof of Lemma \ref{AboundLp}]
First bound $\enVert[1]{\max_{1\leq k \leq p} \envert[1]{\sum_{j=1}^{NT}x_{j,k}\epsilon_{j}}}_{L_r}$. To this end, note that for any collection of random variables $\cbr[0]{U_{k}}_{k=1}^p\subseteq L_r$,
\begin{align*}
\enVert{\max_{1\leq k \leq p}U_k}_{L_r}
=
[E(|\max_{1\leq k \leq p}U_k|^r)]^{1/r}
\leq
\left[E\del[4]{\sum_{k=1}^p|U_k|^r}\right]^{1/r} 
\leq
p^{1/r}\max_{1\leq k \leq p}\enVert{U_k}_{L_r}
\end{align*}
Next, bound $\enVert[1]{\sum_{j=1}^{NT}x_{j,k}\epsilon_{j}}_{L_r}$ uniformly in $1\leq k \leq p$. Denote by $\mathcal{F}_n=\sigma\del[1]{\cbr{X, \epsilon_j,\ 1\leq j \leq n}}$ the $\sigma$-field generated by $X$ and $\epsilon_{j},\ 1\leq j \leq n$ and set $S_{n,k}=\sum_{j=1}^nx_{j,k}\epsilon_{j}$. Then $\cbr[0]{(S_{n,k}, \mathcal{F}_n),\ 1\leq n\leq NT}$ is a martingale for all $1\leq k\leq p$ under assumptions A1 and the given moment assumptions\footnote{In particular,
\begin{align*}
E\del[2]{\sum_{j=1}^{n}x_{j,k}\epsilon_j|\mathcal{F}_{n-1}}
&=
\sum_{j=1}^{n-1}x_{j,k}\epsilon_j+x_{n,k}E\del[1]{\epsilon_n|\mathcal{F}_{n-1}}
\stackrel{(1)}{=}
\sum_{j=1}^{n-1}x_{j,k}\epsilon_j+x_{n,k}E\del[1]{\epsilon_n|\sigma\del[1]{\cbr[0]{\epsilon_1,...,\epsilon_{n-1}}}}\\
&\stackrel{(2)}{=}\sum_{j=1}^{n-1}x_{j,k}\epsilon_j+x_{n,k}E\del[1]{\epsilon_{i,t}|\mathcal{F}_{i,t-1}}
=\sum_{j=1}^{n-1}x_{j,k}\epsilon_j.
\end{align*}
where (1) follows from the independence of $X$ and $\epsilon_1,...,\epsilon_n$. (2) follows from identifying $n$ with a pair $(i,t)$, using the independent sampling across $i=1,...,N$ and that $\cbr[1]{\epsilon_{i,t},\mathcal{F}_{i,t}}_{t=1}^T$ form a martingale difference sequence.
}
. Hence, by Rosenthal's inequality for martingales (see \cite{hitczenko90} or \cite{hallh80}) for a constant $C_r$ depending only on $r$,\footnote{By independence of $x_{j,k}$ and $\epsilon_j$ their product is in $L_r$ and Rosenthal's inequality yields a nontrivial upper bound.}
\small{
\begin{align*}
\enVert[4]{\sum_{j=1}^{NT}x_{j,k}\epsilon_{j}}_{L_r}
&\leq
C_r\left[\del[4]{E\del[4]{\sum_{j=1}^{NT}E(x_{j,k}^2\epsilon_{j}^2|\mathcal{F}_{j-1})}^{r/2}}^{1/r}+\del[4]{E\left[\max_{1\leq j \leq NT}|x_{j,k}\epsilon_{j}|^r\right]}^{1/r}\right]\\
&\leq
C_r\left[\del[4]{E\del[4]{\sum_{j=1}^{NT}x_{j,k}^2\enVert[0]{\epsilon_j}_{L_2}^2}^{r/2}}^{1/r}+\del[4]{E\left[\sum_{j=1}^{NT}|x_{j,k}\epsilon_{j}|^r\right]}^{1/r}\right]\\
&\leq
C_r\left[\del[4]{(NT)^{r/2-1}\sum_{j=1}^{NT}E|x_{j,k}|^{r}\enVert[0]{\epsilon_j}_{L_2}^{r}}^{1/r}+(NT)^{1/r}\max_{1\leq t \leq T}\enVert{x_{1,t,k}}_{L_r}\enVert{\epsilon_{1,t}}_{L_r}\right]\\
&\leq
C_r\left[\del[4]{(NT)^{r/2-1}NT\max_{1\leq t \leq T}E|x_{1,t,k}|^{r}\enVert[0]{\epsilon_{1,t}}_{L_2}^{r}}^{1/r}+(NT)^{1/r}\max_{1\leq t \leq T}\enVert{x_{1,t,k}}_{L_r}\enVert{\epsilon_{1,t}}_{L_r}\right]\\
&\leq
C_r\left[(NT)^{1/2}\max_{1\leq t \leq T}\enVert{x_{1,t,k}}_{L_r}\enVert[0]{\epsilon_{1,t}}_{L_2}+(NT)^{1/r}\max_{1\leq t \leq T}\enVert{x_{1,t,k}}_{L_r}\enVert{\epsilon_{1,t}}_{L_r}\right]\\
&\leq
2C_r(NT)^{1/2}\max_{1\leq t \leq T}\enVert{\epsilon_{1,t}}_{L_r}\\
\end{align*}
}
In the above display we have used Loeve's $c_r$-inequality and by \cite{hitczenko90} we know that $C_r\leq 10r$. \cite{hitczenko90} actually shows that the optimal constant $C_r\in O(r/\ln(r))$ as $r\rightarrow\infty$. Hence,
\begin{align*}
\enVert[4]{\max_{1\leq k \leq p} \envert[3]{\sum_{j=1}^{NT}x_{j,k}\epsilon_{j}}}_{L_r}
\leq
\max_{1\leq k \leq p}p^{1/r}\enVert[4]{\sum_{j=1}^{NT}x_{j,k}\epsilon_{j}}_{L_r}
\leq
p^{1/r}2C_r(NT)^{1/2}\max_{1\leq t \leq T}\enVert{\epsilon_{1,t}}_{L_r}
\end{align*}
By Markov's inequality,%
\begin{align*}
P\del[4]{\max_{1\leq k \leq p} \envert[3]{\sum_{j=1}^{NT}x_{j,k}\epsilon_{j}}>\frac{\lambda_{N,T}}{2}}
\leq
\frac{1}{\del[1]{\lambda_{N,T}/(4p^{1/r}C_r(NT)^{1/2}\max_{1\leq t \leq T}\enVert{\epsilon_{1,t}}_{L_r})}^r}
=
\left(\frac{C_r}{a_{N,T}}\right)^r
\end{align*}
In a similar way as above it follows by Rosenthal's inequality%
\begin{align*}
\enVert[4]{\sum_{t=1}^{T}\epsilon_{i,t}}_{L_r}
&\leq
C_r\left[\del{\sum_{t=1}^TE(\epsilon_{i,t}^2)}^{1/2}+\del[3]{E\del[2]{\max_{1\leq t \leq T} |\epsilon_{i,t}|^r}}^{1/r}\right]\\
&\leq
C_r\left[T^{1/2}\max_{1\leq t\leq T}\enVert[0]{\epsilon_{1,t}}_{L_2}+T^{1/r}\max_{1\leq t\leq T}\enVert[0]{\epsilon_{1,t}}_{L_r}\right]\\ 
&\leq
2C_rT^{1/2}\max_{1\leq t\leq T}\enVert[0]{\epsilon_{1,t}}_{L_r}
\end{align*}
This implies that
\begin{align*}
\enVert[4]{\max_{1\leq i\leq N}\sum_{t=1}^{T}\epsilon_{i,t}}_{L_r}
\leq
\max_{1\leq i\leq N}N^{1/r}\enVert[4]{\sum_{t=1}^{T}\epsilon_{i,t}}_{L_r}
\leq
N^{1/r}2C_rT^{1/2}\max_{1\leq t\leq T}\enVert[0]{\epsilon_{1,t}}_{L_r}
\end{align*}
And so, by Markov's inequality,
\begin{align*}
P\left(\max_{1\leq i\leq N}\sum_{t=1}^{T}\epsilon_{i,t}>\frac{\mu_{N,T}}{2}\right)
\leq
\frac{1}{\del[1]{\mu_{N,T}/(4N^{1/r}C_rT^{1/2}\max_{1\leq t\leq T}\enVert[0]{\epsilon_{1,t}}_{L_r})}^r}
=\del{\frac{C_r}{a_{N,T}}}^r
\end{align*}
It follows that 
\begin{align*}
P\del[3]{\cbr[3]{\enVert[0]{X'\epsilon}_{\ell_\infty}>\frac{\lambda_{N,T}}{2}}\cup \cbr[3]{\enVert[0]{D'\epsilon}_{\ell_\infty}>\frac{\mu_{N,T}}{2}}}\leq 2\del{\frac{C_r}{a_{N,T}}}^r
\end{align*}
\end{proof}

\begin{lemma}\label{LV}
Let $\cbr{U_i,\mathcal{F}_i}_{i=1}^n$ be a martingale difference sequence and assume that there exist $\delta,M>0$ such that $E\exp(\delta |U_i|)\leq M$ for all $i=1,...,n$. Then, for $a>0$, there exists positive constants $A$ and $B$ such that for all $x\geq a/\sqrt{n}$ 
\begin{align}
P\del[3]{\envert[2]{\sum_{i=1}^nU_i}>nx}
<
Ae^{-B(x^2n)^{1/3}}\label{lv}
\end{align}
\end{lemma}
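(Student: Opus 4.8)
The plan is to prove the inequality by a single truncation, splitting each increment into a bounded piece handled by a Hoeffding-type martingale bound and an unbounded ``large-jump'' piece handled by its variance, then choosing the truncation level so that the two resulting exponents coincide at the order $(x^2n)^{1/3}$. Write $S_n=\sum_{i=1}^nU_i$ and fix a level $L>0$. Since truncation destroys the martingale-difference property, I would recenter conditionally and set
\[
V_i=U_i\mathbf 1_{\{|U_i|\le L\}}-E\big(U_i\mathbf 1_{\{|U_i|\le L\}}\mid\mathcal F_{i-1}\big),\qquad W_i=U_i\mathbf 1_{\{|U_i|>L\}}-E\big(U_i\mathbf 1_{\{|U_i|>L\}}\mid\mathcal F_{i-1}\big).
\]
Because $E(U_i\mid\mathcal F_{i-1})=0$ one has $U_i=V_i+W_i$, and both $\{V_i,\mathcal F_i\}$ and $\{W_i,\mathcal F_i\}$ are martingale difference sequences with $|V_i|\le 2L$. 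A union bound reduces the task to controlling $P(|\sum_iV_i|>nx/2)$ and $P(|\sum_iW_i|>nx/2)$ separately.

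For the bounded part I would apply the Azuma--Hoeffding inequality for martingales with increments bounded by $2L$, which gives $P(|\sum_iV_i|>nx/2)\le 2\exp(-nx^2/(32L^2))$. For the large-jump part the key is to use the \emph{second} moment rather than a union bound: since the $W_i$ are orthogonal martingale differences, $E(\sum_iW_i)^2=\sum_iEW_i^2\le\sum_iE(U_i^2\mathbf 1_{\{|U_i|>L\}})$, and the exponential-moment hypothesis yields $E(U_i^2\mathbf 1_{\{|U_i|>L\}})\le e^{-\delta L/2}E(U_i^2e^{\delta|U_i|/2})\le C e^{-\delta L/2}$ for a constant $C$ depending only on $\delta$ and $M$ (because $u^2e^{\delta u/2}\le c_\delta e^{\delta u}$). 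Chebyshev's inequality then gives $P(|\sum_iW_i|>nx/2)\le 4Ce^{-\delta L/2}/(nx^2)$. The decisive feature here is that the factor $n$ coming from the sum of variances cancels against the $(nx/2)^2$ in Chebyshev, so no polynomial-in-$n$ prefactor survives.

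It remains to choose $L$. Taking $L\asymp(nx^2)^{1/3}$ equates the two exponents, since then $nx^2/L^2\asymp(nx^2)^{1/3}\asymp\delta L$, and this is exactly what produces the cube-root rate: the bounded part contributes $\exp(-c_1(x^2n)^{1/3})$ and the jump part contributes $e^{-c_2(x^2n)^{1/3}}/(nx^2)$. The one place where the hypothesis $x\ge a/\sqrt n$ enters is precisely in absorbing the residual factor $1/(nx^2)$: since $x\ge a/\sqrt n$ is equivalent to $nx^2\ge a^2$, i.e.\ $(x^2n)^{1/3}\ge a^{2/3}$, the quantity $1/(nx^2)$ is bounded by $1/a^2$, so the jump bound becomes $\le(4C/a^2)e^{-c_2(x^2n)^{1/3}}$. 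Adding the two pieces yields the claim with $A=2+4C/a^2$ and $B=\min(c_1,c_2)$ depending only on $\delta,M,a$.

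The main obstacle, and the step I would be most careful about, is the treatment of the large-jump part. The naive route -- bounding $P(\exists i:|U_i|>L)$ by a union bound -- introduces a factor $n$ that cannot be absorbed into $e^{-B(x^2n)^{1/3}}$ uniformly in $n$ (the exponent stays of constant order when $x\asymp 1/\sqrt n$), and this is exactly the phenomenon that makes unbounded-martingale large deviations genuinely slower than in the bounded or independent case. Replacing the union bound by the variance computation above is what circumvents this, and the resulting estimate is the unbounded-martingale concentration inequality of \cite{lesignev01}, whose cube-root exponent is known to be optimal and hence cannot be improved by refining the argument.
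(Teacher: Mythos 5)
Your proof is correct, but it takes a genuinely different route from the paper. The paper does not re-derive the concentration bound at all: it quotes the intermediate inequality from the proof of Theorem 3.2 of \cite{lesignev01} (valid under $E\exp(|U_i|)\leq M$), handles a general $\delta$ by the rescaling $P(|\sum_i U_i|>nx)=P(|\sum_i \delta U_i|>n\delta x)$, and then -- exactly as you do in your final step -- uses $x\geq a/\sqrt{n}$ to bound the polynomial prefactors $(x^{-2}n^{-1})^{1/3}$, $(x^{-2}n^{-1})^{2/3}$, $x^{-2}n^{-1}$ by constants depending on $a$, so that everything is absorbed into $A$ and $B$. You instead give a self-contained derivation: conditional recentering after truncation at level $L$ (which correctly preserves the martingale-difference property, since $E(U_i\mid\mathcal{F}_{i-1})=0$), Azuma--Hoeffding for the bounded part, and -- crucially -- orthogonality of martingale differences plus Chebyshev for the large-jump part, with $L\asymp(nx^2)^{1/3}$ balancing the exponents $nx^2/L^2$ and $\delta L$. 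All the individual estimates check out ($|V_i|\leq 2L$, $EW_i^2\leq E(U_i^2\mathbf{1}_{\{|U_i|>L\}})\leq c_\delta M e^{-\delta L/2}$, and the cancellation of the factor $n$ in Chebyshev), and your diagnosis of why a naive union bound over $\{|U_i|>L\}$ fails is exactly right. What each approach buys: the paper's citation-based argument is shorter and inherits the sharp constants of the known-optimal inequality, but requires access to an expression inside someone else's proof; your argument is elementary and transparent about where the cube-root rate comes from, and is essentially a reconstruction of the mechanism underlying the Lesigne--Voln\'y bound itself, so the two proofs rest on the same idea at different levels of encapsulation. The only cosmetic point is the strict inequality in (\ref{lv}), which you can secure by enlarging $A$.
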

\begin{proof}
In the proof of their Theorem 3.2 \cite{lesignev01} show that if $E\exp(|U_i|)\leq M$ for all $i=1,...,n$, then for any $x>0$ and $t\in(0,1)$ one has\footnote{See the last expression in the proof of their Theorem 3.2.}
\begin{align}
&P\del[3]{\envert[2]{\sum_{i=1}^nU_i}>nx}\notag\\
<
&\del[3]{2+\frac{M}{(1-t)^2}\sbr[2]{\frac{1}{4}t^{4/3}(x^{-2}n^{-1})^{1/3}+t^{2/3}(x^{-2}n^{-1})^{2/3}+2x^{-2}n^{-1}}}e^{-(1/2)t^{2/3}(x^2n)^{1/3}}\label{lv1}
\end{align}
But note that $P\del[1]{\envert[0]{\sum_{i=1}^nU_i}>nx}=P\del[1]{\envert[0]{\sum_{i=1}^n(\delta U_i)}>n(\delta x)}$ where $\cbr{\delta U_i}_{i=1}^n$, by assumption, now satisfy the conditions of Theorem 3.2 in \cite{lesignev01} and so replacing $x$ by $\delta x$ in (\ref{lv1}) yields
\small{
\begin{align*}
&P\del[3]{\envert[2]{\sum_{i=1}^nU_i}>nx}\\
<
&\del[3]{2+\frac{M}{(1-t)^2}\sbr[2]{\frac{1}{4}t^{4/3}\delta^{-2/3}(x^{-2}n^{-1})^{1/3}+t^{2/3}\delta^{-4/3}(x^{-2}n^{-1})^{2/3}+2\delta^{-2}x^{-2}n^{-1}}}e^{-(1/2)t^{2/3}\delta^{2/3}(x^2n)^{1/3}}
\end{align*}
}
Restricting $x$ to be greater than $a/\sqrt{n}$, implying that $x^{-2}n^{-1}\leq 1/a^2$, and using that $M,t$ and $\delta$ are constants the conclusion of the lemma follows.
\end{proof}

For the proof of Lemma \ref{Aboundexp} below, we shall use Orlicz norms as defined in \cite{vdVW96}: Let $\psi$ be a non-decreasing convex function with $\psi(0)=0$. Then, the Orlicz norm of a random variable $X$ is given by
\begin{align*}
\enVert{X}_\psi=\inf\left\{C>0:E\psi\left(|X|/C\right)\leq 1\right\}
\end{align*}

where, as usual, $\inf \emptyset =\infty$. We will use Orlicz norms for  $\psi(x)=\psi_p(x)=e^{x^p}-1$ for $p=1,2$.

\begin{lemma}\label{Aboundexp}
Assume that assumptions A1 and A2b  are satisfied.  Then, for $a_{N,T}\geq e$
\begin{align*}
P\del[2]{\enVert[0]{X'\epsilon}_{\ell_\infty}\geq \lambda_{N,T}/2}\leq Ap^{1-B\log(a_{N,T})} \text{ for } \lambda_{N,T}=\sqrt{4NT\log(p)^3\log(a_{N,T})^3}\\
\text{and}\\
P\del[2]{\enVert[0]{D'\epsilon}_{\ell_\infty}\geq \mu_{N,T}/2}\leq AN^{1-B\ln(a_{N,T})} \text{ for } \mu_{N,T}=\sqrt{4T\log(N)^3\log(a_{N,T})^3}
\end{align*}
\end{lemma}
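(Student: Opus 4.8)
The plan is to reduce both displays, by a union bound over coordinates, to a single coordinatewise tail estimate obtained from the martingale concentration inequality of Lemma \ref{LV}. The only ingredient genuinely new relative to Lemma \ref{AboundLp} is the verification of the exponential-moment hypothesis of Lemma \ref{LV}, and this is where I would exploit the subgaussianity A2b rather than the mere existence of $r$ moments.

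First I would handle $X'\epsilon$. Fixing $1\le k\le p$ and writing $U_j=x_{j,k}\epsilon_j$, the $k$th entry of $X'\epsilon$ is $\sum_{j=1}^{NT}U_j$, and exactly as in the footnote to Lemma \ref{AboundLp} the pair $\{U_j,\mathcal F_j\}$ with $\mathcal F_j=\sigma(\{X,\epsilon_1,\dots,\epsilon_j\})$ is a martingale difference sequence under A1. To feed this into Lemma \ref{LV} I must exhibit $\delta,M>0$, uniform in $j$ and $k$, with $E e^{\delta|U_j|}\le M$, and I expect this to be the main obstacle. The route I would take is through Orlicz norms: A2b makes each factor $x_{j,k}$ and $\epsilon_j$ subgaussian with a $\psi_2$-norm bounded by a constant depending only on $C$ and $K$, and the elementary Orlicz inequality $\|UV\|_{\psi_1}\le\|U\|_{\psi_2}\|V\|_{\psi_2}$ --- which follows pointwise from $|uv|\le\tfrac12(u^2+v^2)$ together with Cauchy--Schwarz --- then shows that the product $U_j$ is subexponential with $\psi_1$-norm uniformly bounded. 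By the definition of $\|\cdot\|_{\psi_1}$ this delivers $E e^{|U_j|/\|U_j\|_{\psi_1}}\le2$, i.e. the hypothesis of Lemma \ref{LV} with $M=2$ and $\delta=1/\sup_{j,k}\|U_j\|_{\psi_1}$.

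With the hypothesis in hand I would apply Lemma \ref{LV} with $n=NT$ and $x=\lambda_{N,T}/(2NT)$. The one calculation worth recording is that the stated $\lambda_{N,T}$ makes the exponent clean:
\begin{align*}
x^2n=\frac{\lambda_{N,T}^2}{4NT}=\log(p)^3\log(a_{N,T})^3,\qquad\text{so}\qquad (x^2n)^{1/3}=\log(p)\log(a_{N,T}).
\end{align*}
Since $p,a_{N,T}\ge e$ one has $x\sqrt n\ge1$, so the admissibility requirement $x\ge a/\sqrt n$ of Lemma \ref{LV} is met (e.g. with $a=1$); the lemma then produces a per-coordinate bound $Ae^{-B\log(p)\log(a_{N,T})}=Ap^{-B\log(a_{N,T})}$, and a union bound over $k=1,\dots,p$ gives $Ap^{1-B\log(a_{N,T})}$ as required (the strict-versus-nonstrict inequality is immaterial and can be absorbed into the constants).

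The bound on $D'\epsilon$ follows the identical template but avoids the product step. The $i$th entry of $D'\epsilon$ is $\sum_{t=1}^T\epsilon_{i,t}$, which is a martingale difference sum by A1c, and here the exponential-moment condition is immediate since a subgaussian $\epsilon_{i,t}$ is a fortiori subexponential. Taking $n=T$ and $x=\mu_{N,T}/(2T)$ yields $(x^2n)^{1/3}=\log(N)\log(a_{N,T})$, hence the per-coordinate bound $AN^{-B\log(a_{N,T})}$, and a union bound over $i=1,\dots,N$ gives $AN^{1-B\log(a_{N,T})}$.
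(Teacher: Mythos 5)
Your proposal is correct and takes essentially the same approach as the paper's proof: identify the martingale structure under A1, establish a uniform exponential-moment bound for the summands, apply Lemma \ref{LV} with $a=1$ and $x=\lambda_{N,T}/(2NT)$ (resp.\ $x=\mu_{N,T}/(2T)$) so that $(x^2n)^{1/3}=\log(p)\log(a_{N,T})$ (resp.\ $\log(N)\log(a_{N,T})$), and finish with a union bound over coordinates. The only difference is cosmetic: you obtain the uniform $\psi_1$-bound on $x_{j,k}\epsilon_j$ from the product inequality $\enVert[0]{UV}_{\psi_1}\leq\enVert[0]{U}_{\psi_2}\enVert[0]{V}_{\psi_2}$, whereas the paper bounds the tail of the product directly, $P\del[1]{|x_{j,k}\epsilon_j|>t}\leq P\del[1]{|x_{j,k}|>\sqrt{t}}+P\del[1]{|\epsilon_j|>\sqrt{t}}\leq K\exp(-Ct)$, and then invokes Lemma 2.2.1 of \cite{vdVW96}; both yield the same hypothesis for Lemma \ref{LV}.
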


\begin{proof}
First note that for all $1\leq j\leq NT$ and $1\leq k\leq p$ one has for all $t>0$
\begin{align*}
P\del[1]{|x_{j,k}\epsilon_{j}|>t}
\leq
P\del[1]{|x_{j,k}|>\sqrt{t}}+P\del[1]{|\epsilon_{j}|>\sqrt{t}}
\leq
K\exp(-Ct)
\end{align*}
and so it follows from Lemma 2.2.1 in \cite{vdVW96} that $\enVert[0]{x_{j,k}\epsilon_j}_{\psi_1}\leq \frac{1+K}{C}$ and so $E\exp\del[1]{\frac{C}{1+K}|x_{j,k}\epsilon_{j}|}\leq 2$ by the definition of the Orlicz-norm. Hence, $\delta=\frac{C}{1+K}$ works in Lemma \ref{LV} for all $1\leq k\leq p$. Next, denote by $\mathcal{F}_n=\sigma\del[1]{\cbr{X, \epsilon_j,\ 1\leq j \leq n}}$ the $\sigma$-field generated by $X$ and $\epsilon_{j},\ 1\leq j \leq n$ and set $S_{n,k}=\sum_{j=1}^nx_{j,k}\epsilon_{j}$. Then it is clear that $\cbr[0]{(S_{n,k}, \mathcal{F}_n),\ 1\leq n\leq NT}$ is a martingale for all $1\leq k\leq p$ \footnote{See the argument in the proof of Lemma \ref{AboundLp}.}. From a union bound it follows from Lemma \ref{LV} (with $a=1$) that\footnote{Lemma \ref{LV} is applicable since $a_{N,T}$ and $p$ are assumed greater than $e$.}
\begin{align*}
P\del[2]{\enVert[0]{X'\epsilon}_{\ell_\infty}\geq \lambda_{N,T}/2}
=
P\del[2]{\enVert[0]{X'\epsilon}_{\ell_\infty}\geq \frac{\lambda_{N,T}/2}{NT}NT}
\leq
pAe^{-B\del[1]{\frac{\lambda_{NT}^2}{4NT}}^{1/3}}
=Ap^{1-B\log(a_{N,T})}
\end{align*}
Next, by the subgaussianity of $\epsilon_{i,t},\ 1\leq i\leq N,\ 1\leq t\leq T$, it follows from Lemma 2.2.1 in \cite{vdVW96} that $\enVert[0]{\epsilon_{i,t}}_{\psi_2}\leq \del[1]{\frac{1+K/2}{C}}^{1/2}$, and so $\enVert[0]{\epsilon_{i,t}}_{\psi_1}\leq \del[1]{\frac{1+K/2}{C}}^{1/2}\log(2)^{-1/2}$ by the second to last inequality on page 95 in \cite{vdVW96}. Hence, $E\exp\del[1]{\del[1]{\frac{C}{1+K/2}}^{1/2}\log(2)^{1/2}|\epsilon_{i,t}|}\leq 2$.\footnote{We note that this estimate is slightly suboptimal since we are not taking full advantage of the subgaussianity of the $\epsilon_{i,t}$ by merely using it to deduce subexponentiality and then invoking Lemma \cite{lesignev01}. One could use the full strength of the subgaussianity by strengthening $E\exp(|\epsilon|)\leq K$ to $E\exp(\epsilon^2)\leq K$ in Lemma 3.2 of \cite{lesignev01}. Doing so, and adjusting Lemma \ref{LV} accordingly yields that the exponent $1/3$ in (\ref{lv}) can be increased to $1/2$ and hence $\mu_{N,T}$ can in turns be reduced to $\sqrt{4T\log(N)^2\log(a_{N,T})^2}
$. As a third route, one could use Hoeffding's inequality in combination with a truncation of the $\epsilon_{i,t}$. This does not reduce $\mu_{N,T}$ significantly either.} Furthermore, for all $i=1,...,N$, $\cbr[0]{\epsilon_{i,t},\mathcal{F}_{i,t}}_{t=1}^T$ form a martingale difference sequence and so by the union bound and Lemma \ref{LV} \footnote{In principle, the constants $A$ and $B$ need not be the same as above but by simply using the worst ones they can be chosen to be identical. Also, we have used $a_{N,T}, N\geq e$.} (with $a=1$)
\begin{align*}
P\del[2]{\enVert[0]{D'e}_{\ell_\infty}\geq \mu_{N,T}/2}
\leq
NP\del[2]{\enVert[0]{D'e}_{\ell_\infty}\geq \frac{\mu_{N,T}/2}{T}T}
\leq
NAe^{-B\del[1]{\frac{\mu_{N,T}^2}{4T}}^{1/3}}
\leq
AN^{1-B\log(a_{N,T})}
\end{align*}
\end{proof}

\begin{lemma}\label{vdGB}
Let $A$ and $B$ be two positive semi-definite $(p+N)\times (p+N)$ matrices and assume that $A$ satisfies the restricted eigenvalue condition RE($s_1,s_2$) for some $\kappa_A$. Then, for $\delta=\max_{1\leq i,j\leq p+N}\envert[0]{A_{i,j}-B_{i,j}}$, one also has $\kappa_B^2\geq \kappa_A^2-16\delta (s_1+s_2)m_{N,T}^2$ where $m_{N,T}=\frac{\lambda_{N,T}}{\sqrt{N}\mu_{N,T}}\vee \frac{\sqrt{N}\mu_{N,T}}{\lambda_{N,T}}$
\end{lemma}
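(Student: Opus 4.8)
The plan is to treat $B$ as a perturbation of $A$ and to exploit the fact that the two matrices induce \emph{the same} restricted cone, since the cone appearing in (\ref{RE}) depends only on $\lambda_{N,T}$, $\mu_{N,T}$ and the partition, not on the matrix whose quadratic form is being minimized. Fix any $v=({v^1}',{v^2}')'\in\mathbb{R}^{p+N}\setminus\{0\}$ (with $v^1$ of length $p$ and $v^2$ of length $N$) that is feasible for the cone, together with witnessing sets $R_1,R_2$ satisfying $|R_1|\le s_1$, $|R_2|\le s_2$. Writing $v'Bv = v'Av + v'(B-A)v$ and using that $A$ satisfies $RE(s_1,s_2)$ with constant $\kappa_A$, I would first lower bound $v'Av\ge \kappa_A^2\|v\|^2$, and then control the perturbation entrywise via $|v'(B-A)v|\le \sum_{i,j}|A_{i,j}-B_{i,j}|\,|v_i||v_j|\le \delta\|v\|_{\ell_1}^2$. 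Thus everything reduces to bounding $\|v\|_{\ell_1}^2$ by a multiple of $\|v\|^2$, uniformly over the cone.

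The heart of the argument is therefore the cone computation, and this is where the two different penalty magnitudes enter. Set $\alpha=\lambda_{N,T}/\sqrt{NT}$ and $\beta=\mu_{N,T}/\sqrt{T}$, so that $\alpha/\beta=\lambda_{N,T}/(\sqrt{N}\mu_{N,T})$ and hence $m_{N,T}=(\alpha/\beta)\vee(\beta/\alpha)\ge 1$. Adding $\alpha\|v^1_{R_1}\|_{\ell_1}+\beta\|v^2_{R_2}\|_{\ell_1}$ to both sides of the cone inequality in (\ref{RE}) gives
\[
\alpha\|v^1\|_{\ell_1}+\beta\|v^2\|_{\ell_1}\le 4\alpha\|v^1_{R_1}\|_{\ell_1}+4\beta\|v^2_{R_2}\|_{\ell_1}.
\]
Dividing through by $\min(\alpha,\beta)$ and using that both $\alpha/\min(\alpha,\beta)$ and $\beta/\min(\alpha,\beta)$ are at most $m_{N,T}$, I obtain $\|v\|_{\ell_1}=\|v^1\|_{\ell_1}+\|v^2\|_{\ell_1}\le 4m_{N,T}\big(\|v^1_{R_1}\|_{\ell_1}+\|v^2_{R_2}\|_{\ell_1}\big)$.

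It then remains to pass from the $\ell_1$- to the $\ell_2$-norm. By Cauchy--Schwarz on the supports, $\|v^1_{R_1}\|_{\ell_1}\le\sqrt{s_1}\|v^1_{R_1}\|$ and $\|v^2_{R_2}\|_{\ell_1}\le\sqrt{s_2}\|v^2_{R_2}\|$, and a second application of Cauchy--Schwarz to the resulting two-term sum gives $\sqrt{s_1}\|v^1_{R_1}\|+\sqrt{s_2}\|v^2_{R_2}\|\le\sqrt{s_1+s_2}\,\|v\|$. Combining, $\|v\|_{\ell_1}\le 4m_{N,T}\sqrt{s_1+s_2}\,\|v\|$, hence $\|v\|_{\ell_1}^2\le 16 m_{N,T}^2(s_1+s_2)\|v\|^2$. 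Feeding this into the perturbation bound yields $v'Bv\ge\big(\kappa_A^2-16\delta(s_1+s_2)m_{N,T}^2\big)\|v\|^2$ for every feasible $v$; taking the minimum of $v'Bv/\|v\|^2$ over the cone then gives the claimed lower bound on $\kappa_B^2$.

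I expect the only real obstacle to be the asymmetric weighting $\alpha\neq\beta$ in the cone: with a single common penalty the classical van de Geer--B\"uhlmann perturbation lemma applies essentially verbatim, but here one must convert the \emph{weighted} $\ell_1$-constraint into a bound on the \emph{unweighted} $\|v\|_{\ell_1}$. The ``divide by $\min(\alpha,\beta)$'' step is precisely what produces the factor $m_{N,T}$, and it is the one place where care is needed; the remainder is routine bookkeeping with Cauchy--Schwarz and the observation that $A$ and $B$ share the same feasible cone.
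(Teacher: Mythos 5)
Your proposal is correct and follows essentially the same route as the paper: both arguments fix a vector in the (matrix-independent) restricted cone, convert the weighted cone constraint into the unweighted bound $\enVert{v}_{\ell_1}\leq 4m_{N,T}\sqrt{s_1+s_2}\,\enVert{v}$ --- the paper via $\ell_1$-operator norms of the diagonal weight matrices $V, V_c$, you via dividing by $\min(\alpha,\beta)$, which is the same weight-ratio bookkeeping --- and then apply the entrywise perturbation bound $|v'(A-B)v|\leq\delta\enVert{v}_{\ell_1}^2$ before minimizing over the cone. The constants and the final inequality agree exactly with the paper's proof.
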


\begin{proof}
Let $x_1$ be $p\times 1$, $x_2$ be $N\times 1$ and define $x=(x_1',x_2')'$ and assume that $\frac{\lambda_{N,T}}{\sqrt{NT}}\enVert[0]{x_{1J_1^c}}_{\ell_1}+\frac{\mu_{N,T}}{\sqrt{T}}\enVert[0]{x_{2J_2^c}}_{\ell_1}\leq 3\frac{\lambda_{N,T}}{\sqrt{NT}}\enVert[0]{x_{1J_1}}_{\ell_1}+3\frac{\mu_{N,T}}{\sqrt{T}}\enVert[0]{x_{2J_2}}_{\ell_1}$. Defining 
\begin{align*}
V=
\del{\begin{array}{cc}
\frac{\lambda_{N,T}}{\sqrt{NT}}I_{|J_1|} & 0\\
0 & \frac{\mu_{N,T}}{\sqrt{T}}I_{|J_2|}\\
\end{array}} \text{and } 
V_c=
\del{\begin{array}{cc}
\frac{\lambda_{N,T}}{\sqrt{NT}}I_{|J_1^c|} & 0\\
0 & \frac{\mu_{N,T}}{\sqrt{T}}I_{|J_2^c|}\\
\end{array}} 
\end{align*}
this can also be expressed as $\enVert{V_cx_{J^c}}_{\ell_1}\leq 3\enVert{Vx_J}_{\ell_1}$. For any (non-zero) $(p+N)\times 1$ vector $x$ satisfying this restriction one has
\begin{align*}
\enVert[1]{x_{J^c}}_{\ell_1}
=
\enVert[1]{V_c^{-1}V_cx_{J^c}}_{\ell_1}
\leq
\enVert[1]{V_c^{-1}}_{\ell_1}\enVert[1]{V_cx_{J^c}}_{\ell_1}
\leq
3\enVert[1]{V_c^{-1}}_{\ell_1}\enVert[1]{Vx_J}_{\ell_1}
\leq
3\enVert[1]{V_c^{-1}}_{\ell_1}\enVert[1]{V}_{\ell_1}\enVert{x_J}_{\ell_1}
\end{align*}
Since
\begin{align*}
\enVert[1]{V_c^{-1}}_{\ell_1}\enVert[1]{V}_{\ell_1}=\frac{\lambda_{N,T}}{\sqrt{N}\mu_{N,T}}\vee \frac{\sqrt{N}\mu_{N,T}}{\lambda_{N,T}}=:m_{N,T}
\end{align*}
one gets
\begin{align*}
\envert[0]{x'Ax-x'Bx}
&=
\envert[0]{x'(A-B)x}
\leq 
\enVert[0]{x}_{\ell_1}\enVert[0]{(A-B)x}_{\ell_\infty}
\leq
\delta\enVert[0]{x}_{\ell_1}^2
\leq 
\delta(\enVert[0]{x_J}_{\ell_1}+\enVert[0]{x_{J^c}}_{\ell_1})^2\\
&\leq
\delta \del[0]{1+3m_{N,T}}^2\enVert{x_J}_{\ell_1}^2
\leq
16\delta (s_1+s_2)m_{N,T}^2\enVert[0]{x_J}^2
\leq
16\delta (s_1+s_2)m_{N,T}^2\enVert[0]{x}^2
\end{align*} 
where the last estimate follows from the fact that $m_{N,T}\geq 1$ and Jensen's inequality. Hence, 
\begin{align*}
x'Bx
\geq
x'Ax - 16\delta (s_1+s_2)m_{N,T}^2\enVert[0]{x}^2
\end{align*}
or equivalently,
\begin{align*}
\frac{x'Bx}{x'x}
\geq
\frac{x'Ax}{x'x}-16\delta (s_1+s_2)m_{N,T}^2
\geq
\kappa_A^2-16\delta (s_1+s_2)m_{N,T}^2
\end{align*}
Minimizing the left hand side over $\cbr[0]{x\in \mathbb{R}^{p+N}\setminus \{0\}: \enVert[0]{V_cx_{J^c}}_{\ell_1}\leq 3\enVert[0]{Vx_J}_{\ell_1}}$ yields the claim.
\end{proof}
In the following two lemmas we shall use Lemma \ref{vdGB} with 
\begin{align*}
A=
\Gamma
=
\del{\begin{array}{cc}
E\del[1]{\frac{X'X}{NT}} & 0\\
0 & I_{N}\\
\end{array}} \text{ and }
B=
\Psi_{N,T}
=
\del{\begin{array}{cc}
\frac{X'X}{NT} & \frac{X'D}{\sqrt{N}T}\\
\frac{D'X}{\sqrt{N}T} & I_{N}\\
\end{array}}
\end{align*}
in order to establish that $\Psi_{N,T}$ satisfies the restricted eigenvalue condition with high probability. Furthermore, define
\begin{align}
\mathcal{\tilde{B}}_{N,T}=\cbr[2]{\max_{1\leq i,j\leq p+N}{|\Psi_{T,i,j}-\Gamma_{i,j}|}\leq \frac{\kappa^2}{32(s_1+s_2)m_{N,T}^2}}\label{tildeB}
\end{align}

\begin{lemma}\label{BboundLp}
Set $\lambda_{N,T}=4a_{N,T}p^{1/r}(NT)^{1/2}\max_{1\leq t\leq T}\enVert[0]{\epsilon_{1,t}}_{L_r}$ and \\$\mu_{N,T}=4a_{N,T}N^{1/r}T^{1/2}\max_{1\leq t\leq T}\enVert[0]{\epsilon_{1,t}}_{L_r}$. Under assumptions A1 and A2a, $P\del[1]{\kappa_{\Psi_T}^2\geq \kappa^2/2}\geq P(\mathcal{\tilde{B}}_{N,T})\geq 1-D_r\frac{(p^2+Np)(s_1+s_2)^{r/2}\del[0]{\frac{p}{N}\vee \frac{N}{p}}}{\kappa^rN^{r/4}}$ for a constant $D_r$ only depending on $r$.
\end{lemma}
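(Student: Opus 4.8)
I would first establish the purely deterministic inclusion $\tilde{\mathcal{B}}_{N,T}\subseteq\{\kappa^2_{\Psi_{N,T}}\geq\kappa^2/2\}$, which gives $P(\kappa^2_{\Psi_{N,T}}\geq\kappa^2/2)\geq P(\tilde{\mathcal{B}}_{N,T})$ at once. This is a direct application of Lemma \ref{vdGB} with $A=\Gamma$ and $B=\Psi_{N,T}$: since $\kappa^2=\kappa^2(s_1,s_2)$ is by definition the restricted eigenvalue of $\Gamma$ and is assumed positive, the lemma yields $\kappa^2_{\Psi_{N,T}}\geq\kappa^2-16\delta(s_1+s_2)m_{N,T}^2$ with $\delta=\max_{1\leq i,j\leq p+N}|\Psi_{N,T,i,j}-\Gamma_{i,j}|$. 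On $\tilde{\mathcal{B}}_{N,T}$ one has $\delta\leq\kappa^2/(32(s_1+s_2)m_{N,T}^2)$, so the subtracted term is at most $16\cdot\kappa^2/32=\kappa^2/2$, and hence $\kappa^2_{\Psi_{N,T}}\geq\kappa^2/2$ on that event. Taking probabilities completes this half.

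\textbf{Setup for the tail bound.} For the second inequality I would bound $P(\tilde{\mathcal{B}}_{N,T}^c)$ by a union bound over the entries of $\Psi_{N,T}-\Gamma$. First I record that, for the stated $\lambda_{N,T},\mu_{N,T}$, one has $\lambda_{N,T}/(\sqrt{N}\mu_{N,T})=(p/N)^{1/r}$, so $m_{N,T}=(p/N\vee N/p)^{1/r}$, giving $m_{N,T}^2=(p/N\vee N/p)^{2/r}$ and, crucially, $m_{N,T}^r=p/N\vee N/p$. Writing $\eta=\kappa^2/(32(s_1+s_2)m_{N,T}^2)$ for the threshold and applying Markov's inequality at exponent $r/2$ to each entry $W$, one gets $P(|W|>\eta)\leq\|W\|_{L_{r/2}}^{r/2}/\eta^{r/2}$, where $\eta^{-r/2}=32^{r/2}(s_1+s_2)^{r/2}(p/N\vee N/p)/\kappa^r$. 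This single factor already reproduces the $(s_1+s_2)^{r/2}$, the $(p/N\vee N/p)$ and the $\kappa^{-r}$ appearing in the claim, so everything reduces to showing $\|W\|_{L_{r/2}}\lesssim N^{-1/2}$ uniformly over the entries.

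\textbf{The two blocks.} The bottom-right block of $\Psi_{N,T}-\Gamma$ vanishes, leaving two nontrivial types. For a cross block entry $\frac{1}{\sqrt{N}T}\sum_{t}x_{i,t,k}$, the triangle inequality together with A2a ($\|x_{1,t,k}\|_{L_{r/2}}\leq\|x_{1,t,k}\|_{L_r}\leq 1$) gives $L_{r/2}$-norm at most $N^{-1/2}$, so each of these (order $Np$ many) contributes $N^{-r/4}\eta^{-r/2}$; here the required rate is produced directly by the $1/\sqrt{N}$ scaling. For an $X'X$ block entry I write it as $\frac{1}{N}\sum_{i}\xi_i$ with $\xi_i=\frac{1}{T}\sum_t(x_{i,t,k}x_{i,t,l}-E x_{i,t,k}x_{i,t,l})$, which are i.i.d. and mean zero across $i$ by A1a, and satisfy $\|\xi_i\|_{L_{r/2}}\leq 2$ by the triangle inequality and Cauchy--Schwarz (using A2a). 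As in Lemma \ref{AboundLp}, I would apply Rosenthal's inequality to the sum of the independent mean-zero $\xi_i$ to obtain $\|\frac{1}{N}\sum_i\xi_i\|_{L_{r/2}}\lesssim N^{-1/2}$, so these (order $p^2$ many) entries also contribute $N^{-r/4}\eta^{-r/2}$. Summing the two block contributions and absorbing the numerical and $r$-dependent constants into $D_r$ then delivers $P(\tilde{\mathcal{B}}_{N,T}^c)\leq D_r(p^2+Np)(s_1+s_2)^{r/2}(p/N\vee N/p)/(\kappa^r N^{r/4})$.

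\textbf{The main obstacle.} The crux is the concentration of the $X'X$ block at rate $N^{-1/2}$ in the $L_{r/2}$ norm, uniformly over the $p^2$ entries. The products $x_{i,t,k}x_{i,t,l}$ carry only $r/2$ moments (obtained from the $r$ moments of the covariates via Cauchy--Schwarz), so when $r$ is near $2$ the relevant exponent $r/2$ is near $1$: Rosenthal's inequality in its standard form requires exponent $\geq 2$, and the variance-type term it produces would in any case demand four covariate moments. Thus for $r\geq 4$ Rosenthal applies cleanly and its variance term yields the $N^{-1/2}$ rate, whereas for smaller $r$ one must instead invoke a Marcinkiewicz--Zygmund/von Bahr--Esseen bound for the independent (equivalently, martingale) differences across $i$ and check that the resulting rate still dominates the cross-block contribution. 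Keeping the product-moment bookkeeping clean and uniform across all $p^2$ entries is the delicate part; the algebraic first half and the cross block are routine by comparison.
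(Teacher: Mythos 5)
Your proof follows the paper's own argument essentially step for step: the same application of Lemma \ref{vdGB} with threshold $\kappa^2/(32(s_1+s_2)m_{N,T}^2)$, the same identification $m_{N,T}=(p/N\vee N/p)^{1/r}$ so that $m_{N,T}^r$ produces the $(p/N\vee N/p)$ factor, the same union bound over the $p^2+Np$ nontrivial entries, the same triangle-inequality-plus-Markov treatment of the $X'D$ block, and the same Rosenthal-across-$i$ argument for the centered $X'X$ block. The ``main obstacle'' you flag at the end is real but is equally unresolved in the paper itself: the paper derives its bound (\ref{Lraux}) under $r\geq 2$ and then invokes it ``with $r$ replaced by $r/2$'', which presupposes $r\geq 4$ for the standard Rosenthal inequality (and for $r\in[2,4)$ a von Bahr--Esseen substitute would only give rate $N^{1-r/2}$, not the claimed $N^{-r/4}$), so on this point your write-up is, if anything, more careful than the published proof.
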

\begin{proof}
By Lemma \ref{vdGB} it follows that ${\kappa_{\Psi_{N,T}}\geq \kappa^2/2}$ on $\tilde{\mathcal{B}}_{N,T}$ Since the lower right $N\times N$ blocks of $\Psi_{N,T}$ and $\Gamma$ are identical it suffices to bound the entries of $\frac{X'X}{NT}-E\del[1]{\frac{X'X}{NT}}$ and $\frac{X'D}{\sqrt{N}T}$. A typical element of $\frac{X'X}{NT}-E\del[1]{\frac{X'X}{NT}}$ is of the form $\frac{1}{N}\sum_{i=1}^N\del[1]{\frac{1}{T}\sum_{t=1}^T\sbr{x_{i,t,k}x_{i,t,l}-E(x_{i,t,k}x_{i,t,l})}}$ for some $k,l\in \cbr[0]{1,...,p}$. Next note that for any sequence of mean zero i.i.d. variables $Z_1,...,Z_N$ in $L_r$ it follows from Rosenthal's inequality that
\begin{align}
\enVert[4]{\sum_{i=1}^NZ_i}_{L_r}
&\leq
C_r\del[4]{\sbr[3]{\sum_{i=1}^NEZ_i^2}^{1/2}+\sbr[3]{E\max_{1\leq i\leq N}|Z_i|^r}^{1/r}}
\leq
C_r\del[4]{N^{1/2}\enVert{Z_1}_{L_2}+N^{1/r}\enVert{Z_1}_{L_r}}\notag\\
&\leq
2C_rN^{1/2}\enVert{Z_1}_{L_r}\label{Lraux}
\end{align}
Furthermore,
\begin{align*}
\enVert[3]{\frac{1}{T}\sum_{t=1}^T\sbr{x_{1,t,k}x_{1,t,l}-E(x_{1,t,k}x_{1,t,l})}}_{L_{r/2}}
&\leq
\max_{1\leq t\leq T}\enVert{x_{1,t,k}x_{1,t,l}-E(x_{1,t,k}x_{1,t,l})}_{L_{r/2}}\\
&\leq
2\max_{1\leq t\leq T}\enVert{x_{1,t,k}x_{1,t,l}}_{L_{r/2}}
\leq
2
\end{align*}    
where the last estimate follows from the Cauchy-Schwarz inequality. Using this in (\ref{Lraux}) (with $r$ replaced by $r/2$) yields
\begin{align*}
\enVert[3]{\frac{1}{N}\sum_{i=1}^N\del[2]{\frac{1}{T}\sum_{t=1}^T\sbr{x_{i,t,k}x_{i,t,l}-E(x_{i,t,k}x_{i,t,l})}}}_{L_{r/2}}
\leq 
4C_{r/2}N^{-1/2}.
\end{align*} 
Markov's inequality yields that for any $\epsilon>0$
\begin{align}
P\del[3]{\envert[3]{\frac{1}{N}\sum_{i=1}^N\del[2]{\frac{1}{T}\sum_{t=1}^T\sbr{x_{i,t,k}x_{i,t,l}-E(x_{i,t,k}x_{i,t,l})}}}>\epsilon}
\leq
\frac{(4C_{r/2})^{r/2}}{\epsilon^{r/2}N^{r/4}}\label{B1}
\end{align}
Next, consider a typical term in $\frac{X'D}{\sqrt{N}T}$. Such a term is on the form $\frac{\sum_{t=1}^Tx_{i,t,k}}{\sqrt{N}T}$ for $i=1,...,N$ and $k=1,...,p$. Since
\begin{align*}
\enVert[3]{\frac{1}{\sqrt{N}T}\sum_{t=1}^Tx_{i,t,k}}_{L_r}
\leq
\frac{1}{\sqrt{N}}\max_{1\leq t\leq T}\enVert{x_{i,t,k}}_{L_r}
\leq
\frac{1}{\sqrt{N}}
\end{align*}
it follows by Markov's inequality that for any $\epsilon>0$
\begin{align}
P\del[2]{\envert[1]{\frac{1}{\sqrt{N}T}\sum_{t=1}^Tx_{i,t,k}}>\epsilon}
\leq
\frac{1}{\epsilon^{r}N^{r/2}}
=
\frac{1}{(\epsilon^{r/2}N^{r/4})^2}\label{B2}
\end{align}
Combining (\ref{B1}) and (\ref{B2}) yields via a union bound over $(p^2+Np)$ terms
\begin{align*}
P\del[1]{\max_{1\leq i,j\leq p+N}|A_{i,j}-B_{i,j}|>\epsilon}
\leq
(p^2+Np)\del[3]{\frac{(4C_{r/2})^{r/2}}{\epsilon^{r/2}N^{r/4}}\vee \frac{1}{(\epsilon^{r/2}N^{r/4})^2}}
\leq
D_r\frac{p^2+Np}{\epsilon^{r/2}N^{r/4}}
\end{align*}
\footnote{Note that the first estimate in the display may be replaced by the slightly sharper estimate
\begin{align*}
P\del[1]{\max_{1\leq i,j\leq p+N}|A_{i,j}-B_{i,j}|>\epsilon}
\leq
p^2\frac{(4C_{r/2})^{r/2}}{\epsilon^{r/2}N^{r/4}}+ Np\frac{1}{(\epsilon^{r/2}N^{r/4})^2}
\end{align*}
However, for $p\geq N$ this will lead to no improvement asymptotically, while the improvement is minor for $N>p$.
}where the last estimate follows from the fact that without loss of generality (since otherwise the upper bound is greater than one) one may assume $\epsilon^{r/2}N^{r/4}\geq 1$ and so $\epsilon^{r/2}N^{r/4}\leq (\epsilon^{r/2}N^{r/4})^2$. $D_r=([4C_{r/2}]^{r/2}\vee 1)$ is a constant only depending on $r$. Using $\epsilon=\frac{\kappa^2}{32(s_1+s_2)m_{N,T}^2}$ yields the lemma upon noting that $m_{N,T}=\del[1]{\frac{p}{N}\vee \frac{N}{p}}^{1/r}$ and merging all constants into $D_r$.
\end{proof}

\begin{lemma}\label{Bboundexp}
Set $\lambda_{N,T}=\sqrt{4NT\log(p)^3\log(a_{N,T})^3}$ and $\mu_{N,T}=\sqrt{4T\log(N)^3\log(a_{N,T})^3}$. Furthermore, let $t=\frac{\kappa^2}{(s_1+s_2)\del[1]{\frac{
\ln(p)}{\ln(N)}\vee \frac{\ln(N)}{\ln(p)}}^3}$ and let $Nt^2\geq 1$. Then, under assumptions A1 and A2b), $P\del[1]{\kappa_{\Psi_T}^2\geq \kappa^2/2}\geq P(\mathcal{\tilde{B}}_{N,T})\geq 1-
A(p^2+Np) e^{-B(t^2N)^{1/3}}$ for absolute constants $A$ and $B$.
\end{lemma}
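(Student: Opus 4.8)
The plan is to mirror the proof of Lemma \ref{BboundLp}, replacing Rosenthal's inequality together with Markov's inequality by the martingale concentration bound of Lemma \ref{LV}. The first inequality $P(\kappa_{\Psi_{N,T}}^2\ge\kappa^2/2)\ge P(\mathcal{\tilde{B}}_{N,T})$ is immediate from Lemma \ref{vdGB} applied with $A=\Gamma$ and $B=\Psi_{N,T}$: on $\mathcal{\tilde{B}}_{N,T}$ the entrywise deviation $\delta=\max_{1\le i,j\le p+N}|\Psi_{N,T,i,j}-\Gamma_{i,j}|$ satisfies $\delta\le \kappa^2/[32(s_1+s_2)m_{N,T}^2]$, whence $\kappa_{\Psi_{N,T}}^2\ge\kappa^2-16\delta(s_1+s_2)m_{N,T}^2\ge\kappa^2/2$. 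It therefore suffices to lower bound $P(\mathcal{\tilde{B}}_{N,T})$, i.e. to control each entry of $\Psi_{N,T}-\Gamma$ at level $\epsilon:=\kappa^2/[32(s_1+s_2)m_{N,T}^2]$. A direct computation with the present $\lambda_{N,T},\mu_{N,T}$ gives $m_{N,T}=(\tfrac{\log p}{\log N}\vee\tfrac{\log N}{\log p})^{3/2}$, so that $\epsilon=t/32$ with $t$ as in the statement, and the applicability threshold $\epsilon\ge a/\sqrt N$ of Lemma \ref{LV} becomes exactly $Nt^2\ge1$ once $a$ is calibrated to $1/32$.

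Since the lower-right $N\times N$ blocks of $\Psi_{N,T}$ and $\Gamma$ coincide, only the $X'X$ block and the $X'D$ block must be handled. A generic entry of $\frac{X'X}{NT}-E(\frac{X'X}{NT})$ equals $\frac1N\sum_{i=1}^N Z_i$ with $Z_i=\frac1T\sum_{t=1}^T[x_{i,t,k}x_{i,t,l}-E(x_{i,t,k}x_{i,t,l})]$. By A1a) the $Z_i$ are i.i.d. and mean zero, hence form a martingale difference sequence with respect to $\mathcal{G}_i=\sigma(X_1,\dots,X_i)$. The crucial input to Lemma \ref{LV} is a uniform sub-exponential moment bound $E\exp(\delta|Z_i|)\le M$, which I would obtain exactly as in Lemma \ref{Aboundexp}: from A2b) and $P(|x_{i,t,k}x_{i,t,l}|>s)\le P(|x_{i,t,k}|>\sqrt s)+P(|x_{i,t,l}|>\sqrt s)\le Ke^{-Cs}$, Lemma 2.2.1 in \cite{vdVW96} yields an absolute bound on $\|x_{i,t,k}x_{i,t,l}\|_{\psi_1}$; centering inflates the $\psi_1$-norm by at most a constant factor, and the triangle inequality (convexity) for the Orlicz norm gives $\|Z_i\|_{\psi_1}\le\frac1T\sum_t\|x_{i,t,k}x_{i,t,l}-E(\cdot)\|_{\psi_1}\le\rho$ for an absolute constant $\rho$, i.e. $E\exp(|Z_i|/\rho)\le2$. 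Lemma \ref{LV} (with $n=N$, $a=1/32$) then gives $P(|\frac1N\sum_i Z_i|>\epsilon)<Ae^{-B(\epsilon^2N)^{1/3}}$, and since $\epsilon^2N=t^2N/1024$ this is at most $Ae^{-B(t^2N)^{1/3}}$ after absorbing the numerical factor into $B$.

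For the $X'D$ block a generic entry is $\frac{1}{\sqrt N\,T}\sum_{t=1}^T x_{i,t,k}=\frac{1}{\sqrt N}\bar x_{i,k}$. By A2b) and the triangle inequality for the $\psi_2$-norm, $\|\bar x_{i,k}\|_{\psi_2}\le\max_t\|x_{i,t,k}\|_{\psi_2}$ is bounded by an absolute constant (no independence over $t$ is needed, only the uniform sub-gaussianity of A2b), so $\frac{1}{\sqrt N}\bar x_{i,k}$ is sub-gaussian with $\psi_2$-norm $O(1/\sqrt N)$ and $P(|\frac{1}{\sqrt N}\bar x_{i,k}|>\epsilon)\le2e^{-c\epsilon^2N}$. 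As $\epsilon^2N=t^2N/1024$ is bounded below on the region $Nt^2\ge1$, this sub-gaussian tail is in particular dominated by $Ae^{-B(t^2N)^{1/3}}$ for suitable absolute constants, so the two blocks combine. A union bound over the at most $p^2$ entries of the $X'X$ block and the $Np$ entries of one copy of the off-diagonal $X'D$ block then yields $P(\mathcal{\tilde{B}}_{N,T}^c)\le A(p^2+Np)e^{-B(t^2N)^{1/3}}$, which is the claim.

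The main obstacle is the verification of the uniform sub-exponential moment bound $E\exp(\delta|Z_i|)\le M$ for the $X'X$ block: because the covariates may be arbitrarily dependent across $t$, one cannot exploit independence within $i$ and must control $\|Z_i\|_{\psi_1}$ purely through the triangle inequality for the Orlicz norm together with the uniform sub-gaussianity in A2b). A secondary but delicate point is the bookkeeping that reconciles the (a priori stronger than needed) sub-gaussian behaviour of the $X'D$ entries and of the i.i.d. $X'X$ sums with the single sub-exponential rate $e^{-B(t^2N)^{1/3}}$ that Lemma \ref{LV} delivers, together with the calibration $a=1/32$ that makes the hypothesis $Nt^2\ge1$ precisely the applicability condition of Lemma \ref{LV}.
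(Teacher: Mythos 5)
Your proposal is correct and follows essentially the same route as the paper's own proof: Lemma \ref{vdGB} reduces the claim to the event $\mathcal{\tilde{B}}_{N,T}$, the $X'X$ block is handled by bounding $\|x_{i,t,k}x_{i,t,l}\|_{\psi_1}$ via Lemma 2.2.1 of \cite{vdVW96} and applying Lemma \ref{LV} to the i.i.d.\ (hence martingale-difference) averages over $i$, the $X'D$ block by the $\psi_2$-norm sub-gaussian tail, followed by a union bound over the $p^2+Np$ entries. Your calibration $\epsilon=t/32$, $a=1/32$, with $Nt^2\geq 1$ as the applicability condition of Lemma \ref{LV}, and the absorption of the weaker sub-gaussian rate into the $e^{-B(t^2N)^{1/3}}$ envelope, are exactly the steps taken in the paper.
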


\begin{proof}
By Lemma \ref{vdGB} it follows that ${\kappa_{\Psi_{N,T}}^2\geq \kappa^2/2}$ on $\tilde{\mathcal{B}}_{N,T}$. Since the lower right $N\times N$ blocks of $\Psi_{N,T}$ and $\Gamma$ are identical it suffices to bound the entries of $\frac{X'X}{NT}-E\del[1]{\frac{X'X}{NT}}$ and $\frac{X'D}{\sqrt{N}T}$. A typical element of $\frac{X'X}{NT}-E\del[1]{\frac{X'X}{NT}}$ is of the form $\frac{1}{N}\sum_{i=1}^N\del[1]{\frac{1}{T}\sum_{t=1}^T\sbr{x_{i,t,k}x_{i,t,l}-E(x_{i,t,k}x_{i,t,l})}}$ for some $k,l\in \cbr[0]{1,...,p}$. 
First, note that for all $1\leq i\leq N$, $1\leq t\leq T$ and $1\leq k,l\leq p$ one has for all $\epsilon>0$
\begin{align*}
P\del[1]{|x_{i,t,k}x_{i,t,l}|>\epsilon}
\leq
P\del[1]{|x_{i,t,k}>\sqrt{\epsilon}}+P\del[1]{|x_{i,t,l}|>\sqrt{\epsilon}}
\leq
K\exp(-C\epsilon)
\end{align*}
and so it follows from Lemma 2.2.1 in \cite{vdVW96} that $\enVert[0]{x_{i,t,k}x_{i,t,l}}_{\psi_1}\leq \frac{1+K}{C}$. Next, note that by subadditivity of the Orlicz norm and Jensen's inequality
\begin{align*}
\enVert[3]{\frac{1}{T}\sum_{t=1}^T\sbr{x_{i,t,k}x_{i,t,l}-E(x_{i,t,k}x_{i,t,l})}}_{\psi_1}
\leq
2\max_{1\leq t\leq T}\enVert{x_{i,t,k}x_{i,t,l}}_{\psi_1} 
\leq
2\frac{1+K}{C}
\end{align*}
Hence, $E\exp(\frac{C}{2(1+K)}|x_{i,t,k}x_{i,t,l}|)\leq 2$. It now follows by the independence across $i=1,...,N$ (using Lemma \ref{LV}) there exists constants $A$ and $B$ such that for any $\epsilon>\frac{1}{32\sqrt{N}}$
\begin{align}
P\del[3]{\envert[2]{\frac{1}{N}\sum_{i=1}^N\del[1]{\frac{1}{T}\sum_{t=1}^T\sbr{x_{i,t,k}x_{i,t,l}-E(x_{i,t,k}x_{i,t,l})}}}\geq\epsilon}
\leq
Ae^{-B(\epsilon^2 N)^{1/3}}\label{C1}
\end{align} 
Next, consider a typical term in $\frac{X'D}{\sqrt{N}T}$. Such a term is on the form $\frac{\sum_{t=1}^Tx_{i,t,k}}{\sqrt{N}T}$ for $i=1,...,N$ and $k=1,...,p$. Since $\enVert[0]{x_{i,t,k}}_{\psi_2}\leq \del[1]{\frac{1+K/2}{C}}^{1/2}$ by Lemma 2.2.1 in \cite{vdVW96} one gets
\begin{align*}
\enVert[3]{\frac{1}{\sqrt{N}T}\sum_{t=1}^Tx_{i,t,k}}_{\psi_2}
\leq
\frac{1}{\sqrt{N}}\max_{1\leq t\leq T}\enVert{x_{i,t,k}}_{\psi_2}
\leq
\frac{1}{\sqrt{N}}\del[2]{\frac{1+K/2}{C}}^{1/2}:=\frac{M}{\sqrt{N}}.
\end{align*}
It follows by Markov's inequality and $1\wedge \psi_2(x)^{-1}=1\wedge (e^{x^2}-1)^{-1}\leq 2e^{-x^2}$ that for any $\epsilon>0$
\begin{align}
P\del[2]{\envert[1]{\frac{1}{\sqrt{N}T}\sum_{t=1}^Tx_{i,t,k}}>\epsilon}
\leq
1\wedge \frac{1}{e^{(\epsilon \sqrt{N}/M)^2}-1}
\leq
2e^{-(\epsilon \sqrt{N}/M)^2}
\leq
Ae^{-B\epsilon^2 N}\label{C2}
\end{align}
where the last estimate follows by choosing $A$ and $B$ sufficiently large/small for (\ref{C1}) and (\ref{C2}) both to be valid.
Combining (\ref{C1}) and (\ref{C2}) yields via a union bound over $(p^2+Np)$ terms
\begin{align*}
P\del[1]{\max_{1\leq i,j\leq p+N}|A_{i,j}-B_{i,j}|>\epsilon}
\leq
A(p^2+Np)\del[2]{e^{-B(\epsilon^2 N)^{1/3}}\vee e^{-B\epsilon^2 N}}
\end{align*}
Using $\epsilon=\frac{\kappa^2}{32(s_1+s_2)m_{N,T}^2}$ with $m_{N,T}=\frac{
\ln(p)^{3/2}}{\ln(N)^{3/2}}\vee \frac{\ln(N)^{3/2}}{\ln(p)^{3/2}}$ means that $\epsilon \geq \frac{1}{32\sqrt{N}}$ since $t^2N\geq 1$. Hence, 
\begin{align*}
P\del[1]{\max_{1\leq i,j\leq p+N}|A_{i,j}-B_{i,j}|>\epsilon}
&\leq
A(p^2+Np)\del[2]{e^{-B((1/32)^2t^2 N)^{1/3}}\vee e^{-B(1/32)^2t^2 N}}\\
&\leq
A\del[0]{p^2+Np}e^{-B(t^2 N)^{1/3}}
\end{align*}
where the $(1/32)^2$ have been merged into $B$ and we have used that $t^2N\geq 1$.
\end{proof}

\begin{proof}[Proof of Theorem \ref{thm2}]
$P\del[0]{\mathcal{A}_{N,T}\cap \mathcal{B}_{N,T}}\geq 1-2\del{\frac{C_r}{a_{N,T}}}^r-D_r\frac{(p^2+Np)(s_1+s_2)^{r/2}\del[0]{\frac{p}{N}\vee \frac{N}{p}}}{\kappa^rN^{r/4}}$ follows from Lemmas \ref{AboundLp} and \ref{BboundLp}. Hence, the estimates in Theorem \ref{thm1} are valid with at least this probability. Inserting the definitions of $\lambda_{N,T}$ and $\mu_{N,T}$ into (\ref{IQthm1}) and (\ref{IQ2thm1}) yields (\ref{IQ1thm2}) and (\ref{IQ2thm2}).
\end{proof}

\begin{proof}[Proof of Theorem \ref{thm3}]
The lower bound on $P\del[0]{\mathcal{A}_{N,T}\cap \mathcal{B}_{N,T}}$ follows by combining Lemmas \ref{Aboundexp} and \ref{Bboundexp}. Hence, the estimates in Theorem \ref{thm1} are valid with a probability bounded from below by this estimate. Inserting the definitions of $\lambda_{N,T}$ and $\mu_{N,T}$ into (\ref{IQthm1}) and (\ref{IQ2thm1}) yields (\ref{IQ1thm3}) and (\ref{IQ2thm3}).
\end{proof}

Before we prove Theorem \ref{lowerbound} below we define the weighted Lasso as the minimizer of the following objective function,
\begin{align}
\enVert{y-Z\gamma}^2+2\sum_{j=1}^{p+N}w_{j}|\gamma_j|\label{aLassoobj2}
\end{align}
where $w_j,\ j=1,...,p+N$ are the weights. Note that in the plain panel Lasso, $w_j=\lambda_{N,T}$ for all $j=1,...,p$ and $w_j=\mu_{N,T}$ for $j=p+1,...,p+N$.
From standard convex analysis we know that a vector $\tilde{\gamma}$ minimizes (\ref{aLassoobj2}) if and only if there exists a subgradient $v$ of $\enVert[0]{\gamma}_{\ell_1}$ such that
\begin{align}
-Z_j'(y-Z\tilde{\gamma})+w_jv_j=0 \text{ for all } j=1,...,p+N \label{FOC}
\end{align}
where $v_j=\sgn(\tilde{\gamma}_j)$ if $\tilde{\gamma}_j\neq 0$ and $v_j\in \sbr{-1,1}$ if $\tilde{\gamma}_j=0$.
The following Lemma will be used in the proof of Theorems \ref{lowerbound} and \ref{aLassoThm1}.
\begin{lemma}\label{wLassoLemma}
Suppose that $|v_j|<1$ for all $\tilde{\gamma}_j=0$ in (\ref{FOC}) and that $Z_J'Z_J$ is invertible. Then $\sgn(\tilde{\gamma})=\sgn(\gamma^*)$ if
\begin{align}
\sgn\del[2]{\gamma^*_J+\del{Z_J'Z_J}^{-1}\sbr{Z_J'\epsilon -r_J}}=\sgn(\gamma^*_J)\label{Cond1}
\end{align}
(here $r$ is the $(p+N)\times 1$ vector with $j$th entry $w_jv_j$) and
\begin{align}
\envert[1]{-Z_{j}'Z_J\del{Z_J'Z_J}^{-1}\sbr{Z_J'\epsilon -r_J}+Z_{j}'\epsilon}< w_j\label{Cond2}
\end{align}
for all $j\in J^c$
\end{lemma}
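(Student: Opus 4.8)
The plan is to use the primal--dual witness construction. I would build an explicit candidate pair $(\hat\gamma,\hat v)$ that is forced to have the correct sign pattern, verify that it satisfies the stationarity conditions (\ref{FOC}) with a valid subgradient, and then invoke uniqueness of the minimizer to conclude $\tilde\gamma=\hat\gamma$.

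First I would set $\hat\gamma_{J^c}=0$ and $\hat v_J=\sgn(\gamma^*_J)$, and define $\hat\gamma_J$ as the solution of the $J$-restricted stationarity equation $Z_J'(y-Z_J\hat\gamma_J)=r_J$. Since $Z_J'Z_J$ is invertible this yields $\hat\gamma_J=(Z_J'Z_J)^{-1}[Z_J'y-r_J]$, and substituting $y=Z\gamma^*+\epsilon=Z_J\gamma^*_J+\epsilon$ (using $\gamma^*_{J^c}=0$) gives the explicit expression $\hat\gamma_J=\gamma^*_J+(Z_J'Z_J)^{-1}[Z_J'\epsilon-r_J]$. Condition (\ref{Cond1}) says precisely that $\sgn(\hat\gamma_J)=\sgn(\gamma^*_J)$, so the choice $\hat v_J=\sgn(\gamma^*_J)$ is a legitimate subgradient entry on the support and the candidate already reproduces the correct signs there.

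Next I would define $\hat v_{J^c}$ through the off-support stationarity equations $w_j\hat v_j=Z_j'(y-Z_J\hat\gamma_J)$ for $j\in J^c$. Substituting the formula for $\hat\gamma_J$ and again $y-Z_J\gamma^*_J=\epsilon$, one finds $w_j\hat v_j=Z_j'\epsilon-Z_j'Z_J(Z_J'Z_J)^{-1}[Z_J'\epsilon-r_J]$, which is exactly the quantity bounded in (\ref{Cond2}). Hence (\ref{Cond2}) gives $|\hat v_j|<1$ for every $j\in J^c$, establishing strict dual feasibility off the support. At this point $(\hat\gamma,\hat v)$ satisfies all the equations in (\ref{FOC}) with a valid subgradient, so by convexity of the objective $\hat\gamma$ is a global minimizer.

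The main obstacle, and the step I would treat most carefully, is uniqueness: I must rule out a different minimizer so that the hypothesised minimizer $\tilde\gamma$ (whose subgradient is strictly feasible on its own zero set) must coincide with $\hat\gamma$. Here I would use the standard fact that all minimizers of (\ref{aLassoobj2}) share the same fitted value $Z\gamma$, hence the same residual and therefore the same subgradient $v_j=Z_j'(y-Z\gamma)/w_j$. For any index $j$ with $|\hat v_j|<1$ the stationarity condition forces $\gamma_j=0$ in every minimizer, since a nonzero entry would require $|v_j|=1$; because $|\hat v_j|<1$ on all of $J^c$, every minimizer is supported in $J$. Restricting to $J$, the relation $Z_J\gamma_J=Z_J\hat\gamma_J$ together with invertibility of $Z_J'Z_J$ (full column rank of $Z_J$) yields $\gamma_J=\hat\gamma_J$, so the minimizer is unique. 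Consequently $\tilde\gamma=\hat\gamma$ and $\sgn(\tilde\gamma)=\sgn(\gamma^*)$, as claimed.
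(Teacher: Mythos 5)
Your proof is correct and follows essentially the same route as the paper: both are primal--dual witness arguments that construct a candidate supported on $J$ from the restricted stationarity equation, identify (\ref{Cond1}) with correct signs on the support and (\ref{Cond2}) with strict dual feasibility on $J^c$, and then conclude by uniqueness of the minimizer. The only difference is that where the paper handles uniqueness by citing Lemma 2.1 of \cite{buhlmannvdg11} together with strict convexity of the restricted problem, you spell out that argument explicitly (common fitted values of all minimizers, hence common subgradient, hence zero entries off $J$, then full column rank of $Z_J$), which fills in precisely the cited detail.
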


\begin{proof}
The proof combines ideas from \cite{wainwright09} and \cite{zhouvdGB09}. Clearly, $\sgn(\tilde{\gamma})=\sgn(\gamma^*)$ if and only if i) $\tilde{\gamma}$ solves (\ref{FOC}) and ii) $\sgn(\tilde{\gamma})=\sgn(\gamma^*)$. Using $y=Z\gamma^*+\epsilon$ the first order condition (\ref{FOC}) is equivalent to
\begin{align*}
Z'Z(\tilde{\gamma}-\gamma^*)-Z'\epsilon+r=0
\end{align*} 
Using $\tilde{\gamma}_{J^c}=\gamma^*_{J^c}=0$ it follows by the invertibility of $Z_J'Z_J$ that
\begin{align}
\tilde{\gamma}_J-\gamma^*_J=\del{Z_J'Z_J}^{-1}\sbr{Z_J'\epsilon -r_J}\label{gammarel}
\end{align}
which yields $\sgn(\tilde{\gamma}_J)=\sgn(\gamma^*_J)$ under the stated conditions. Furthermore, we have
\begin{align*}
0=
Z_{J^c}'Z_J(\tilde{\gamma}_J-\gamma^*_J)-Z_{J^c}'\epsilon+r_{J^c}
=
Z_{J^c}'Z_J\del{Z_J'Z_J}^{-1}\sbr{Z_J'\epsilon -r_J}-Z_{J^c}'\epsilon+r_{J^c}
\end{align*}
Hence, we must have
\begin{align*}
w_jv_j=r_j=-Z_{j}'Z_J\del{Z_J'Z_J}^{-1}\sbr{Z_J'\epsilon -r_J}+Z_{j}'\epsilon
\end{align*}
for all $j\in J^c$ which means (using $|v_j|<1$)
\begin{align}
\envert[1]{-Z_{j}'Z_J\del{Z_J'Z_J}^{-1}\sbr{Z_J'\epsilon -r_J}+Z_{j}'\epsilon}< w_j
\end{align}
for all $j\in J^c$. Next, $|v_j|<1$ may be used to show that \textit{any} solution $\bar{\gamma}$ of the minimization problem must have $\bar{\gamma}_j=0$ if $\tilde{\gamma}_j$=0. This can be done by mimicking the argument in the proof of Lemma 2.1 in  \cite{buhlmannvdg11}. Finally, using that $\tilde{\gamma}_{J^c}=0$ and that $Z_J'Z_J$ is invertible (\ref{aLassoobj2}) is seen to be strictly convex and so $\tilde{\gamma}'=\del[1]{{\gamma^*}'+\del{Z_J'Z_J}^{-1}\sbr{Z_J'\epsilon -r_J}',0'}$ is indeed the only solution.
\end{proof}

\begin{proof}[Proof of Theorem \ref{lowerbound}]
By (\ref{gammarel}) one gets
\begin{align*}
S_{J,J}(\tilde{\gamma}_J-\gamma^*_J)=\del{S_{J,J}^{-1}Z_J'Z_JS_{J,J}^{-1}}^{-1}\sbr{S_{J,J}^{-1}Z_J'\epsilon -S_{J,J}^{-1}r_J}
\end{align*}
where $r_J$ is the $J\times 1$ vector with the first $|J_1|$ entries equaling $\sgn(\beta^*_{J_1})\lambda_{N,T}$ and the last $|J_2|$ entries equaling  $\sgn(c^*_{J_2})\mu_{N,T}$. This implies
\begin{align}
\enVert{S_{J,J}(\tilde{\gamma}_J-\gamma^*_J)}\geq \enVert[2]{\del{S_{J,J}^{-1}Z_J'Z_JS_{J,J}^{-1}}^{-1}S_{J,J}^{-1}r_J}-\enVert[2]{\del{S_{J,J}^{-1}Z_J'Z_JS_{J,J}^{-1}}^{-1}S_{J,J}^{-1}Z_J'\epsilon}\label{RT}
\end{align}
Next, note that using arguments similar to those in Lemma \ref{vdGB} it is seen that on $\tilde{\mathcal{B}}_{N,T}$ defined in (\ref{tildeB}) one has $\phi_{\max}\del[1]{S_{J,J}^{-1}Z_J'Z_JS_{J,J}^{-1}}\leq 2\phi_{\max}(\Gamma_{J,J})$ and so
\begin{align*}
\enVert[2]{\del{S_{J,J}^{-1}Z_J'Z_JS_{J,J}^{-1}}^{-1}S_{J,J}^{-1}r_J}^2
\geq 
\phi_{\min}^2\sbr[2]{\del[1]{S_{J,J}^{-1}Z_J'Z_JS_{J,J}^{-1}}^{-1}}\del[2]{\frac{|J_1|\lambda_{N,T}^2}{NT}+\frac{|J_2|\mu_{N,T}^2}{T}}\\
\geq
\frac{1}{\phi_{\max}^2\del[1]{S_{J,J}^{-1}Z_J'Z_JS_{J,J}^{-1}}}\del[2]{\frac{|J_1|\lambda_{N,T}^2}{NT}+\frac{|J_2|\mu_{N,T}^2}{T}}
\geq
\frac{1}{2\phi_{\max}^2\del[1]{\Gamma_{J,J}}}\del[2]{\frac{|J_1|\lambda_{N,T}^2}{NT}+\frac{|J_2|\mu_{N,T}^2}{T}}
\end{align*}
Turning to the second term in (\ref{RT}), by the independence of $Z_J$ and $\epsilon$ and the gaussianity of $\epsilon$, it follows that conditional on $Z_J$, $\del{S_{J,J}^{-1}Z_J'Z_JS_{J,J}^{-1}}^{-1}S_{J,J}^{-1}Z_J'\epsilon$ is 
gaussian with mean zero and covariance $\sigma^2\del[2]{S_{J,J}^{-1}Z_J'Z_JS_{J,J}^{-1}}^{-1}$. Hence, for any $s>0$ letting $\tilde{\epsilon}\in \mathbb{R}^{|J|}$ be a standard gaussian vector we have
\begin{align*}
P\del[2]{\enVert[2]{\del{S_{J,J}^{-1}Z_J'Z_JS_{J,J}^{-1}}^{-1}S_{J,J}^{-1}Z_J'\epsilon}^2\leq s}
=
P\del[2]{\tilde{\epsilon}'\sigma^2\del[2]{S_{J,J}^{-1}Z_J'Z_JS_{J,J}^{-1}}^{-1}\tilde{\epsilon}\leq s}
\end{align*}
Furthermore, 
\begin{align*}
\tilde{\epsilon}'\sigma^2\del[2]{S_{J,J}^{-1}Z_J'Z_JS_{J,J}^{-1}}^{-1}\tilde{\epsilon}
\leq
\sigma^2\tilde{\epsilon}'\tilde{\epsilon}\phi_{\max}\del[2]{\del[2]{S_{J,J}^{-1}Z_J'Z_JS_{J,J}^{-1}}^{-1}}
=
\frac{\sigma^2\tilde{\epsilon}'\tilde{\epsilon}}{\phi_{\min}\del[2]{S_{J,J}^{-1}Z_J'Z_JS_{J,J}^{-1}}}
\end{align*}
But since $\tilde{\epsilon}'\tilde{\epsilon}$ is $\chi^2(|J|)$ it follows from the $\chi^2$-concentration inequality in expression (54a) in \cite{wainwright09}\footnote{More precisely, (54a) in \cite{wainwright09} states that given a centered $\chi^2$-variable $X$ with $d$ degrees of freedom, then for any $t\in (0,1/2)$ one has $P(X\geq d(1+t))\leq \exp\del[1]{-\frac{3}{16}dt^2}$. Hence, for an uncentered $\chi^2$-variable $Y$ with $d$ degrees of freedom 
\begin{align*}
P(Y\geq 3d)\leq P(Y\geq d+(1+t)d)=P(Y-d\geq (1+t)d)=P(X\geq (1+t)d)\leq \exp\del[2]{-\frac{3}{16}dt^2}=\exp\del[2]{-c_1d}
\end{align*}
where the first estimate follows from $d\in (0,1/2)$ and the last equality by fixing some $t\in (0,1/2)$.
} that there exists a constant $c_1$ such that $P(\tilde{\epsilon}'\tilde{\epsilon}\geq 3|J|)\leq \exp\del[1]{-c_1|J|}$
Also, by arguments similar to the one in the proof of Lemma \ref{vdGB} one has on $\tilde{\mathcal{B}}_{N,T}$ that $1/\phi_{\min}\del[2]{S_{J,J}^{-1}Z_J'Z_JS_{J,J}^{-1}}\leq 2/\phi_{\min}(\Gamma_{J,J})$ and so 
\begin{align*}
P\del[2]{\tilde{\epsilon}'\sigma^2\del[2]{S_{J,J}^{-1}Z_J'Z_JS_{J,J}^{-1}}^{-1}\tilde{\epsilon}\leq s}
\geq
1-\exp\del[1]{-c_1|J|}-A(p^2+Np)e^{-B(t^2N)^{1/3}}
\end{align*}
for $s=3|J|\sigma^22/\phi_{\min}(\Gamma_{J,J})$. Hence, with probability at least $1-\exp\del[1]{-c_1|J|}-A(p^2+Np)e^{-B(t^2N)^{1/3}}
$ for constants $d_1, d_2$ and $c$
\begin{align*}
\enVert{S_{J,J}(\tilde{\gamma}_J-\gamma^*_J)}
&\geq
\sqrt{\frac{1}{2\phi_{\max}^2\del[1]{\Gamma_{J,J}}}\del[2]{\frac{|J_1|\lambda_{N,T}^2}{NT}+\frac{|J_2|\mu_{N,T}^2}{T}}}-\sqrt{3\sigma^2|J|\cdot 2/\phi_{\min}(\Gamma_{J,J})}\\
&\geq 
d_1(\sqrt{|J_1|}\lambda_{N,T}/\sqrt{NT}+\sqrt{|J_2|}\mu_{N,T}/\sqrt{T})-d_2(\sqrt{|J_1|}+\sqrt{|J_2|})\\
&=
d_1\sqrt{|J_1|}\lambda_{N,T}/\sqrt{NT}\del[2]{1-\frac{d_2\sqrt{NT}}{d_1\lambda_{N,T}}}+d_1\sqrt{|J_2|}\mu_{N,T}/\sqrt{T}\del[2]{1-\frac{d_2\sqrt{T}}{d_1\mu_{N,T}}}\\
&\geq
c\sqrt{|J_1|}\lambda_{N,T}/\sqrt{NT}+c\sqrt{|J_2|}\mu_{N,T}/\sqrt{T}\\
&=
c_2\xi_{N,T}
\end{align*}
where the second estimate used Jensen's inequality on the concave map $x\mapsto \sqrt{x}$ for the first term and the subadditivity of the same function on the second term (constants merged into $d_1$ and $d_2$, respectively). The existence of the constants $d_1$ and $d_2$ follows from the fact that $\phi_{\max}\del[1]{\Gamma_{J,J}}$ and $\phi_{\min}\del[1]{\Gamma_{J,J}}$ are bounded from above and below, respectively. The last inequality follows by choosing $a_{N,T}$ sufficiently large while the last equality follows from the definitions of $\lambda_{N,T}, \mu_{N,T}$ and  $\xi_{N,T}$ and the fact that $\kappa^2$ is bounded from below.
\end{proof}

\begin{proof}[Proof of Theorem \ref{LassoAsym}]
All notation is as in the statement of Theorem \ref{thm3}. We start with the consistency part. The conclusion follows from Theorem \ref{thm3} if we show that $P\del[0]{\mathcal{A}_{N,T}\cap \mathcal{B}_{N,T}}\to 1$ and that $\xi_{N,T}/\sqrt{NT},\ \xi_{N,T}/\sqrt{T}\to 0$. To establish that $P\del[0]{\mathcal{A}_{N,T}\cap \mathcal{B}_{N,T}}\to 1$ it suffices to show that $A(p^2+Np) e^{-B(t^2N)^{1/3}}\to 0$ Note that, ignoring constants,
\begin{align*}
t^2N=\frac{N}{\del[1]{N^c}^2\del[1]{\frac{N^b}{\ln(N)}\vee \frac{\ln(N)}{N^b}}^6}=N^{1-2c-6b}\ln(N)^6\rightarrow \infty
\end{align*} 
because $6b+2c\leq 9b+2c< 1$. Since $t^2N\to \infty$ and $p$ increases exponentially in $N$ it is enough to show that $p^2e^{-B(t^2N)^{1/3}}\to 0$. But this is the case, since
\begin{align*}
p^2e^{-B(t^2N)^{1/3}}=\exp(2N^b)\exp(-BN^{(1/3-(2/3)c-2b)}\ln(N)^2)\to 0
\end{align*}
because $9b+2c\leq 1$. Next, note that, ignoring constants, $\xi_{N,T}=\log(N)^{3/2}N^{(3/2)b}N^{c/2}+\log(N)^3N^{c/2}$ which implies that
\begin{align*}
\xi_{N,T}/\sqrt{NT}=\log(N)^{3/2}N^{(3/2)b+c/2-1/2-(1/2)a}+\log(N)^3N^{c/2-1/2-(1/2)a}\to 0
\end{align*}
since $3b+c<1+a$. Similarly,
\begin{align*}
\xi_{N,T}/\sqrt{T}=\log(N)^{3/2}N^{(3/2)b+c/2-(1/2)a}+\log(N)^3N^{c/2-(1/2)a}\to 0
\end{align*}
since $3b+c<a$. 

Regarding the second part we have already established that $P\del[0]{\mathcal{A}_{N,T}\cap \mathcal{B}_{N,T}}\to 1$ since $9b+2c< 1$. Hence, $\enVert[0]{\hat{\beta}-\beta^*}\leq \xi_{N,T}/\sqrt{NT}$ with probability tending to one.  But $\hat{\beta}_j=0$ for some $j\in J_1$ implies $\enVert[0]{\hat{\beta}-\beta^*}>\xi_{N,T}/\sqrt{NT}$ from a certain step and onwards. This is a contradiction and so it can't be the case that $\hat{\beta}_j=0$ for any $j\in J_1$. A similar argument applies to $\hat{c}_i$ for $i\in J_2$.
\end{proof}

\begin{lemma}\label{CLp}
Under assumption A1) and A2a) 
\begin{enumerate}
\item $P(\mathcal{C}_{1,N,T})\geq 1-\frac{2}{a_{N,T}^{r/2}}$ for $K_{1,N,T}=|J_1^c|^{2/r}|J_1|^{2/r}(NT)^{1/2}a_{N,T}$
\item $P(\mathcal{C}_{2,N,T})\geq 1-\frac{1}{a_{N,T}^{r}}$ for $K_{2,N,T}=|J_1|^{1/r}|J_2^c|^{1/r}T^{1/2}a_{N,T}$
\end{enumerate}
\end{lemma}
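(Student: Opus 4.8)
The plan is to bound $P(\mathcal{C}_{1,N,T}^c)$ and $P(\mathcal{C}_{2,N,T}^c)$ by controlling a suitable moment of each sum appearing inside the defining maxima and then applying Markov's inequality, exactly in the spirit of the proof of Lemma \ref{AboundLp}. The one structural difference from $\mathcal{A}_{N,T}$ is that the sums here are \emph{not} centered; under the normalization $\max_{1\leq t\leq T}E|x_{1,t,k}|^r\leq 1$ of A2a) they are bounded directly in $L_{r/2}$ or $L_r$, which is exactly why the thresholds $K_{1,N,T}$ and $K_{2,N,T}$ carry a deterministic $(NT)^{1/2}$ (resp. $T^{1/2}$) factor rather than a variance-type $\sqrt{N}$ scaling.

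For part (1) I would write $\mathcal{C}_{1,N,T}^c$ as the union of the events that the first or the second maximum exceeds $K_{1,N,T}$ and treat the two by a union bound. The crucial point for the first maximum is that the cross-product $x_{i,t,k}x_{i,t,l}$ possesses only $r/2$ moments, so one must work in $L_{r/2}$: by the triangle inequality and Cauchy--Schwarz, $\|x_{1,t,k}x_{1,t,l}\|_{L_{r/2}}\leq \|x_{1,t,k}\|_{L_r}\|x_{1,t,l}\|_{L_r}\leq 1$, whence $\bigl\|\tfrac{1}{\sqrt{NT}}\sum_{i,t}x_{i,t,k}x_{i,t,l}\bigr\|_{L_{r/2}}\leq \sqrt{NT}$. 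Applying the maximal bound $\|\max_{1\leq m\leq M}U_m\|_{L_{r/2}}\leq M^{2/r}\max_m\|U_m\|_{L_{r/2}}$ (as in Lemma \ref{AboundLp}) over the $|J_1^c||J_1|$ index pairs gives an $L_{r/2}$-bound of $|J_1^c|^{2/r}|J_1|^{2/r}\sqrt{NT}$, and Markov's inequality with exponent $r/2$ against $K_{1,N,T}$ returns exactly $1/a_{N,T}^{r/2}$. For the second maximum I would instead work in $L_r$, where $\|x_{i,t,k}\|_{L_r}\leq 1$ yields $\bigl\|\tfrac{1}{\sqrt{T}}\sum_t x_{i,t,k}\bigr\|_{L_r}\leq \sqrt{T}$; the maximum over the $|J_2||J_1^c|$ indices contributes a factor $(|J_2||J_1^c|)^{1/r}$, and Markov with exponent $r$ against the same $K_{1,N,T}$ produces the bound $|J_2|/\bigl(|J_1^c||J_1|^2 N^{r/2}a_{N,T}^r\bigr)$. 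Since $|J_2|\leq N$, $r\geq 2$ (so $N^{r/2-1}\geq 1$) and $a_{N,T}\geq 1$, this is at most $1/a_{N,T}^{r/2}$, and summing the two contributions gives $P(\mathcal{C}_{1,N,T}^c)\leq 2/a_{N,T}^{r/2}$.

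For part (2) the identical $L_r$ computation applies verbatim to $\tfrac{1}{\sqrt{T}}\sum_t x_{i,t,k}$, now with the maximum ranging over the $|J_2^c||J_1|$ pairs $(i,k)$ with $i\in J_2^c$ and $k\in J_1$. The $L_r$-bound is $(|J_2^c||J_1|)^{1/r}\sqrt{T}$, and Markov with exponent $r$ against $K_{2,N,T}=|J_1|^{1/r}|J_2^c|^{1/r}T^{1/2}a_{N,T}$ cancels the dependence on $|J_1|$, $|J_2^c|$ and $T$ completely, leaving precisely $1/a_{N,T}^r$.

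The calculations are routine and the only genuine decision is matching the moment exponent to the object at hand: exponent $r/2$ for the quadratic cross-terms $x_{i,t,k}x_{i,t,l}$, which by Cauchy--Schwarz inherit only $r/2$ finite moments, and exponent $r$ for the linear averages $\sum_t x_{i,t,k}$. Keeping these two scales separate, and then crudely absorbing the (strictly smaller) linear contribution into the $2/a_{N,T}^{r/2}$ budget via $|J_2|\leq N$ and $r\geq 2$, is the only bookkeeping step that requires care; everything else follows from the A2a) normalization and Markov's inequality.
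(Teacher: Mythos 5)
Your proof is correct and follows essentially the same route as the paper's: the $L_{r/2}$ bound via Cauchy--Schwarz for the cross-products and the $L_r$ bound for the linear sums, the $M^{1/q}$ maximal inequality over the index pairs, Markov's inequality with the matching exponent, and the crude absorption of the linear term into the $2/a_{N,T}^{r/2}$ budget using $|J_2|\leq N$, $r\geq 2$ and $a_{N,T}\geq 1$. No gaps; this is the paper's argument.
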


\begin{proof}
First, note that
\begin{align*}
\enVert[3]{\frac{1}{\sqrt{NT}}\sum_{i=1}^{N}\sum_{t=1}^Tx_{i,t,k}x_{i,t,l}}_{L_{r/2}}
&\leq
\sqrt{NT}\max_{1\leq t\leq T}\enVert{x_{1,t,k}x_{1,t,l}}_{L_{r/2}}
\leq
\sqrt{NT}
\end{align*}    
where the last estimate follows from the Cauchy-Schwarz inequality. Hence,\\ $\enVert[1]{\max_{k\in J_1^c}\max_{l\in J_1}\frac{1}{\sqrt{NT}}\sum_{i=1}^{N}\sum_{t=1}^Tx_{i,t,k}x_{i,t,l}}_{L_{r/2}}\leq |J_1^c|^{2/r}|J_1|^{2/r}\sqrt{NT}$. It follows from Markov's inequality that 
\begin{align*}
P\del[2]{\max_{k\in J_1^c}\max_{l\in J_1}\frac{1}{\sqrt{NT}}\sum_{i=1}^{N}\sum_{t=1}^Tx_{i,t,k}x_{i,t,l}
\geq 
K_{1,N,T}}\leq \frac{|J_1^c||J_1|(NT)^{r/4}}{K_{1,N,T}^{r/2}}
=
\frac{1}{a_{N,T}^{r/2}}
\end{align*}
Next, 
\begin{align*}
\enVert[3]{\frac{1}{\sqrt{T}}\sum_{t=1}^Tx_{i,t,k}}_{L_r}
\leq
\sqrt{T}\max_{1\leq t\leq T}\enVert{x_{i,t,k}}_{L_r}
\leq
\sqrt{T}
\end{align*}
This implies, $\enVert[1]{\max_{i\in J_2}\max_{k\in J_1^c}\frac{1}{\sqrt{T}}\sum_{t=1}^Tx_{i,t,k}}_{L_r}\leq |J_1^c|^{1/r}|J_2|^{1/r}\sqrt{T}$ and Markov's inequality yields
\begin{align*}
P\del[2]{\max_{i\in J_2}\max_{k\in J_1^c}\frac{1}{\sqrt{T}}\sum_{t=1}^Tx_{i,t,k}\geq K_{1,N,T}}
\leq
\frac{|J_1^c||J_2|T^{r/2}}{K_{1,N,T}^r}
=
\frac{|J_2|}{|J_1^c||J_1|^2N^{r/2}a_{N,T}^r}
\leq
\frac{1}{a_{N,T}^{r/2}}
\end{align*}
where the last estimates follows from $|J_2|\leq N^{r/2}$ and $a_{N,T}\geq 1$. The conclusion of the first part of the lemma now follows by a union bound. The second part of the lemma is proved in a similar manner.
\end{proof}

\begin{lemma}\label{Cexp}
Under assumption A1) and A2b) 
\begin{enumerate}
\item $P(\mathcal{C}_{1,N,T})\geq 1-\frac{4}{a_{N,T}}$ for $K_{1,N,T}=A\log(1+|J_1^c|)\log(e+|J_1|)\sqrt{NT}\log(a_{N,T})$
\item $P(\mathcal{C}_{2,N,T})\geq 1-\frac{2}{a_{N,T}}$ for $K_{2,N,T}=A\log(1+|J_1|)\log(1+|J_2^c|)\sqrt{T}\log(a_{N,T})$
\end{enumerate}
for a constant $A>0$.
\end{lemma}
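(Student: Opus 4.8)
The plan is to follow the template of the proof of Lemma \ref{CLp}, replacing the $L_{r/2}$-Markov bounds used there with Orlicz-norm maximal inequalities suited to the subexponential tails produced by assumption A2b). Three quantities must be controlled: the double maximum $\max_{k\in J_1^c}\max_{l\in J_1}\frac{1}{\sqrt{NT}}\sum_{i,t}x_{i,t,k}x_{i,t,l}$ and the double maximum $\max_{i\in J_2}\max_{k\in J_1^c}\frac{1}{\sqrt{T}}\sum_t x_{i,t,k}$ for part (1), and $\max_{i\in J_2^c}\max_{k\in J_1}\frac{1}{\sqrt{T}}\sum_t x_{i,t,k}$ for part (2). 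In contrast to the concentration arguments used for $\tilde{\mathcal{B}}_{N,T}$, here the sums are \emph{not} centered, so I will not attempt a Bernstein-type concentration; instead a crude triangle-inequality bound on the relevant Orlicz norm already yields the $\sqrt{NT}$- and $\sqrt{T}$-scalings that $K_{1,N,T}$ and $K_{2,N,T}$ permit.

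For the product terms I reuse the computation from the proof of Lemma \ref{Bboundexp}: from $P(|x_{i,t,k}x_{i,t,l}|>\epsilon)\le K e^{-C\epsilon}$ and Lemma 2.2.1 in \cite{vdVW96} one has $\enVert[0]{x_{i,t,k}x_{i,t,l}}_{\psi_1}\le (1+K)/C$ uniformly in $i,t,k,l$. Since $\enVert[0]{\cdot}_{\psi_1}$ is a norm, subadditivity gives
\begin{align*}
\enVert[3]{\frac{1}{\sqrt{NT}}\sum_{i=1}^N\sum_{t=1}^T x_{i,t,k}x_{i,t,l}}_{\psi_1}
\le
\frac{1}{\sqrt{NT}}\,NT\,\frac{1+K}{C}
=
\sqrt{NT}\,\frac{1+K}{C}
\end{align*}
for every fixed pair $(k,l)$; crucially this requires no independence across $t$. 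Applying the maximal inequality for Orlicz norms (Lemma 2.2.2 in \cite{vdVW96}), which holds irrespective of the dependence structure, over the $|J_1^c|\,|J_1|$ index pairs then yields an Orlicz-norm bound of order $\log(1+|J_1^c||J_1|)\sqrt{NT}$, and I bound $\log(1+|J_1^c||J_1|)\le \log(1+|J_1^c|)+\log(e+|J_1|)\le \log(1+|J_1^c|)\log(e+|J_1|)$ up to a constant absorbed into $A$.

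The passage from Orlicz norm to tail probability is by Markov: $\enVert[0]{Y}_{\psi_1}\le M$ gives $P(|Y|>t)\le 2e^{-t/M}$. Taking $t=K_{1,N,T}$ with its extra $\log(a_{N,T})$ factor and $A$ large enough forces $t/M\ge \log(a_{N,T})$, so each double maximum exceeds $K_{1,N,T}$ with probability at most $2/a_{N,T}$. For the term $\frac{1}{\sqrt{T}}\sum_t x_{i,t,k}$ I proceed identically, using $\enVert[0]{x_{i,t,k}}_{\psi_1}\lesssim 1$ (via the subgaussian bound of Lemma \ref{Bboundexp} and $\psi_2\Rightarrow\psi_1$) to get an Orlicz bound of order $\sqrt{T}\log(1+|J_2||J_1^c|)$, which is dominated by $K_{1,N,T}$ because $\sqrt{T}\log(1+N)\le \sqrt{NT}$. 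A union bound over the two pieces of $\mathcal{C}_{1,N,T}$ gives $P(\mathcal{C}_{1,N,T}^c)\le 4/a_{N,T}$, proving part (1); part (2) involves only a single maximum (over $J_2^c\times J_1$) and hence $P(\mathcal{C}_{2,N,T}^c)\le 2/a_{N,T}$.

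The step I expect to require the most care is the bookkeeping of the logarithmic factors: the maximal inequality naturally produces $\log(1+|J_1^c||J_1|)$ (a sum of logs), whereas $K_{1,N,T}$ is stated with the product $\log(1+|J_1^c|)\log(e+|J_1|)$, and I must check that the product dominates the sum up to the constant $A$ (using $\log(e+|J_1|)\ge 1$) and that the $|J_2|$-dependent logarithm arising from the second maximum in $\mathcal{C}_{1,N,T}$ is genuinely swallowed by the $\sqrt{NT}$ (versus $\sqrt{T}$) scaling. The conceptual point worth stressing is that, because $K_{1,N,T}$ already tolerates $\sqrt{NT}$ growth, no centering or sharp concentration is needed at all; everything else is a direct transcription of the argument in Lemma \ref{CLp} with $L_{r/2}$ norms replaced by $\psi_1$ Orlicz norms.
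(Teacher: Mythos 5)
Your proposal is correct and follows essentially the same route as the paper's proof: uncentered $\psi_1$-Orlicz bounds via the triangle inequality (no centering or concentration, exactly as you stress), the Orlicz maximal inequality of Lemma 2.2.2 in van der Vaart and Wellner, conversion to tails via Markov's inequality with $1\wedge(e^x-1)^{-1}\leq 2e^{-x}$, and a union bound, with the $\sqrt{N}$ slack in $K_{1,N,T}$ absorbing the $\log(1+|J_2|)$ factor just as in the paper. The only cosmetic difference is that you apply the maximal inequality once over all $|J_1^c|\,|J_1|$ index pairs (yielding a sum of logarithms, then dominated by the product) whereas the paper iterates it over the two indices to get the product of logarithms directly; this changes nothing of substance.
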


\begin{proof}
First, note that 
\begin{align*}
\enVert[3]{\frac{1}{\sqrt{NT}}\sum_{i=1}^{N}\sum_{t=1}^Tx_{i,t,k}x_{i,t,l}}_{\psi_1}
\leq
\sqrt{NT}\max_{1\leq t\leq T}\enVert{x_{1,t,k}x_{1,t,l}}_{\psi_1}
\leq
\sqrt{NT}\frac{1+K}{C}
\end{align*}    
where the last estimate follows from $\enVert{x_{1,t,k}x_{1,t,l}}_{\psi_1}\leq \frac{1+K}{C}:=A$ as argued in the proof of Lemma \ref{Bboundexp}. Hence, $\enVert[1]{\max_{k\in J_1^c}\max_{l\in J_1}\frac{1}{\sqrt{NT}}\sum_{i=1}^{N}\sum_{t=1}^Tx_{i,t,k}x_{i,t,l}}_{\psi_1}\leq A\log(1+|J_1^c|)\log(e+|J_1|)\sqrt{NT}$. By Markov's inequality, the definition of the Orlicz norm, and the fact that $1\wedge \psi(x)^{-1}=1\wedge (e^x-1)^{-1}\leq 2e^{-x}$,
\begin{align*}
&P\del[2]{\max_{k\in J_1^c}\max_{l\in J_1}\frac{1}{\sqrt{NT}}\sum_{i=1}^{N}\sum_{t=1}^Tx_{i,t,k}x_{i,t,l}
\geq K_{1,N,T}}\\
&\leq
1 \wedge \frac{1}{\exp(K_{1,N,T}/A\log(1+|J_1^c|)\log(1+|J_1|)\sqrt{NT})-1}
=
\frac{2}{a_{N,T}}
\end{align*}
Next, since $x_{i,t,k}$ is subgaussian it is also subexponential, and so there exists a constant $A>0$ such that 
\begin{align*}
\enVert[3]{\frac{1}{\sqrt{T}}\sum_{t=1}^Tx_{i,t,k}}_{\psi_1}
\leq
\sqrt{T}\max_{1\leq t\leq T}\enVert{x_{i,t,k}}_{\psi_1}
\leq
\sqrt{T}A
\end{align*}
This implies, $\enVert[1]{\max_{i\in J_2}\max_{k\in J_1^c}\frac{1}{\sqrt{T}}\sum_{t=1}^Tx_{i,t,k}}_{\psi_1}\leq A\log(1+|J_1^c|)\log(1+|J_2|)\sqrt{T}$ and Markov's inequality yields by similar arguments as above\footnote{The constant $A$ may take different values throughout.}
\begin{align*}
&P\del[2]{\max_{i\in J_2}\max_{k\in J_1^c}\frac{1}{\sqrt{T}}\sum_{t=1}^Tx_{i,t,k}\geq K_{1,N,T}}\\
&\leq
1\wedge\frac{1}{\exp(K_{1,N,T}/A\log(1+|J_1^c|)\log(1+|J_2|)\sqrt{T})-1}\\
&\leq
2\exp\del[2]{-\frac{A\log(1+|J_1^c|)\log(e+|J_1|)\sqrt{NT}\log(a_{N,T})
}{A\log(1+|J_1^c|)\log(1+|J_2|)\sqrt{T}}}
\leq
\frac{2}{a_{N,T}}
\end{align*}
where the last estimate follows from $\log(1+|J_2|)\leq N^{1/2}$, $\log(e+|J_1|)\geq 1$ and $a_{N,T}\geq 1$. The conclusion of the first part of the lemma now follows by a union bound. The second part of the lemma is proved in a similar manner.
\end{proof}

Before we prove Theorem \ref{aLassoThm1} note that $\mathcal{\tilde{B}}_{N,T}\subseteq \mathcal{B}_{N,T}$ (see the definition of $\tilde{B}_{N,T}$ in (\ref{tildeB})) as already argued in the proofs of Lemmas \ref{BboundLp} and \ref{Bboundexp}. Furthermore, an argument similar to the one in Lemma \ref{vdGB} reveals that $\mathcal{D}_{N,T}=\cbr[0]{\phi_{\min}(\Psi_{J,J})\geq \frac{1}{2}\phi_{\min}(\Gamma_{J,J})}$ occurs if the maximal entry of $|\Psi_{J,J}-\Gamma_{J,J}|$ is less than or equal to $\frac{\phi_{\min}(\Gamma_{J,J})}{2(s_1+s_2)}$. But this latter event clearly contains $\mathcal{\tilde{B}}_{N,T}$ and so $\mathcal{\tilde{B}}_{N,T}\subseteq \mathcal{D}_{N,T}$.

\begin{proof}[Proof of Theorem \ref{aLassoThm1}]
We shall prove the first part of the theorem since the proof of the second part follows along exactly the same lines (except for replacing $\mathcal{C}_{1,N,T}$ by $\mathcal{C}_{2,N,T}$ in the following arguments). Throughout we work on $\mathcal{A}_{N,T}\cap\mathcal{C}_{1,N,T}\cap \mathcal{D}_{N,T}\cap\cbr[0]{\enVert[0]{\hat{\beta}-\beta^*}\leq \beta_{\min}/2}\cap\cbr[0]{\enVert[0]{\hat{c}-c^*}\leq c_{\min}/2}$ and verify that (\ref{Cond1}) and (\ref{Cond2}) are valid on this set with $w=(w_1',w_2')'$ and $w_{1j}=\lambda_{N,T}/\envert[0]{\hat{\beta}_j},\ j=1,...,p$ as well as $w_{2j}=\mu_{N,T}/\envert[0]{\hat{c}_j}$, $j=1,...,N$ and the convention that $1/0 =\infty$. First note that since $S_{J,J}$ is a diagonal matrix with positive entries on the diagonal (\ref{Cond1}) is equivalent to  
\begin{align*}
\sgn\del[2]{S_{J,J}\gamma^*_J+S_{J,J}\del{Z_J'Z_J}^{-1}S_{J,J}(S_{J,J})^{-1}\sbr{Z_J'\epsilon -r_J}}=\sgn(\gamma^*_J)
\end{align*}
Focussing on an $X_j$ with $j\in J_1$ it hence suffices to show that\footnote{Here, without causing confusion, we assume that $X_j$, $j\in J_1$ is indeed the $j$th variable.} 
\begin{align*}
\envert[1]{\del[1]{S_{J,J}\del{Z_J'Z_J}^{-1}S_{J,J}(S_{J,J})^{-1}\sbr{Z_J'\epsilon -r_J}}_j}
\leq
\sqrt{NT}\beta_{\min}
\end{align*}
The left hand side in the above display may be upper bounded by\\ $\enVert[1]{S_{J,J}\del{Z_J'Z_J}^{-1}S_{J,J}}_{\ell_\infty}\enVert[1]{(S_{J,J})^{-1}\sbr{Z_J'\epsilon -r_J}}_{\ell_\infty}
$. Since 
\begin{align*}
\enVert[0]{S_{J,J}\del{Z_J'Z_J}^{-1}S_{J,J}}_{\ell_\infty}
\leq
\sqrt{|J|}\enVert[0]{S_{J,J}\del{Z_J'Z_J}^{-1}S_{J,J}}
\end{align*}
and on $\mathcal{D}_{N,T}$ one has 
\begin{align}
\enVert[1]{S_{J,J}\del{Z_J'Z_J}^{-1}S_{J,J}}
=
\phi_{\max}\del[1]{S_{J,J}\del{Z_J'Z_J}^{-1}S_{J,J}}
=
\frac{1}{\phi_{\min}(\Psi_{J,J})}
\leq
\frac{2}{\phi_{\min}(\Gamma_{J,J})}\label{inverse}
\end{align}
it follows that 
\begin{align*}
\enVert[0]{S_{J,J}\del{Z_J'Z_J}^{-1}S_{J,J}}_{\ell_\infty}
\leq
\frac{2\sqrt{|J|}}{\phi_{\min}(\Gamma_{J,J})}
\end{align*}
Furthermore, because $\enVert[0]{\hat{\beta}-\beta^*}\leq \beta_{\min}/2$ (by assumption)
\begin{align*}
|\hat{\beta}_j|\geq \beta_j^*-|\hat{\beta}_j-\beta_j^*|\geq \beta_{\min}-\enVert[0]{\hat{\beta}-\beta^*}\geq \beta_{\min}/2
\end{align*}
for all $j\in J_1$. By a similar argument $\hat{c}_j\geq c_{\min}/2$ for all $j\in J_2$. Hence,
\begin{align*}
\enVert[1]{(S_{J,J})^{-1}r_J}_{\ell_\infty}
=
\enVert[2]{\frac{\lambda_{N,T}}{\sqrt{NT}\hat{\beta}_{J_1}}}_{\ell_\infty}\vee \enVert[2]{\frac{\mu_{N,T}}{\sqrt{T}\hat{c}_{J_2}}}_{\ell_\infty}
\leq 
\frac{2\lambda_{N,T}}{\sqrt{NT}\beta_{\min}}\vee \frac{2\mu_{N,T}}{\sqrt{T}c_{\min}}
\end{align*}
Next, on $\mathcal{A}_{N,T}$
\begin{align*}
\enVert[1]{(S_{J,J})^{-1}Z_J'\epsilon}_{\ell_\infty}
\leq
\enVert{\frac{X'_{J_1}\epsilon}{\sqrt{NT}}}_{\ell_\infty}\vee\enVert{\frac{D'_{J_2}\epsilon}{\sqrt{T}}}_{\ell_\infty}
\leq
\frac{\lambda_{N,T}}{2\sqrt{NT}}\vee \frac{\mu_{N,T}}{2\sqrt{T}}
\end{align*}
It follows that
\begin{align}
\enVert[1]{(S_{J,J})^{-1}\sbr{Z_J'\epsilon -r_J}}_{\ell_\infty}
&\leq 
\enVert[1]{(S_{J,J})^{-1}Z_J'\epsilon}_{\ell_\infty}+\enVert[1]{(S_{J,J})^{-1}r_J}_{\ell_\infty}\notag\\
&\leq
\frac{\lambda_{N,T}}{2\sqrt{NT}}\vee \frac{\mu_{N,T}}{2\sqrt{T}}+\frac{2\lambda_{N,T}}{\sqrt{NT}\beta_{\min}}\vee \frac{2\mu_{N,T}}{\sqrt{T}c_{\min}}\label{infty}
\end{align}
Hence, putting the pieces together, (\ref{Cond1}) is satisfied for all $j\in J_1$ if
\begin{align*}
\frac{2\sqrt{|J|}}{\phi_{\min}(\Gamma_{J,J})}\del[3]{\frac{\lambda_{N,T}}{2\sqrt{NT}}\vee \frac{\mu_{N,T}}{2\sqrt{T}}+\frac{2\lambda_{N,T}}{\sqrt{NT}\beta_{\min}}\vee \frac{2\mu_{N,T}}{\sqrt{T}c_{\min}}}
\leq
\sqrt{NT}\beta_{\min}
\end{align*}
Next, (\ref{Cond2}) is equivalent to
\begin{align}
\envert[1]{-Z_{j}'Z_J(S_{J,J})^{-1}S_{J,J}\del{Z_J'Z_J}^{-1}S_{J,J}(S_{J,J})^{-1}\sbr{Z_J'\epsilon -r_J}+Z_{j}'\epsilon}< w_j\label{LHS2}
\end{align}
for all $j\in J^c$. The left hand side in the above display is bounded from above by
\begin{align*}
\enVert[1]{Z_{j}'Z_J(S_{J,J})^{-1}S_{J,J}\del{Z_J'Z_J}^{-1}S_{J,J}}_{\ell_1}\enVert[1]{(S_{J,J})^{-1}\sbr{Z_J'\epsilon -r_J}}_{\ell_\infty}+\envert[1]{Z_j'\epsilon}
\end{align*}
Assume again that $Z_j$ is an $X_j$. Then, on $\mathcal{C}_{1,N,T}$ and by (\ref{inverse})
\begin{align*}
&\enVert[1]{Z_{j}'Z_J(S_{J,J})^{-1}S_{J,J}\del{Z_J'Z_J}^{-1}S_{J,J}}_{\ell_1}
\leq 
\sqrt{|J|}\enVert[1]{Z_{j}'Z_J(S_{J,J})^{-1}S_{J,J}\del{Z_J'Z_J}^{-1}S_{J,J}}\\
&\leq
|J|\enVert[1]{Z_{j}'Z_J(S_{J,J})^{-1}}_{\ell_\infty}\enVert[1]{S_{J,J}\del{Z_J'Z_J}^{-1}S_{J,J}}\\
&\leq
\frac{2|J|K_{1,N,T}}{\phi_{\min}(\Gamma_{J,J})}
\end{align*}
where the second estimate follows by considering $S_{J,J}\del{Z_J'Z_J}^{-1}S_{J,J}$ as a bounded linear operator from $\ell_2(\mathbb{R}^{|J|})\to \ell_2(\mathbb{R}^{|J|})$ with induced operator norm given by $\phi_{\max}\del[1]{S_{J,J}\del{Z_J'Z_J}^{-1}S_{J,J}}$. Putting the pieces together, and using that we are on $\mathcal{A}_{N,T}$ and by (\ref{infty}) the left hand side in (\ref{LHS2}) may be upper bounded by
\begin{align*}
\frac{2|J|K_{1,N,T}}{\phi_{\min}(\Gamma_{J,J})}\del[3]{\frac{\lambda_{N,T}}{2\sqrt{NT}}\vee \frac{\mu_{N,T}}{2\sqrt{T}}+\frac{2\lambda_{N,T}}{\sqrt{NT}\beta_{\min}}\vee \frac{2\mu_{N,T}}{\sqrt{T}c_{\min}}}+\frac{\lambda_{N,T}}{2}
\end{align*}
Finally, the right hand side in (\ref{LHS2}) may be bounded from below by $\lambda_{N,T}/\enVert[0]{\hat{\beta}-\beta^*}$ and the result follows.
\end{proof}

\begin{proof}[Proof of Corollary \ref{ALcor}]
We know from Theorem \ref{aLassoThm1} that $\sgn({\tilde{\beta}})=\sgn({\beta^*})$ on $\mathcal{A}_{N,T}\cap\mathcal{B}_{N,T}\cap\mathcal{C}_{1,N,T}\cap\mathcal{D}_{N,T}\cap\cbr[0]{\enVert[0]{\hat{\beta}-\beta^*}\leq \beta_{\min}/2}\cap\cbr[0]{\enVert[0]{\hat{c}-c^*}\leq c_{\min}/2}                                                                                   $ if (\ref{albeta1})-(\ref{albeta2}) is satisfied\footnote{Actually, we know from Theorem \ref{aLassoThm1} that $\sgn({\tilde{\beta}})=\sgn({\beta^*})$ on the larger set $\mathcal{A}_{N,T}\cap\mathcal{C}_{1,N,T}\cap\mathcal{D}_{N,T}\cap\cbr[0]{\enVert[0]{\hat{\beta}-\beta^*}\leq \beta_{\min}/2}\cap\cbr[0]{\enVert[0]{\hat{c}-c^*}\leq c_{\min}/2}                                                                                   $. As will be seen, this distinction will turn out not to make any difference for our lower bounds on the probability of the events.}. Furthermore, if $\beta_{\min}\geq\frac{2\xi_{N,T}}{\sqrt{NT}}$ one has $\mathcal{A}_{N,T}\cap\mathcal{\tilde{B}}_{N,T}\cap\mathcal{C}_{1,N,T}\subseteq\mathcal{A}_{N,T}\cap\mathcal{B}_{N,T}\mathcal{C}_{1,N,T}\cap\mathcal{D}_{N,T}\cap\cbr[0]{\enVert[0]{\hat{\beta}-\beta^*}\leq \beta_{\min}/2}\cap \cbr[0]{\enVert[0]{\hat{c}-c^*}\leq c_{\min}/2} $ \footnote{The inclusion follows from the fact that $\mathcal{\tilde{B}}_{N,T}\subseteq \mathcal{B}_{N,T}\cap \mathcal{D}_{N,T}$ as argued prior to the proof of Theorem \ref{aLassoThm1}. Also, the inclusion has used that on $\mathcal{A}_{N,T}\cap \mathcal{B}_{N,T}$ one has $\enVert[0]{\hat{\beta}-\beta^*}\leq \frac{\xi_{N,T}}{\sqrt{NT}}$ and $\enVert[0]{\hat{c}-c^*}\leq \frac{\xi_{N,T}}{\sqrt{T}}$ such that $\beta_{\min}\geq\frac{2\xi_{N,T}}{\sqrt{NT}}$ and $c_{\min}\geq\frac{2\xi_{N,T}}{\sqrt{T}}$ imply that $\cbr[0]{\enVert[0]{\hat{\beta}-\beta^*}\leq \beta_{\min}/2}$ and $\cbr[0]{\enVert[0]{\hat{c}-c^*}\leq c_{\min}/2}$, respectively.}. The lower bound on the probability of $\cbr[0]{\sgn({\tilde{\beta}})=\sgn({\beta^*})}$ now follows by Lemmas \ref{AboundLp}, \ref{BboundLp} and \ref{CLp} in case of part one of the corollary. In case of part 2 of the corollary Lemmas \ref{Aboundexp}, \ref{Bboundexp} and \ref{Cexp} are used. A similar argument gives the lower bound on the probability with which $\sgn(\tilde{c})=\sgn(c^*)$ by verifying (\ref{alc1})-(\ref{alc2}). 
\end{proof}

\begin{proof}[Proof of Theorem \ref{aLassoAsym}]
We proceed by verifying the conditions for the sign consistency of $\tilde{\beta}$  and $\tilde{c}$ in part 2 of Corollary \ref{ALcor} and show that the lower bound on the probability with which $\sgn(\tilde{\beta})=\sgn(\beta^*)$ and $\sgn(\tilde{c})=\sgn(c^*)$ tends to one. We focus on $P\del[1]{\sgn(\tilde{\beta})=\sgn(\beta^*)}\to 1$ since the second part of the theorem follows by identical arguments. 

First, we verify that (\ref{albeta1}) is satisfied asymptotically. To do so it suffices to show that $\frac{\sqrt{|J|}\lambda_{N,T}}{NT}\to 0$ and $\frac{\sqrt{|J|}\mu_{N,T}}{\sqrt{N}T}\to 0$. Now, ignoring constants, and using the definition of $\lambda_{N,T}$
\begin{align*}
\frac{\sqrt{|J|}\lambda_{N,T}}{NT}
=
\frac{\sqrt{|J|}\log(p)^{3/2}\log({a_{N,T}})^{3/2}}{\sqrt{NT}}
=
N^{c/2+\frac{3}{2}b-a/2-1/2}\log{N}^{3/2}\to 0
\end{align*}
since $3b+c<1+a$. Similarly, using the definition of $\mu_{N,T}$
\begin{align*}
\frac{\sqrt{|J|}\mu_{N,T}}{\sqrt{N}T}
=
\frac{\sqrt{|J|}\log(N)^3}{\sqrt{NT}}
=
N^{c/2-a/2-1/2}\log(N)^3\to 0
\end{align*}
since $c<a+1$. Next, we verify that (\ref{albeta2}) is valid asymptotically. To do so it suffices to show that $\frac{|J|K_{1,N,T}}{\sqrt{NT}}\enVert[0]{\hat{\beta}-\beta^*}\to 0$, $\frac{|J|K_{1,N,T}\mu_{N,T}/\lambda_{N,T}}{\sqrt{T}}\enVert[0]{\hat{\beta}-\beta^*}\to 0$	and $\enVert[0]{\hat{\beta}-\beta^*}\to 0$. For this purpose, note that $K_{1,N,T}\leq A\log(1+|p|)\log(e+|J_1|)\sqrt{NT}\log(a_{N,T})$ which is of order $\log(|p|)\log(|J_1|)\sqrt{NT}\log(a_{N,T})=N^bc\log(N)^2\sqrt{NT}=N^{b+1/2+a/2}\log(N)^2$. Furthermore, $\enVert[0]{\hat{\beta}-\beta^*}\leq \xi_{N,T}/\sqrt{NT}$ on $\mathcal{A}_{N,T}\cap \mathcal{B}_{N,T}$ which we are working on in Corollary \ref{ALcor} \footnote{See the first line of the proof of Corollary \ref{ALcor}.} (where $\xi_{N,T}$ is as defined in Theorem \ref{thm3}). Hence, ignoring constants,
\begin{align}
\enVert[0]{\hat{\beta}-\beta^*}
\leq
\xi_{N,T}/\sqrt{NT}
\leq
\log(N)^3N^{\frac{3}{2}b+c/2-1/2-a/2}\to 0 \label{xiaux}
\end{align} 
since $3b+c<1+a$. Also,
\begin{align*}
\frac{|J|K_{1,N,T}}{\sqrt{NT}}\enVert[0]{\hat{\beta}-\beta^*}
&\leq
N^{c}N^{b+1/2+a/2}\log(N)^2\log(N)^3N^{\frac{3}{2}b+c/2-1/2-a/2}N^{-1/2-a/2}\\
&=
N^{\frac{5}{2}b+\frac{3}{2}c-1/2-a/2}\log(N)^5\to 0
\end{align*}
since $5b+3c<1+a$. Similarly, since $\mu_{N,T}/\lambda_{N,T}=\frac{\log(N)^{3/2}}{\sqrt{N}\log(p)^{3/2}}=\frac{\log(N)^{3/2}}{\sqrt{N}N^{\frac{3}{2}b}}$
\begin{align*}
\frac{|J|K_{1,N,T}\mu_{N,T}/\lambda_{N,T}}{\sqrt{T}}\enVert[0]{\hat{\beta}-\beta^*}
&=
N^cN^{b+1/2+a/2}\log(N)^2\frac{\log(N)^{3/2}}{\sqrt{N}N^{\frac{3}{2}b}}\log(N)^3N^{\frac{3}{2}b+c/2-1/2-a/2}N^{-a/2}\\
&=
N^{b+\frac{3}{2}c-1/2-a/2}\log(N)^{13/2}\to 0
\end{align*}
since $2b+3c<1+a$. Furthermore, $\beta_{\min}\geq 2\frac{\xi_{N,T}}{\sqrt{NT}}$ since $\frac{\xi_{N,T}}{\sqrt{NT}}\to 0$ when $3b+c<1+a$ as seen from (\ref{xiaux}) while $\beta_{\min}$ is bounded away from 0. Finally, we note that $9b+2c\leq 1$ suffices to ensure that the lower bound on the probability in part 2 of Corollary \ref{ALcor} tends to one as was already argued in the proof of Theorem \ref{LassoAsym}.
\end{proof}

\bibliographystyle{chicagoa}	
\bibliography{references}		

\begin{thebibliography}{}

\bibitem[\protect\citeauthoryear{Arellano}{Arellano}{2003}]{arellano03}
Arellano, M. (2003).
\newblock {\em Panel Data Econometrics}, Volume~1.
\newblock Oxford University Press, Oxford.


\bibitem[\protect\citeauthoryear{Barro}{Barro}{1991}]{barro91}
Barro, R.~J. (1991).
\newblock Economic growth in a cross section of countries.
\newblock {\em The Quarterly Journal of Economics\/}~{\em 106\/}(2), 407--443.


\bibitem[\protect\citeauthoryear{Belloni, Chen, Chernozhukov, and
  Hansen}{Belloni et~al.}{2012}]{belloni2012s}
Belloni, A., D.~Chen, V.~Chernozhukov, and C.~Hansen (2012).
\newblock Sparse models and methods for optimal instruments with an application
  to eminent domain.
\newblock {\em Econometrica\/}~{\em 80\/}(6), 2369--2429.


\bibitem[\protect\citeauthoryear{Belloni and Chernozhukov}{Belloni and
  Chernozhukov}{2011}]{bellonic11}
Belloni, A. and V.~Chernozhukov (2011).
\newblock High dimensional sparse econometric models: An introduction.
\newblock {\em Inverse Problems and High-Dimensional Estimation\/}, 121--156.


\bibitem[\protect\citeauthoryear{Belloni, Chernozhukov, and Hansen}{Belloni
  et~al.}{2013}]{BelloniCH13}
Belloni, A., V.~Chernozhukov, and C.~Hansen (2013).
\newblock Inference on treatment effects after selection amongst
  high-dimensional controls.
\newblock {\em The Review of Economic Studies\/}.


\bibitem[\protect\citeauthoryear{Bickel, Ritov, and Tsybakov}{Bickel
  et~al.}{2009}]{bickelrt09}
Bickel, P., Y.~Ritov, and A.~Tsybakov (2009).
\newblock Simultaneous analysis of lasso and dantzig selector.
\newblock {\em The Annals of Statistics\/}~{\em 37\/}(4), 1705--1732.


\bibitem[\protect\citeauthoryear{B{\"u}hlmann and van~de Geer}{B{\"u}hlmann and
  van~de Geer}{2011}]{buhlmannvdg11}
B{\"u}hlmann, P. and S.~van~de Geer (2011).
\newblock {\em Statistics for High-Dimensional Data: Methods, Theory and
  Applications}.
\newblock Springer-Verlag, New York.


\bibitem[\protect\citeauthoryear{Caner}{Caner}{2009}]{caner2009}
Caner, M. (2009).
\newblock Lasso-type gmm estimator.
\newblock {\em Econometric Theory\/}~{\em 25\/}(1).


\bibitem[\protect\citeauthoryear{Caner and Zhang}{Caner and
  Zhang}{2014}]{canerz2014}
Caner, M. and H.~H. Zhang (2014).
\newblock Adaptive elastic net for generalized methods of moments.
\newblock {\em Journal of Business \& Economic Statistics (forthcoming)\/}.


\bibitem[\protect\citeauthoryear{Fan, Lv, and Qi}{Fan et~al.}{2011}]{fan11}
Fan, J., J.~Lv, and L.~Qi (2011).
\newblock Sparse high dimensional models in economics.
\newblock {\em Annual review of economics\/}~{\em 3}, 291.


\bibitem[\protect\citeauthoryear{Hall and Heyde}{Hall and
  Heyde}{1980}]{hallh80}
Hall, P. and C.~Heyde (1980).
\newblock {\em Martingale limit theory and its application}, Volume 142.
\newblock Academic press New York.


\bibitem[\protect\citeauthoryear{Hitczenko}{Hitczenko}{1990}]{hitczenko90}
Hitczenko, P. (1990).
\newblock Best constants in martingale version of rosenthal's inequality.
\newblock {\em The Annals of Probability\/}, 1656--1668.


\bibitem[\protect\citeauthoryear{Hsiao}{Hsiao}{2003}]{hsiao03}
Hsiao, C. (2003).
\newblock {\em Analysis of panel data}, Volume~34.
\newblock Cambridge University Press.


\bibitem[\protect\citeauthoryear{Kock}{Kock}{2013}]{kock13}
Kock, A.~B. (2013).
\newblock Oracle efficient variable selection in random and fixed effects panel
  data models.
\newblock {\em Econometric Theory\/}~{\em 29\/}(01), 115--152.


\bibitem[\protect\citeauthoryear{Kock and Callot}{Kock and
  Callot}{2013}]{kock2013oracle}
Kock, A.~B. and L.~A. Callot (2013).
\newblock Oracle inequalities for high dimensional vector autoregressions.
\newblock {\em arXiv preprint arXiv:1311.0811\/}.


\bibitem[\protect\citeauthoryear{Lesigne and Voln{\`y}}{Lesigne and
  Voln{\`y}}{2001}]{lesignev01}
Lesigne, E. and D.~Voln{\`y} (2001).
\newblock Large deviations for martingales.
\newblock {\em Stochastic processes and their applications\/}~{\em 96\/}(1),
  143--159.


\bibitem[\protect\citeauthoryear{Meinshausen and B{\"u}hlmann}{Meinshausen and
  B{\"u}hlmann}{2006}]{meinshausenb06}
Meinshausen, N. and P.~B{\"u}hlmann (2006).
\newblock High-dimensional graphs and variable selection with the lasso.
\newblock {\em The Annals of Statistics\/}~{\em 34}, 1436--1462.


\bibitem[\protect\citeauthoryear{P{\"o}tscher and Leeb}{P{\"o}tscher and
  Leeb}{2009}]{potscherl09}
P{\"o}tscher, B.~M. and H.~Leeb (2009).
\newblock On the distribution of penalized maximum likelihood estimators: The
  lasso, scad, and thresholding.
\newblock {\em Journal of Multivariate Analysis\/}~{\em 100\/}(9), 2065--2082.


\bibitem[\protect\citeauthoryear{Raskutti, Wainwright, and Yu}{Raskutti
  et~al.}{2010}]{raskuttiwy10}
Raskutti, G., M.~J. Wainwright, and B.~Yu (2010).
\newblock Restricted eigenvalue properties for correlated gaussian designs.
\newblock {\em The Journal of Machine Learning Research\/}~{\em 11},
  2241--2259.


\bibitem[\protect\citeauthoryear{Rudelson and Zhou}{Rudelson and
  Zhou}{2011}]{rudelsonz2013}
Rudelson, M. and S.~Zhou (2011).
\newblock Reconstruction from anisotropic random measurements.


\bibitem[\protect\citeauthoryear{Stoianov}{Stoianov}{1997}]{stoianov87}
Stoianov, {\u{I}}. (1997).
\newblock {\em Counterexamples in probability}.
\newblock John Wiley \& Sons (Chichester).


\bibitem[\protect\citeauthoryear{Tibshirani}{Tibshirani}{1996}]{tibshirani96}
Tibshirani, R. (1996).
\newblock Regression shrinkage and selection via the lasso.
\newblock {\em Journal of the Royal Statistical Society. Series B
  (Methodological)\/}, 267--288.


\bibitem[\protect\citeauthoryear{van~der Vaart and Wellner}{van~der Vaart and
  Wellner}{1996}]{vdVW96}
van~der Vaart, A.~W. and J.~A. Wellner (1996).
\newblock {\em Weak convergence and empirical processes}.
\newblock Springer Verlag.


\bibitem[\protect\citeauthoryear{Vershynin}{Vershynin}{2011}]{vershynin11}
Vershynin, R. (2011).
\newblock Introduction to the non-asymptotic analysis of random matrices.
\newblock {\em arXiv preprint\/}.


\bibitem[\protect\citeauthoryear{Wainwright}{Wainwright}{2009}]{wainwright09}
Wainwright, M. (2009).
\newblock Sharp thresholds for high-dimensional and noisy sparsity recovery
  using l1-constrained quadratic programming (lasso).
\newblock {\em Information Theory, IEEE Transactions on\/}~{\em 55\/}(5),
  2183--2202.


\bibitem[\protect\citeauthoryear{Wooldridge}{Wooldridge}{2002}]{wooldridge02}
Wooldridge, J. (2002).
\newblock {\em Econometric analysis of cross section and panel data}.
\newblock The MIT press.


\bibitem[\protect\citeauthoryear{Zhao and Yu}{Zhao and Yu}{2006}]{zhaoy06}
Zhao, P. and B.~Yu (2006).
\newblock On model selection consistency of lasso.
\newblock {\em The Journal of Machine Learning Research\/}~{\em 7}, 2541--2563.


\bibitem[\protect\citeauthoryear{Zhou, van~de Geer, and B{\"u}hlmann}{Zhou
  et~al.}{2009}]{zhouvdGB09}
Zhou, S., S.~van~de Geer, and P.~B{\"u}hlmann (2009).
\newblock Adaptive lasso for high dimensional regression and gaussian graphical
  modeling.
\newblock {\em arXiv preprint arXiv:0903.2515\/}.


\bibitem[\protect\citeauthoryear{Zou}{Zou}{2006}]{zou06}
Zou, H. (2006).
\newblock The adaptive lasso and its oracle properties.
\newblock {\em Journal of the American statistical association\/}~{\em
  101\/}(476), 1418--1429.


\end{thebibliography}


\end{document}